\definecolor{yellow1}{rgb}{1,0.8,0.2}
\newtheorem{thm}{Theorem}
\newtheorem{lem}{Lemma}
\newtheorem{ass}{Assumption}
\renewcommand{\P}{\ensuremath{\mathrm{P}}} %????P
\newcommand{\Cov}{\ensuremath{\mathrm{Cov}}} %???????Cov
\newcommand{\define}{:=}%\newcommand{\define}{\stackrel{\triangle}{=}}
\newcommand{\A}{\mathbf{A}}
\newcommand{\Z}{\mathbf{B}}
\newcommand{\X}{\overline{c}}
\newcommand{\al}{\ensuremath{\alpha}}
\begin{document}
%
% paper title
% Titles are generally capitalized except for words such as a, an, and, as,
% at, but, by, for, in, nor, of, on, or, the, to and up, which are usually
% not capitalized unless they are the first or last word of the title.
% Linebreaks \\ can be used within to get better formatting as desired.
% Do not put math or special symbols in the title.
\title{Asymptotic Properties of $\mathcal{S}$-$\mathcal{AB}$ Method  with Diminishing Step-size}
%
%
% author names and IEEE memberships
% note positions of commas and nonbreaking spaces ( ~ ) LaTeX will not break
% a structure at a ~ so this keeps an author's name from being broken across
% two lines.
% use \thanks{} to gain access to the first footnote area
% a separate \thanks must be used for each paragraph as LaTeX2e's \thanks
% was not built to handle multiple paragraphs
%

\author{Shengchao Zhao,~\IEEEmembership{}
        Yongchao Liu,~\IEEEmembership{}% <-this % stops a space
\thanks{School of Mathematical Sciences, Dalian University of Technology, Dalian 116024, China, e-mail: zhaoshengchao@mail.dlut.edu.cn (Shengchao Zhao), lyc@dlut.edu.cn (Yongchao Liu) }}% <-this % stops a space
\maketitle

% As a general rule, do not put math, special symbols or citations
% in the abstract or keywords.
\begin{abstract}
The popular $\mathcal{AB}$/push-pull method for distributed optimization problem  may unify much of the existing decentralized first-order methods  based on gradient tracking technique. More recently, the stochastic gradient variant
of $\mathcal{AB}$/Push-Pull method ($\mathcal{S}$-$\mathcal{AB}$) has been proposed, which achieves  the linear  rate of converging to  a  neighborhood  of the  global minimizer when the step-size is constant.  
This paper is devoted to the  asymptotic properties of  $\mathcal{S}$-$\mathcal{AB}$ with diminishing step-size. Specifically,
under the condition that each local objective is smooth and the global objective is strongly-convex, we first present the boundedness of the iterates of $\mathcal{S}$-$\mathcal{AB}$  and then  show  that  the iterates converge to the global minimizer with the rate $\mathcal{O}\left(1/k\right)$ in the mean square sense. Furthermore, the asymptotic normality of Polyak-Ruppert averaged $\mathcal{S}$-$\mathcal{AB}$ is
obtained and applications on statistical inference are discussed. Finally,  numerical tests are  conducted to demonstrate the theoretic results.
\end{abstract}

% Note that keywords are not normally used for peerreview papers.
\begin{IEEEkeywords}
distributed stochastic optimization, $\mathcal{S}$-$\mathcal{AB}$, convergence rate, asymptotic normality
\end{IEEEkeywords}

% For peer review papers, you can put extra information on the cover
% page as needed:
% \ifCLASSOPTIONpeerreview
% \begin{center} \bfseries EDICS Category: 3-BBND \end{center}
% \fi
%
% For peerreview papers, this IEEEtran command inserts a page break and
% creates the second title. It will be ignored for other modes.
\IEEEpeerreviewmaketitle

\section{Introduction}
% The very first letter is a 2 line initial drop letter followed
% by the rest of the first word in caps.
% 
% form to use if the first word consists of a single letter:
% \IEEEPARstart{A}{demo} file is ....
% 
% form to use if you need the single drop letter followed by
% normal text (unknown if ever used by the IEEE):
% \IEEEPARstart{A}{}demo file is ....
% 
% Some journals put the first two words in caps:
% \IEEEPARstart{T}{his demo} file is ....
% 
% Here we have the typical use of a "T" for an initial drop letter
% and "HIS" in caps to complete the first word.
\IEEEPARstart{D}{istributed} optimization problem is to minimize a finite sum
of functions over a network made up of multi agents, where each agent holds the local objective function and can communicate with its neighbors.  Distributed optimization problem has many applications  in large-scale machine learning \cite{boyd2011}, wireless networks \cite{naghshineh1996distributed}, parameter estimation \cite{Towfic2015}, to name a few. Over the last decades, numerous algorithms for  distributed optimization problem have been developed. This includes (sub)gradient method \cite{pu2021sharp,bianchi2013performance,Mor2012dsa,omidvar2020hybridorder,swenson2020distributed,fallah2019robust,de2017variance}, dual averaging method \cite{Tsi2012dual,DC2012}, primal dual method \cite{Lei2018,yi2020primaldual}, gradient push method \cite{Nedic2016sGpush,pmlr-v97-assran19a,qureshi2020pushsaga}, gradient tracking method \cite{pu2020distributed,Xin2012improv,Xin_2020}.  We refer  to the survey \cite{YANG2019278} for the new development on distributed optimization.

Recently,  Xin and Khan \cite{Xin2018linear} and  Pu et al. \cite{pu2018push}  propose  the $\mathcal{AB}$/push-pull method, which is suitable for  the  cases that the communication network is directed.  
$\mathcal{AB}$/push-pull method  utilizes a  row  stochastic  matrix  for mixing the  decision variables and a  column  stochastic  matrix  for tracking the average gradients.  
Since  $\mathcal{AB}$/push-pull method may unify much of the existing decentralized first-order methods  based on gradient tracking technique \cite{RX2020}, many works have been devoted to the development of $\mathcal{AB}$/push-pull method.  Pu \cite{pu2020robust} introduces an adapted version of push-pull method which inherits the linear convergence property of push pull method under noiseless communication links and has more robust performance than push pull method under noisy information exchange.  Xiong et al. \cite{xiong2021quantized} propose a novel quantized distributed gradient tracking method to improve communication efficiency further, which can be rewritten as an inexact version of $\mathcal{AB}$/push-pull method.
Saadatniaki et al. \cite{saadatniaki2018optimization} propose a variant of $\mathcal{AB}$/push-pull method with time-varying weight matrices and show that the proposed method converges linearly to the optimal solution when each local objective is smooth and the global objective is strongly-convex. Accelerate techniques have also been incorporated into  $\mathcal{AB}$/push-pull method, Xin and Khan \cite{Xin2020heavy} employ the heavy-ball method to accelerate the  $\mathcal{AB}$/push-pull method  where the R-linear rate for strongly convex smooth objective has been obtained.  Xin et al. \cite{RX2019Nestrov} combine  nesterov gradient method with $\mathcal{AB}$/push-pull method and show that the new method achieves robust numerical performance  for both convex and strongly-convex objectives. Moreover, the extended versions of $\mathcal{AB}$/push-pull method have been used to solve  resource allocation \cite{zhang2020dual} and the distributed multi-cluster game \cite{zimmermann2021gradienttracking}.

More recently,  stochastic gradient version of $\mathcal{AB}$/push-pull method ($\mathcal{S}$-$\mathcal{AB}$) has been proposed in \cite{RX2019,pu2020distributed}.
Xin et al. \cite{RX2019} focus on the case that the step-size is constant and show that  the iterates of  $\mathcal{S}$-$\mathcal{AB}$ converge linearly  to  a  neighborhood  of  the  global minimizer when agents' cost functions are smooth and
strongly-convex.
Pu and Nedi\'{c} \cite{pu2020distributed} study the convergence properties of distributed stochastic gradient tracking method (DSGT), which can be regarded as a special $\mathcal{S}$-$\mathcal{AB}$ with doubly stochastic weight matrix. The authors show that  DSGT  converges to the global optimal solution with the optimal rate $\mathcal{O}(1/k)$ when the stepsize diminishes to zero.  
Motivated by \cite{RX2019,pu2020distributed}, this paper is devoted to developing $\mathcal{S}$-$\mathcal{AB}$ by  studying 
the convergence properties when the step-size is
diminishing.
We focus on the convergence of the iterate of $\mathcal{S}$-$\mathcal{AB}$ in the
mean square sense and its asymptotic normality. As far as we
are concerned, the contributions of the paper can be summarized as follows.
\begin{itemize}
	\item [$\bullet$] The optimal   convergence rate of $\mathcal{S}$-$\mathcal{AB}$ in the mean square sense.   We show that the iterates of $\mathcal{S}$-$\mathcal{AB}$ converge to the global optimal solution with the rate $\mathcal{O}(1/k)$ when each local objective is smooth and the global objective is strongly-convex, which arrives at the optimal rate of vanilla stochastic gradient decent algorithm with diminishing stepsize.
	Indeed,  the convergence of stochastic approximation (SA) based methods for distributed  stochastic optimization problem with diminishing step-size have been well studied \cite{bianchi2013performance,Lei2018,pu2020distributed,Nedic2016sGpush}, where most of the results are based on the fact that the  stochastic gradient noise  is a  martingale difference  sequence. Note that 
	agents' accumulated stochastic gradient noises during gradient tracking steps
	form the autoregressive moving average processes \cite{chen2006stochastic}. The challenge for $\mathcal{S}$-$\mathcal{AB}$ is that
	the weighted average of the accumulated stochastic gradient noises is not a martingale difference sequence as the weight matrices are not doubly
	stochastic.
	
	%To the best of our knowledge, most of prior works capture the convergence rate of distributed stochastic gradient descent based algorithms relying on the double stochasticity of weight matrices and the martingale difference property of stochastic gradient noise. However, it is difficulty to show that $\mathcal{S}$-$\mathcal{AB}$ converges to the global optimal solution exactly, since the weight matrices are non-double stochastic and the stochastic noise accumulated during gradient tracking process has no the martingale difference property. %In this paper, we obtain the convergence rate of $\mathcal{S}$-$\mathcal{AB}$ without requiring   the weight matrices being doubly stochastic and the variance on the gradient noise decreasing.

	\item [$\bullet$] The asymptotic normality of $\mathcal{S}$-$\mathcal{AB}$. We present that Polyak-Ruppert averaged $\mathcal{S}$-$\mathcal{AB}$ converges in distribution to a
	normal random vector for any agent
	by combining the convergence rate of $\mathcal{S}$-$\mathcal{AB}$ and  results on asymptotic normality of
	SA based algorithm \cite[Theorem 3.4.2]{chen2006stochastic}.
	The asymptotic normality  of   stochastic approximation based algorithms can be traced back to 1950s \cite{chung1954stochastic,fabian1968asymptotic}.
	For the asymptotic normality on distributed stochastic optimization problem, we refer to \cite{bianchi2013performance,Mor2012dsa,Lei2018} for unconstrained problems and \cite{Sahu2016,zhao2020asymptotic} for constrained problems.
	Complement to the works mentioned above, the new result does not need  the communication network to be undirected or weight matrix to be doubly
	stochastic.
	\item [$\bullet$] The estimator of the covariance matrix. A series of works \cite{chen2020statistical, hsieh2002confidence, Hao2021sta} have focused on   estimating the covariance matrix in the limit normal distribution of vanilla stochastic gradient decent algorithm. We extend the plug-in method \cite{chen2020statistical}   to estimate the  covariance matrix for the normal distribution of $\mathcal{S}$-$\mathcal{AB}$, which paves the way for doing statistical inference.
	Moreover,  numerical experiments are provided to support our theoretical analysis.
	
	%
	%  The extended plug-in method can be used to estimate the covariance matrix of the limited normal distribution and further to construct the confidence region of the global optimal solution.}
\end{itemize}

The rest of this paper is organized as follows. Section \ref{mod-ass} introduces the distributed stochastic optimization problem model, the $\mathcal{S}$-$\mathcal{AB}$ method and presents some standard assumptions in distributed stochastic optimization problem. Section \ref{conver-rate} focuses on the convergence rate and asymptotic normality of the $\mathcal{S}$-$\mathcal{AB}$ method.  At last,
numerical results are presented in Section \ref{num-exm}
to validate the theoretic results.

Throughout this paper, we use the following notations. $\mathbb{R}^d$ denotes  the d-dimension Euclidean space endowed with norm $\|x\|=\sqrt{\langle x,x\rangle}$.  Denote $\mathbf{1}:=(1~1\dots1)^\intercal$, $\mathbf{0}\define(0~0\dots0)^\intercal$ and whose dimension will be clear in the context.
$\mathbf{I}_{d}\in\mathbb{R}^{d\times d}$ stands for the identity matrix.
$\mathbf{A}\otimes \mathbf{B}$ denotes the Kronecker product of matrices $\mathbf{A}$ and $\mathbf{B}$.
For a sequence of random vectors $\{\xi_k\}$ and a random vector $\xi$, $\xi_k\stackrel{d}{\rightarrow} \xi$ represents the convergence in distribution, \Cov($\xi$) denotes the covariance matrix of random vector $\xi$. $N\left(\mu,\Sigma\right)$ is the normal distribution with mean $\mu$ and covariance matrix $\Sigma$. For any sequences $\{a_k\}$ and $\{b_k\}$ of positive number,  $a_k=\mathcal{O}(b_k)$ if there exists $c>0$ such that $a_k\le c b_k$.

\section{Problem formulation and $\mathcal{S}$-$\mathcal{AB}$ method}\label{mod-ass}

In this paper, we consider the following distributed stochastic optimization  (DSO) problem
\begin{equation}
\begin{aligned}\label{problem model}
&\min_{x\in\mathbb{R}^d} ~f(x)=\sum_{j=1}^nf_j(x),
\end{aligned}
\end{equation}
where $f_j(x):=\mathbb{E}[g_j(x;\zeta_j)]$ denotes the cost  of agent $j$, $g_j(x;\zeta_j):\mathbb{R}^d\times \Omega\rightarrow \mathbb{R}$ is the measure function, $\zeta_j$ is a random variable defined on a probability space $(\Omega,\mathcal{F},\P)$ and $\mathbb{E}[\cdot]$ denotes the expectation with respect to probability $\P$. For DSO (\ref{problem model}),
the communication relationship between agents is characterized by a directed graph, $\mathcal{G}=\left(\mathcal{V},\mathcal{E}\right)$, where $\mathcal{V}=\{1,2,...,n\}$ is the node set with node $i$ representing agent $i$ and $\mathcal{E}\subseteq\mathcal{V}\times\mathcal{V}$ denotes the edge set with edge $(j,i)$ representing agent $i$ can receive information from agent $j$.

The  $\mathcal{S}$-$\mathcal{AB}$ method in \cite{RX2019} reads as follows.
\begin{algorithm}[H]
	\caption{$\mathcal{S}$-$\mathcal{AB}$: At each node $i\in\mathcal{V}=\{1,2,...,n\}$}\label{alg:SAB}
	\textbf{Require:} initial value $x_{i,0}\in\mathbb{R}^{d}$, $y_{i,0}=\nabla g_i(x_{i,0},\zeta_{i,0})$, weight matrices $\mathbf{A}=\{a_{ij}\}$ and $\mathbf{B}=\{b_{ij}\}$.%; step-size $\alpha_{k}=\frac{b}{k^a+k_0}$ with $a\in (1/2,1]$ and $b,k_0>0$; weight matrices .
	\begin{itemize}
		\item[1:]\textbf{For} $k=1,2,\cdots$ \textbf{do}
		\item [2:]\textbf{State update:}
		\begin{equation}\label{alg:x}
		x_{i,k+1}=\sum_{j=1}^n a_{ij} x_{j,k}-\alpha_k y_{i,k}.
		\end{equation}
		\item[3:]\textbf{Gradient tracking update:}
		$$
		 y_{i,k+1}=\sum_{j=1}^n b_{ij} y_{j,k}+\nabla g_i(x_{i,k+1},\zeta_{i,k+1})-\nabla g_i(x_{i,k},\zeta_{i,k}),
		$$
		where $\zeta_{i,0},\zeta_{i,1},\cdots$  is independent and identically distributed sample of $\zeta_{i}$.
		\item[4:]\textbf{end for}
	\end{itemize}
\end{algorithm}

Throughout our analysis in the paper, we make the following two assumptions on the objective function and weight matrices $\A$ and $\Z$. 

\begin{ass}[\textbf{objective function}]\label{ass:objective}
	\begin{itemize}
		\item [(i)]Function $f(x)$ is differentiable and $\mu$-strongly convex in $x$, that is
		\begin{equation*}
		f(y)\ge f(x)+\langle \nabla f(x),y-x\rangle+\frac{\mu}{2}\|x-y\|^2, \quad \forall x,y\in\mathbb{R}^d.
		\end{equation*}
		\item [(ii)] Let $x^*$ be an optimal solution. There exists a constant $c>0$ such that
	\begin{equation*}
		\|\nabla f(x)-\nabla^2 f(x^*)\left(x-x^*\right)\|\le c\|x-x^*\|^2,\quad \forall x\in\mathbb{R}^n.
		\end{equation*}
		\item [(iii)] For any $i\in\mathcal{V}$, function $g_i(x,\zeta_{i})$ is differentiable and there exist constants $p>2$, $U_1>0$ such that
		\begin{equation*}
		\begin{aligned}
		&\mathbb{E}\left[\|\nabla g_i(x;\zeta_{i})-\nabla f_i(x)\|^{p}\right]\le U_1^{p/2},\\
		&\|\nabla g_i(x;\zeta_i)-\nabla g_i(y;\zeta_i)\|\le L_i(\zeta_i)\|x-y\|, \quad \forall x,y\in\mathbb{R}^d,
		\end{aligned}
		\end{equation*}
		where $L_i(\zeta_i)$ is a positive random variable  and  satisfies $\mathbb{E}[L_i^p(\zeta_i)]<\infty$.
	\end{itemize}
	
\end{ass}

 Assumption \ref{ass:objective} (i)- (ii) are standard conditions on objective  and have been well used to study the  asymptotic properties of SA based method \cite{duchi2016asymptotic,Polyak1992}. 
In Assumption \ref{ass:objective} (iii),  the constant $p=2$ is sufficient to investigate the convergence rate of $\mathcal{S}$-$\mathcal{AB}$ as \cite{RX2019,RX2020},  while   $p>2$ is needed for   the  asymptotic normality of $\mathcal{S}$-$\mathcal{AB}$. Define $L:=\max_{1\leq i \leq n}\sqrt{\mathbb{E}[L_{i}^2(\zeta_i)]}$. Then for any $x,y\in\mathbb{R}^d$, we have 
\begin{align}\label{ie-1}
\|\nabla f_i(x)-\nabla f_i(y)\|^2
%&=\|\mathbb{E}[\nabla g_i(x,\zeta_i)]-\mathbb{E}[\nabla g_i(y,\zeta_i)]\|^2\notag\\
%&\le \mathbb{E}[\|\nabla g_i(x,\zeta_i)-\nabla g_i(y,\zeta_i)\|^2]\notag\\
&\le L^2\|x-y\|^2
\end{align} 
and 
{\small	\begin{align}\label{ie-2}
	\left\|\frac{1}{n}\sum_{j=1}^n\nabla f_j(x_i)-\frac{1}{n}\nabla f(y)\right\|^2&=\left\|\frac{1}{n}\sum_{j=1}^n\left(\nabla f_j(x_i)-\nabla f_j(y)\right)\right\|^2\notag\\
	&\le \frac{1}{n}\sum_{j=1}^n\left\|\nabla f_j(x_i)-\nabla f_j(y)\right\|^2\notag\\
	&\le \frac{L^2}{n}\sum_{j=1}^n\|x_i-y\|^2.
	\end{align}}

\begin{ass}  [\textbf{weight matrices and networks}]\label{ass:matrix}
	Let $\mathcal{G}_A=\left(\mathcal{V},\mathcal{E}_\A\right)$ and $\mathcal{G}_{B^\intercal}=\left(\mathcal{V},\mathcal{E}_{\Z^\intercal}\right)$ be graphs induced by  matrices $\mathbf{A}$ and $\mathbf{B}^\intercal$ respectively.  Suppose that  %where $(j,i)\in \mathcal{E}_\A$ and $(j,i)\in \mathcal{E}_{\Z^\intercal}$ only if $a_{ij}>0$ and $b_{ji}>0$ respectively.
	\begin{itemize}
		\item [(i)]The matrix $\mathbf{A}\in\mathbb{R}^{n\times n}$ is nonnegative row stochastic and $\mathbf{B}\in\mathbb{R}^{n\times n}$ is nonnegative column stochastic, i.e., $\mathbf{A}\mathbf{1}=\mathbf{1}$ and $\mathbf{1}^\intercal\mathbf{B}=\mathbf{1}^\intercal$. In addition, the diagonal entries of $\mathbf{A}$ and $\mathbf{B}$ are positive, i.e., $\mathbf{A}_{ii}>0$ and $\mathbf{B}_{ii}>0$ for all $i\in \mathcal{V}$.
		\item[(ii)]  The graphs $\mathcal{G}_A$ and $\mathcal{G}_{B^\intercal}$ each contain at least one spanning tree. Moreover, there exists at least one node that is a root of spanning trees for both $\mathcal{G}_A$ and $\mathcal{G}_{B^\intercal}$, i.e. $\mathcal{R}_A\cap \mathcal{R}_{B^\intercal}\ne\emptyset$, where $\mathcal{R}_A$ ( $\mathcal{R}_{B^\intercal}$) is the set of roots of all possible spanning trees in the graph $\mathcal{G}_A$ ( $\mathcal{G}_{B^\intercal}$).
	\end{itemize}
\end{ass}

%{\color{blue}Building on consensus with  non-doubly
%	stochastic matrices with a common root for the spanning trees, the scheme of $\mathcal{AB}$/push-pull method can unify the algorithms with different types of distributed architecture, including decentralized
%	(peer-to-peer), centralized (master-slave), and semi-centralized (leader-follower) architecture \cite{pu2020push,RX2020}}.
As commented in \cite{pu2020push}, Assumption \ref{ass:matrix} is weaker than requiring that both $\mathcal{G}_A$ and $\mathcal{G}_{B^\intercal}$
are strongly connected.
Building on consensus with  non-doubly stochastic matrices, the scheme of $\mathcal{AB}$/push-pull method has been applied on reinforce learning \cite{REN2021RL} and economic dispatch problem \cite{WU2022economic}. Under  Assumption \ref{ass:matrix}, the matrix $\mathbf{A}$ has a nonnegative left eigenvector $u^\intercal$ (w.r.t. eigenvalue 1) with $u^\intercal\mathbf{1}=n$, and the matrix $\mathbf{B}$ has a nonnegative right eigenvector $v$ (w.r.t. eigenvalue 1) with $\mathbf{1}^Tv=n$. Moreover, $u^\intercal v>0$.

	For easy of presentation, we rewrite Algorithm \ref{alg:SAB} in a compact form:
	\begin{equation}\label{alg:new form}
	\begin{aligned}
	&x_{k+1}=\tilde{\mathbf{A}}x_k-\alpha_ky_k,\\
	&y_{k+1}=\tilde{\mathbf{B}}y_k+\nabla G_{k+1}-\nabla G_k,
	\end{aligned}
	\end{equation}
	where $\tilde{\mathbf{A}}\define\mathbf{A}\otimes \mathbf{I}_d,~ \tilde{\mathbf{B}}\define\mathbf{B}\otimes \mathbf{I}_d$, the vectors $x_k$, $y_k$ and $\nabla G_k$ concatenate all $x_{i,k}$'s, $y_{i,k}$'s and $\nabla g_i(x_{i,k},\zeta_{i,k})$'s respectively.

\section{The convergence rate and asymptotic normality}\label{conver-rate}
In this section, we study the convergence  rate and asymptotic normality of $\mathcal{S}$-$\mathcal{AB}$ method. %We first show the convergence rate of $\mathcal{S}$-$\mathcal{AB}$.
As commented in the introduction, the convergence rate of $\mathcal{S}$-$\mathcal{AB}$ has been discussed in \cite{RX2019,pu2020distributed}. We focus on the rate of $x_{i,k}\to x^*$ rather than  $x_{i,k}$ converging into
a neighbor of $x^*$ \cite{RX2019}. Moreover, different from \cite{pu2020distributed},  the weighted average of the accumulated noises among agents can not form the martingale difference sequence as weight matrices $\mathbf{A}$ and $\mathbf{B}$ are not doubly stochastic.

We first study the    convergence rate  of iterates of $\mathcal{S}$-$\mathcal{AB}$ in the mean square sense. The following technical Lemma is  an extension of   \cite[Lemma 3]{bianchi2013performance}, which plays a key role in studying the
stability and agreement of  $\mathcal{S}$-$\mathcal{AB}$ method.

\begin{lem}\label{lem:ieq-1}
	Suppose that positive sequences $\{\gamma_{k}\}$,  $\{\rho_{k}\}$,  $\{\phi_{k}\}$ satisfy
	\begin{itemize}
		\item[(i)]$\{\gamma_{k}\}$,  $\{\rho_{k}\}$ are $[0,1]$-valued sequences such
		that $\sum_{k=0}^\infty\gamma_{k}^2<\infty$, $\limsup_{k\rightarrow\infty}\frac{\gamma_{k}}{\gamma_{k+1}}=1$.
		\item[(ii)]	
		\begin{align*}
			&\limsup_{k\rightarrow\infty}\left(\gamma_{k}\sqrt{\phi_k}+\frac{\phi_{k-1}}{\phi_k}\right)<\infty,\quad \sum_{k=0}^\infty\phi_k^{-1}<\infty,\\
			&\liminf_{k\rightarrow\infty}\left(\gamma_{k}\sqrt{\phi_k}\right)^{-1}\left(\frac{\phi_{k-1}}{\phi_k}-\rho_k\right)>0.
			\end{align*}
	\end{itemize}
	If for scalars $M>0$ and $1>\rho>0$, sequences $\{u_k\},~\{v_k\}$ satisfy that for $k\ge k_0$,
	\begin{align}
	\label{form-1}u_{k+1}&\le\rho_ku_k+M\gamma_{k}\sqrt{u_k(1+u_k+v_k)}\\
	&\quad+M\gamma_{k}\left(\gamma_{k}+\sum_{t=k_0}^k\gamma_{t}\rho^{k-t}u_t+\sum_{t=k_0}^k\gamma_{t}\rho^{k-t}v_t\right),\notag\\
	\label{form-2}v_{k+1}&\le v_k+Mu_k+M\gamma_{k}\sqrt{u_k(1+u_k+v_k)}\\
	&+M\gamma_{k}\left(\gamma_{k}+\sum_{t=k_0}^k\gamma_{t}\rho^{k-t}u_t+\sum_{t=k_0}^k\gamma_{t}\rho^{k-t}v_t\right),\notag
	\end{align}
	then
	\begin{equation*}
	\sup_kv_k<\infty,\quad \limsup_{k\rightarrow\infty}\phi_ku_k<\infty.
	\end{equation*}
\end{lem}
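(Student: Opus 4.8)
The plan is to decouple the two coupled recursions \eqref{form-1}--\eqref{form-2} by \emph{normalising} $u_k$ through the running maximum of $1+u_t+v_t$, to obtain the decay rate of this normalised quantity by a Chung-type argument that does \emph{not} use any a priori bound for $v_k$, and then to feed that rate back into the recursion for $v_k$ to close the boundedness. I would start by recording the consequences of (i)--(ii) used throughout: $\phi_k\to\infty$ and $\phi_{k-1}/\phi_k$ stays bounded; $\gamma_k\sqrt{\phi_k}$ is bounded, hence $\gamma_k\to0$; there are $c_0>0$ and an index $k_1\ge k_0$ with $1-\rho_k\ge c_0\gamma_k\sqrt{\phi_k}$ for $k\ge k_1$, so that $\gamma_k/(1-\rho_k)\to0$ and, by Cauchy--Schwarz together with $\sum_k\phi_k^{-1}<\infty$, $\sum_k\gamma_k^2/(1-\rho_k)<\infty$; and, using $\limsup_k\gamma_k/\gamma_{k+1}=1$, a window-comparability estimate: for $\varepsilon$ small enough that $(1+\varepsilon)\rho<1$ one has $\gamma_t\le(1+\varepsilon)^{k-t}\gamma_k$ for all large $t\le k$, hence $\sum_{t=k_0}^{k}\gamma_t\rho^{k-t}\le C_1\gamma_k$ and $\sqrt{\phi_k}\sum_{t=k_0}^{k}\gamma_t\rho^{k-t}$ is bounded. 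This last estimate is the key observation that the autoregressive ``memory'' terms in \eqref{form-1}--\eqref{form-2}, though not martingale differences, enter only at order $\gamma_k$ times the current magnitude of the sequences, which is what lets the treatment parallel the martingale case of \cite{bianchi2013performance}.

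Next, put $V_k:=\max_{k_1\le t\le k}(1+u_t+v_t)$, a nondecreasing sequence with $V_k\ge1$, and $r_k:=u_k/V_k\in[0,1]$. Dividing \eqref{form-1} by $V_{k+1}\ge1$ and using $1+u_k+v_k\le V_k$, $u_t\le V_k$ and $v_t\le V_k$ for $k_1\le t\le k$, $V_k/V_{k+1}\le1$, and the window bound on the memory terms, I would obtain, for $k\ge k_1$,
\begin{equation*}
r_{k+1}\le\rho_k r_k+M\gamma_k\sqrt{r_k}+b\gamma_k^2,
\end{equation*}
with $b$ an \emph{absolute} constant depending only on $M$ and $C_1$. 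Multiplying by $\phi_{k+1}$ and writing $z_k:=\phi_k r_k$, the third condition in (ii) (which keeps $\rho_k$ below $\phi_{k-1}/\phi_k$ by at least $c_0\gamma_k\sqrt{\phi_k}$), together with the boundedness of $\phi_{k+1}/\phi_k$, yields for large $k$ a recursion
\begin{equation*}
z_{k+1}\le z_k+a_k\big(-z_k+\beta\sqrt{z_k}+\kappa\big),\qquad a_k:=1-\tfrac{\phi_{k+1}}{\phi_k}\rho_k\in(0,1],
\end{equation*}
where $\beta,\kappa$ are finite because the coefficient of $\sqrt{z_k}$ and the constant term, each divided by $a_k$, reduce to bounded multiples of $\phi_{k+1}/\phi_k$ and of $\gamma_k\sqrt{\phi_k}$. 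Since $z\mapsto-z+\beta\sqrt{z}+\kappa<0$ for $z>z^\ast:=\big((\beta+\sqrt{\beta^2+4\kappa})/2\big)^2$, the sequence $z_k$ strictly decreases whenever it exceeds $z^\ast$ and otherwise cannot increase by more than $\beta\sqrt{z^\ast}+\kappa$; hence $\limsup_k\phi_k r_k<\infty$. In particular $r_k\le Z\phi_k^{-1}$ eventually, so $\sum_k r_k<\infty$ and, by Cauchy--Schwarz, $\sum_k\gamma_k\sqrt{r_k}<\infty$.

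Finally, adding \eqref{form-1} and \eqref{form-2} and using $u_k=r_kV_k$, $\sqrt{u_k(1+u_k+v_k)}\le\sqrt{r_k}\,V_k$, $1+v_k\le V_k$, $V_k\ge1$, and the window bound on the memory terms, I would get, for $k\ge k_1$,
\begin{equation*}
1+u_{k+1}+v_{k+1}\le V_k\big(1+\eta_k\big),\qquad \eta_k:=(1+M)r_k+2M\gamma_k\sqrt{r_k}+(4C_1+2)M\gamma_k^2\ge0 .
\end{equation*}
Since $V_{k+1}=\max(V_k,1+u_{k+1}+v_{k+1})$, this gives $V_{k+1}\le V_k(1+\eta_k)$, whence $\sup_k V_k\le V_{k_1}\exp\big(\sum_{k\ge k_1}\eta_k\big)<\infty$, because $\sum_k\eta_k<\infty$ by the previous step and $\sum_k\gamma_k^2<\infty$. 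Therefore $\sup_k v_k<\infty$, the first assertion; and since $\phi_k u_k=\phi_k r_k\,V_k\le(\sup_j V_j)\,\phi_k r_k$, the bound $\limsup_k\phi_k r_k<\infty$ gives $\limsup_k\phi_k u_k<\infty$, the second assertion.

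\textbf{Main obstacle.} The place where I expect to spend the most care is the very first move: one must identify a quantity whose recursion decouples from $v_k$ with \emph{absolute} constants, so that its decay rate can be established \emph{before} and independently of the boundedness of $v_k$. A naive running-maximum/Gr\"onwall attack fails here, because when one substitutes an a priori bound $v_k\le V$ the error constants produced grow linearly in $V$ and the resulting multiplicative factor $\exp(cV)$ is inconsistent with $V$ being that bound; normalising by the running maximum $V_k$ is the device that circumvents this. A second, genuinely new technical point compared with \cite{bianchi2013performance,pu2020distributed} is the control of the autoregressive memory terms $\sum_t\gamma_t\rho^{k-t}u_t$ and $\sum_t\gamma_t\rho^{k-t}v_t$: showing that they enter only at order $\gamma_k$ relative to the running magnitude of the sequences, which is exactly where the hypothesis $\limsup_k\gamma_k/\gamma_{k+1}=1$ is used.
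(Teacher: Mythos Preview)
Your proposal is correct and follows the same three–step skeleton as the paper's proof: (1) normalise $u_k$ by a running aggregate of the coupled system so that the resulting recursion has \emph{absolute} constants, (2) use condition~(ii) to show $\phi_k$ times the normalised quantity is bounded, (3) feed summability of the normalised quantity into a Gr\"onwall product to bound the aggregate, and hence $v_k$. The paper carries this out by first passing to majorising sequences $a_k,b_k$ with equality and using the cumulative sum $A_k=(M+1)\sum_{t\le k}a_t$ as the normaliser (then $c_k=\phi_k a_k/A_k$), whereas you work directly with $u_k,v_k$ and use the running maximum $V_k=\max_{t\le k}(1+u_t+v_t)$; your Chung-type comparison $z_{k+1}\le z_k+a_k(-z_k+\beta\sqrt{z_k}+\kappa)$ is exactly the content of the paper's induction on $c_k$ with the constant $A$ chosen from~(\ref{A}). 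Both proofs reduce the autoregressive memory terms via the same window estimate $\sum_{t}\gamma_t\rho^{k-t}\le c_\rho\gamma_k$ (your ``window bound'' is Lemma~\ref{lem:weighted-seq}). Your version is marginally more streamlined in that it avoids the auxiliary majorising sequences; the paper's version makes the induction constant explicit.
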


With Lemma \ref{lem:ieq-1} at hand, we are  ready for
presenting the stability and agreement of $\mathcal{S}$-$\mathcal{AB}$. To facilitate analysis, we define two auxiliary sequences $\{y_k^{'}\}$ and $\{\xi_{k}\}$ as
\begin{align}
\label{real-y}&y_{k+1}^{'}=\tilde{\mathbf{B}}y_k^{'}+\nabla F_{k+1}-\nabla F_k,\quad y_{0}^{'}=\nabla F_{0},\\
\label{auxi-seq}&\xi_{k+1}=\tilde{\mathbf{B}}\xi_k+\epsilon_{k+1}-\epsilon_k,\quad \xi_{0}=\epsilon_{0},
\end{align}
where vectors $\nabla F_k$ and $\epsilon_k$ concatenates all $\nabla f_i(x_{i,k})$'s and $\epsilon_{i,k}$'s respectively, $\epsilon_{i,k}\define\nabla g_i(x_{i,k};\zeta_{i,k})-\nabla f_i(x_{i,k})$. %Specifically, auxiliary sequence $\{y_k^{'}\}$ is a counterpart of stochastic gradient trackers $\{y_k\}$ and is obtained by the deterministic gradients, while  auxiliary sequence $\{\xi_{k}\}$ collects the stochastic noise accumulated during gradient tracking. Note that $y_k=y_k^{'}+\xi_{k}$ and that the autoregressive moving average (ARMA) $\xi_{k}$ is difficulty to handle directly. Following lemma gives the stability and agreement of $\mathcal{S}$-$\mathcal{AB}$ by  focusing on $x_k$ and $y_k^{'}$ rather than $x_k$ and $y_k$. Our strategy is (a) give the recursive inequality of the optimal gap $\mathbb{E}\left[\|\bar{x}_{k}-x^*\|^2\right]$ based on the strongly convexity of objective function and by quantifying how the accumulated stochastic noise $\xi_k$ effects the optimal gap, (b) give the recursive inequalities of agreement errors $\left[\|x_{k}-\mathbf{1}\otimes\bar{x}_{k}\|^2\right]$ and $\mathbb{E}\left[\|y_{k}^{'}-v\otimes\bar{y}_{k}^{'}\|^2\right]$ based on the contractility of weighted matrix, (c) the coupled recursive inequalities of optimal gap and agreement errors fall in the form of (\ref{form-1})-(\ref{form-2}) such that we arrive the the stability and agreement of $\mathcal{S}$-$\mathcal{AB}$ based on Lemma \ref{lem:ieq-1}.

\begin{lem}\label{lem:rate}
	Suppose that Assumptions \ref{ass:objective} and \ref{ass:matrix} hold. Let step-size $\alpha_{k}=a/(k+b)^\alpha$, where $\alpha\in (1/2,1]$, positive scalars $a,b$ satisfy  $\frac{a}{b^\alpha}\le \min\{1,\frac{n}{u^\intercal vL}\}$, $L\define\max_{1\leq i \leq n}\sqrt{\mathbb{E}[L_{i}^2(\zeta_i)]}$, where $u^\intercal$ and $v$ are the left eigenvector of $\mathbf{A}$ and the right eigenvector of $\mathbf{B}$ respectively (see Part \ref{mod-ass} for the definitions).  Then there exists a positive constant $U_2$ such that
	\begin{equation*}
	\sup_{k}\mathbb{E}\left[\|\bar{x}_{k}-x^*\|^2\right]\le U_2,\quad
	\mathbb{E}\left[\|x_{k}-\mathbf{1}\otimes\bar{x}_{k}\|^2\right]\le U_2\alpha_k^2
	\end{equation*}
	and
	\begin{equation*}
	\mathbb{E}\left[\|y_{k}^{'}-v\otimes\bar{y}_{k}^{'}\|^2\right]\le U_2\alpha_k^2,
	\end{equation*}
	where
	\begin{equation}\label{average}
	\bar{x}_k=\left(\frac{u^\intercal}{n}\otimes\mathbf{I}_{d}\right)x_k,\quad \bar{y}_k^{'}\define\left(\frac{\mathbf{1}^\intercal}{n}\otimes\mathbf{I}_{d}\right)y_k^{'}.
	\end{equation}
\end{lem}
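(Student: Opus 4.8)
The plan is to package the three quantities into two coupled nonnegative sequences matching (\ref{form-1})--(\ref{form-2}) and then apply Lemma~\ref{lem:ieq-1}. First I would fix the decompositions and contraction constants. Because $u^{\intercal}\mathbf{A}=u^{\intercal}$ and $\mathbf{1}^{\intercal}\mathbf{B}=\mathbf{1}^{\intercal}$, the residual $x_k-\mathbf{1}\otimes\bar x_k$ lies in the $\tilde{\mathbf{A}}$-invariant subspace $\ker\!\big(u^{\intercal}\otimes\mathbf{I}_d\big)$, on which, by Assumption~\ref{ass:matrix}, there are a norm and a constant $\rho_A\in(0,1)$ with $\|\tilde{\mathbf{A}}z\|\le\rho_A\|z\|$; similarly, with $\Pi:=\mathbf{I}_{nd}-\big((v\mathbf{1}^{\intercal}/n)\otimes\mathbf{I}_d\big)$ --- which commutes with $\tilde{\mathbf{B}}$ since $\mathbf{B}v=v$, $\mathbf{1}^{\intercal}\mathbf{B}=\mathbf{1}^{\intercal}$ --- there is $\rho_B\in(0,1)$ with $\|\tilde{\mathbf{B}}\Pi z\|\le\rho_B\|\Pi z\|$. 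Passing to $\|\cdot\|$ by norm equivalence I set $\rho:=\max\{\rho_A,\rho_B\}$, the $\rho$ appearing in (\ref{form-1})--(\ref{form-2}). I would also record $y_k=y_k'+\xi_k$, $\bar y_k'=\tfrac1n\sum_i\nabla f_i(x_{i,k})$ and $\bar\xi_k=\tfrac1n\sum_i\epsilon_{i,k}$ (from the initializations and column stochasticity), and the measurability facts that $x_k$, hence $\bar x_k$, is $\mathcal{F}_{k-1}:=\sigma(\zeta_{i,t}:i\in\mathcal{V},\,t\le k-1)$-measurable while $\mathbb{E}[\epsilon_{i,k}\mid\mathcal{F}_{k-1}]=0$.

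Next I would derive the one-step inequalities. By Assumption~\ref{ass:objective}(iii) with $p\ge2$, $\mathbb{E}\|\epsilon_{i,k}\|^2\le U_1$, so $\mathbb{E}\|\epsilon_{k+1}-\epsilon_k\|^2$ and $\mathbb{E}\|\bar\xi_k\|^2$ are bounded, and projecting (\ref{auxi-seq}) by $\Pi$, using the $\tilde{\mathbf{B}}$-contraction with $\|a+b\|^2\le(1+\theta)\|a\|^2+(1+\theta^{-1})\|b\|^2$ ($\theta$ small), gives $\sup_k\mathbb{E}\|\xi_k-v\otimes\bar\xi_k\|^2<\infty$. With $v_k:=\mathbb{E}\|\bar x_k-x^*\|^2$, from $\bar x_{k+1}-x^*=\bar x_k-x^*-\alpha_k(u^{\intercal}/n\otimes\mathbf{I}_d)(y_k'+\xi_k)$ I would split off the mean-gradient part, use (\ref{ie-2}) to replace $\tfrac{u^{\intercal}v}{n}\bar y_k'$ by $\tfrac{u^{\intercal}v}{n^2}\nabla f(\bar x_k)$ up to an error of size $C\|x_k-\mathbf{1}\otimes\bar x_k\|$, and invoke the standard smoothness-and-strong-convexity inequality ($f$ being $nL$-smooth by (\ref{ie-1}) and $\mu$-strongly convex) together with $\tfrac{a}{b^{\alpha}}\le\min\{1,\tfrac{n}{u^{\intercal}vL}\}$ to get $\|\bar x_k-x^*-\tfrac{\alpha_k u^{\intercal}v}{n^2}\nabla f(\bar x_k)\|^2\le(1-c_1\alpha_k)\|\bar x_k-x^*\|^2$; Young's and Cauchy--Schwarz inequalities on the remaining deterministic cross terms then give $v_{k+1}\le v_k+C\,\mathbb{E}\|x_k-\mathbf{1}\otimes\bar x_k\|^2+C\alpha_k\sqrt{\mathbb{E}\|y_k'-v\otimes\bar y_k'\|^2(1+v_k)}+C\alpha_k^2(1+v_k)+(\text{noise cross term})$. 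For the consensus pieces I would project (\ref{alg:new form}) and (\ref{real-y}), use $\|\nabla F_{k+1}-\nabla F_k\|\le L\|x_{k+1}-x_k\|$ with $\|x_{k+1}-x_k\|\le\|\tilde{\mathbf{A}}-\mathbf{I}\|\,\|x_k-\mathbf{1}\otimes\bar x_k\|+\alpha_k\|y_k\|$ and $\mathbb{E}\|y_k\|^2\le C\big(1+v_k+\mathbb{E}\|x_k-\mathbf{1}\otimes\bar x_k\|^2+\mathbb{E}\|y_k'-v\otimes\bar y_k'\|^2\big)$ (the last via $\mathbb{E}\|\bar y_k'\|^2\le C(v_k+\mathbb{E}\|x_k-\mathbf{1}\otimes\bar x_k\|^2)$, from $nL$-smoothness and $\nabla f(x^*)=0$), and the $(1+\theta)$-inequality, obtaining $\mathbb{E}\|x_{k+1}-\mathbf{1}\otimes\bar x_{k+1}\|^2\le(1+\theta)\rho_A^2\,\mathbb{E}\|x_k-\mathbf{1}\otimes\bar x_k\|^2+C\alpha_k^2(1+v_k+u_k)$ and $\mathbb{E}\|y_{k+1}'-v\otimes\bar y_{k+1}'\|^2\le(1+\theta)\rho_B^2\,\mathbb{E}\|y_k'-v\otimes\bar y_k'\|^2+C\,\mathbb{E}\|x_k-\mathbf{1}\otimes\bar x_k\|^2+C\alpha_k^2(1+v_k+u_k)$. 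Since the $y'$-recursion feeds back a fixed multiple of the $x$-consensus error, I would work with the weighted Lyapunov quantity $u_k:=\mathbb{E}\|x_k-\mathbf{1}\otimes\bar x_k\|^2+\delta\,\mathbb{E}\|y_k'-v\otimes\bar y_k'\|^2$, choosing $\delta>0$ so small that $\rho':=\max\{(1+\theta)\rho_A^2+\delta C,(1+\theta)\rho_B^2\}<1$, whence $u_{k+1}\le\rho'u_k+C\alpha_k^2(1+v_k+u_k)$.

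The heart of the proof --- and the step I expect to be the main obstacle --- is the noise cross term $-2\alpha_k\langle\bar x_k-x^*,(u^{\intercal}/n\otimes\mathbf{I}_d)\xi_k\rangle$ in the $v_k$-recursion. Writing $\xi_k=v\otimes\bar\xi_k+\Pi\xi_k$, the $v$-component drops out in expectation because $\bar\xi_k=\tfrac1n\sum_i\epsilon_{i,k}$ depends only on the current noise, which is mean-zero given $\mathcal{F}_{k-1}$; the difficulty lies entirely in $\Pi\xi_k$, which --- as $\mathbf{A},\mathbf{B}$ are not doubly stochastic --- accumulates all past noises and is not a martingale difference, so its conditional expectation given the past does not vanish. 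My plan is to unroll $\Pi\xi_k=\Pi\epsilon_k+\sum_{t=0}^{k-1}(\tilde{\mathbf{B}}-\mathbf{I})\tilde{\mathbf{B}}^{k-1-t}\Pi\epsilon_t$: the $\Pi\epsilon_k$ term again contributes nothing in expectation; and for each $t<k$, unrolling $\bar x_k-x^*=(\bar x_t-x^*)-\sum_{s=t}^{k-1}\alpha_s(u^{\intercal}/n\otimes\mathbf{I}_d)y_s$ kills the $\bar x_t-x^*$ part (independent of $\epsilon_t$, which is mean-zero given $\mathcal{F}_{t-1}$), while each remaining term carries a factor $\alpha_s$ and a factor $\rho^{k-1-t}$ coming from $\|(\tilde{\mathbf{B}}-\mathbf{I})\tilde{\mathbf{B}}^{k-1-t}\Pi\|\le C\rho^{k-1-t}$. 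Using $\mathbb{E}\|\epsilon_t\|^2\le nU_1$, $\mathbb{E}\|(u^{\intercal}/n\otimes\mathbf{I}_d)y_s\|^2\le C\big(1+v_s+\mathbb{E}\|x_s-\mathbf{1}\otimes\bar x_s\|^2+\mathbb{E}\|y_s'-v\otimes\bar y_s'\|^2\big)$, Cauchy--Schwarz, $\sqrt{v_s}\le\tfrac12(1+v_s)$ and the estimate $\sum_{s\le k}\alpha_s\rho^{k-s}\le C\alpha_k$ (geometric decay dominates the polynomial rate of $\alpha_k=a/(k+b)^{\alpha}$), the resulting double sum collapses into a bound of the shape $M\alpha_k\big(\alpha_k+\sum_{t=k_0}^{k}\alpha_t\rho^{k-t}v_t+\sum_{t=k_0}^{k}\alpha_t\rho^{k-t}u_t\big)$, which is precisely the discounted-memory term of (\ref{form-1})--(\ref{form-2}). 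The same device handles the analogous cross term in the $x$-consensus recursion. The delicate bookkeeping is keeping straight which variables are $\mathcal{F}_{k-1}$- versus $\mathcal{F}_{t-1}$-measurable, and verifying $\sum_{s\le k}\alpha_s\rho^{k-s}\le C\alpha_k$ for $\alpha\in(1/2,1]$.

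Finally I would set $\gamma_k:=\alpha_k$, $\phi_k:=\alpha_k^{-2}=(k+b)^{2\alpha}/a^2$ and $\rho_k\equiv\rho'$, absorbing the corrections $C\alpha_k^2 u_k$ and $C\alpha_k^2 v_k$ into the $t=k$ summands of the memory sums and bounding $C\,\mathbb{E}\|x_k-\mathbf{1}\otimes\bar x_k\|^2\le C u_k$, so that with $M$ large and $k$ beyond some $k_0$ the pair $(u_k,v_k)$ obeys (\ref{form-1})--(\ref{form-2}). The hypotheses of Lemma~\ref{lem:ieq-1} all hold: $\sum_k\alpha_k^2<\infty$ since $2\alpha>1$, $\alpha_k/\alpha_{k+1}\to1$, $\gamma_k\sqrt{\phi_k}=1$, $\phi_{k-1}/\phi_k=\big((k-1+b)/(k+b)\big)^{2\alpha}\to1$, $\sum_k\phi_k^{-1}=\sum_k\alpha_k^2<\infty$, and $(\gamma_k\sqrt{\phi_k})^{-1}\big(\phi_{k-1}/\phi_k-\rho_k\big)\to1-\rho'>0$. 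Lemma~\ref{lem:ieq-1} then yields $\sup_k v_k<\infty$ and $\limsup_k\phi_k u_k<\infty$, i.e. $\mathbb{E}\|\bar x_k-x^*\|^2$ is bounded and $\mathbb{E}\|x_k-\mathbf{1}\otimes\bar x_k\|^2=\mathcal{O}(\alpha_k^2)$, $\mathbb{E}\|y_k'-v\otimes\bar y_k'\|^2=\mathcal{O}(\alpha_k^2)$; taking $U_2$ to be the largest of these constants and of the finitely many (finite, since $x_0$ is deterministic and $\zeta_i$ has $p\ge2$ gradient moments) values for $k\le k_0$ gives all three bounds.
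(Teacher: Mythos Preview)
Your proposal is correct and follows essentially the same route as the paper: the paper too packages the two consensus errors into a single weighted Lyapunov $u_k=\mathbb{E}\|x_k-\mathbf{1}\otimes\bar x_k\|_{\mathbf{A}}^2+c_3\,\mathbb{E}\|y_k'-v\otimes\bar y_k'\|_{\mathbf{B}}^2$ (your $\delta$ is their explicit $c_3$), sets $v_k=\mathbb{E}\|\bar x_k-x^*\|^2$, handles the non-martingale cross term $\mathbb{E}\langle\bar x_k-x^*,\alpha_k(u^\intercal/n\otimes\mathbf{I}_d)\xi_k\rangle$ by unrolling both $\xi_k$ and $\bar x_k-x^*$ and killing terms via the martingale property (packaged as a separate appendix lemma, Lemma~\ref{lem:noi-bound}), and then applies Lemma~\ref{lem:ieq-1} with $\phi_k=\alpha_k^{-2}$. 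The only cosmetic differences are that the paper works throughout in the $\|\cdot\|_{\mathbf{A}},\|\cdot\|_{\mathbf{B}}$ norms (rather than ``passing to $\|\cdot\|$''---note the contraction constants need not survive that passage, so keep the special norms in your $u_k$), and unrolls $\bar x_k-x^*$ back to $\bar x_0$ rather than to $\bar x_t$ for each $t$, which after swapping summation order is the same double sum.
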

Before presenting the proof of Lemma \ref{lem:rate}, we recall two vector norms
\cite[Lemma 3]{Song2021CompressedGT}
\begin{equation*}
\|\mathbf{x}\|_\mathbf{A}\define\left\|\hat{\mathbf{A}}\mathbf{x}\right\|,\quad \|\mathbf{x}\|_\mathbf{B}\define\left\|\hat{\mathbf{B}}\mathbf{x}\right\|,\quad \forall x\in\mathbb{R}^{nd}
\end{equation*}
and their induced matrices norms
\begin{equation*}
\left\|\mathbf{C}\right\|_\mathbf{A}\define\sup_{x\neq0}\frac{\|\mathbf{C}x\|_\mathbf{A}}{\|x\|_\mathbf{A}},~ \left\|\mathbf{C}\right\|_\mathbf{B}\define\sup_{x\neq0}\frac{\|\mathbf{C}x\|_\mathbf{B}}{\|x\|_\mathbf{B}},~ \forall \mathbf{C}\in\mathbb{R}^{nd\times nd},
\end{equation*} 
 where $\hat{\mathbf{A}},\hat{\mathbf{B}}\in \mathbb{R}^{nd\times nd}$ are some invertible matrices. These vector norms and their induced matrices norms have the following properties.
\begin{itemize}
	\item [(i)]
	\begin{align}
	\label{consensus-para}&\tau_\mathbf{A}\define\left\|\tilde{\mathbf{A}}-\frac{\mathbf{1}u^\intercal}{n}\otimes \mathbf{I}_d\right\|_\mathbf{A}< 1,\\ \label{consensus-para-1}&\tau_\mathbf{B}\define\left\|\tilde{\mathbf{B}}-\frac{v\mathbf{1}^\intercal}{n}\otimes \mathbf{I}_d\right\|_\mathbf{B}<1.
	\end{align}
	\item [(ii)]
	\begin{equation*}
	\|\mathbf{C}x\|_\A\le \|\mathbf{C}\|_\mathbf{A}\|x\|_\mathbf{A},\quad \|\mathbf{C}x\|_\mathbf{B}\le \|\mathbf{C}\|_\mathbf{B}\|x\|_\mathbf{B}
	\end{equation*}
	for any $\mathbf{C}\in\mathbb{R}^{nd\times nd},x\in\mathbb{R}^{nd}$.
	\item [(iii)]Let $\|\cdot\|_*$ and $\|\cdot\|_{**}$ be any two vector norms of $\|\cdot\|$, $\|\cdot\|_\A$ and $\|\cdot\|_\Z$. There exists a constant $\X>1$ such that
	\begin{equation}\label{norm-bound-1}
	\|x\|_*\le \X \|x\|_{**},\quad x\in\mathbb{R}^{nd}.
	\end{equation}
	%{\small	\begin{equation}
	%	\|x\|_\mathbf{B}\le\X\|x\|_\mathbf{A},\quad\|x\|_\mathbf{A}\le\X\|x\|_\mathbf{B},\quad\|x\|\le\X\|x\|_\mathbf{B}.
	%	\end{equation}}
\end{itemize}

\begin{proof}
	We employ  Lemma 1 to prove Lemma \ref{lem:rate} and  finish the proof by the following two steps: find relationships of $u_k$ and $v_k$ in the forms of (\ref{form-1}) and (\ref{form-2}) first, and then verify conditions (i)-(ii) of Lemma \ref{lem:ieq-1}.
	
	\textbf{Step 1.}
	By the definitions of $\bar{x}_{k}$, $y_k^{'}$ and $\xi_{k+1}$ in (\ref{real-y})-(\ref{average}), we have $y_k=y_k^{'}+\xi_{k+1}$ and 
	\begin{equation}\label{consensus-new-form}
	\begin{aligned}
	\bar{x}_{k+1}
	&=\left(\frac{u^\intercal}{n}\otimes\mathbf{I}_{d}\right)\tilde{\mathbf{A}}x_k-\alpha_k\left(\frac{u^\intercal}{n}\otimes\mathbf{I}_{d}\right)y_k\\
	&=\bar{x}_{k}-\alpha_k\left(\frac{u^\intercal}{n}\otimes\mathbf{I}_{d}\right)\left(y_k^{'}+\xi_k\right)\\
	&=\bar{x}_{k}-\frac{u^\intercal v\alpha_{k}}{n^2}\nabla f(\bar{x}_k)-\frac{u^\intercal v\alpha_{k}}{n}\left(\bar{y}^{'}_k-\frac{1}{n}\nabla f(\bar{x}_k)\right)\\
	&\quad-\alpha_k\left(\frac{u^\intercal}{n}\otimes\mathbf{I}_{d}\right)\left(y_k^{'}-v\otimes\bar{y}^{'}_k\right)-\alpha_{k}\left(\frac{u^\intercal}{n}\otimes\mathbf{I}_{d}\right)\xi_k,
	\end{aligned}
	\end{equation}
	where the second equality follows from  $u^\intercal\mathbf{A}=\mathbf{1}$. By the recursion (\ref{consensus-new-form}),
	{\footnotesize\begin{equation}\label{eq-auxi-3-0}
		\begin{aligned}
		&\mathbb{E}\left[\|\bar{x}_{k+1}-x^*\|^2\right]\\
		&=\mathbb{E}\left[\left\|\bar{x}_{k}-x^*-\frac{u^\intercal v\alpha_{k}}{n^2}\nabla f(\bar{x}_k)\right\|^2\right]\\
		&\quad+\alpha_{k}^2\mathbb{E}\left[\left\|\frac{u^\intercal v}{n}\left(\bar{y}^{'}_k-\frac{1}{n}\nabla f(\bar{x}_k)\right)-\left(\frac{u^\intercal}{n}\otimes\mathbf{I}_{d}\right)\left(y_k^{'}-v\otimes\bar{y}^{'}_k\right)\right.\right.\\
		&\quad\left.\left.-\left(\frac{u^\intercal}{n}\otimes\mathbf{I}_{d}\right)\xi_k\right\|^2\right]-2\mathbb{E}\left[\left\langle \bar{x}_{k}-x^*-\frac{u^\intercal v\alpha_{k}}{n^2}\nabla f(\bar{x}_k),\right.\right. \\
		&\quad\left.\left.\alpha_{k}\left(\frac{u^\intercal}{n}\otimes\mathbf{I}_{d}\right)\left(y_k^{'}-\frac{v}{n}\otimes \nabla f(\bar{x}_k)+\xi_k\right)\right\rangle\right]\\
		&\le\mathbb{E}\left[\left\|\bar{x}_{k}-x^*-\frac{u^\intercal v\alpha_{k}}{n^2}\nabla f(\bar{x}_k)\right\|^2\right]\\
		&\quad+3\alpha_{k}^2\left(\frac{u^\intercal v}{n}\right)^2\mathbb{E}\left[\left\|\bar{y}^{'}_k-\frac{1}{n}\nabla f(\bar{x}_k)\right\|^2\right]\\
		&\quad +\frac{3\alpha_{k}^2\|u\|^2\X^2}{n^2}\mathbb{E}\left[\left\|y_k^{'}-v\otimes\bar{y}^{'}_k\right\|_\mathbf{B}^2\right]+\frac{3\alpha_{k}^2\|u\|^2}{n^2}\mathbb{E}\left[\left\|\xi_k\right\|^2\right]\\
		&\quad-2\mathbb{E}\left[\left\langle \bar{x}_{k}-x^*, \alpha_{k}\left(\frac{u^\intercal}{n}\otimes\mathbf{I}_{d}\right)\left(y_k^{'}-\frac{v}{n}\otimes \nabla f(\bar{x}_k)+\xi_k\right)\right\rangle\right]+\\
		&\quad 2\frac{u^\intercal v\alpha_{k}^2}{n^2}\mathbb{E}\left[\left\langle \nabla f(\bar{x}_k), \left(\frac{u^\intercal}{n}\otimes\mathbf{I}_{d}\right)\left(y_k^{'}-\frac{v}{n}\otimes \nabla f(\bar{x}_k)+\xi_k\right)\right\rangle\right].
		\end{aligned}
		\end{equation}}
	By \cite[Lemma 10]{qu2017harnessing}\footnote{By the definition of $\alpha_{k}$, $\frac{u^\intercal v\alpha_{k}}{n^2}\le\frac{u^\intercal v a}{n^2b^\alpha}\le \frac{2}{nL}$, which satisfies the condition of \cite[Lemma 10]{qu2017harnessing}.}, the first term on the right hand of (\ref{eq-auxi-3-0})
	\begin{equation}\label{ie-3}
	\left\|\bar{x}_{k}-x^*-\frac{u^\intercal v\alpha_{k}}{n^2}\nabla f(\bar{x}_k)\right\|^2\le\left(1-\frac{u^\intercal v\mu\alpha_{k}}{n^2}\right)^2\left\|\bar{x}_{k}-x^*\right\|^2.
	\end{equation}
    For the second term on the right hand side of (\ref{eq-auxi-3-0}),
    \begin{align}\label{ie-4}
    &3\alpha_k^2\left(\frac{u^\intercal v}{n}\right)^2\mathbb{E}\left[\left\|\bar{y}^{'}_k-\frac{1}{n}\nabla f(\bar{x}_k)\right\|^2\right]\notag\\
    &=3\alpha_k^2\left(\frac{u^\intercal v}{n}\right)^2\mathbb{E}\left[\left\|\frac{1}{n}\sum_{i=1}^n\nabla f_i(x_{i,k})-\frac{1}{n}\nabla f(\bar{x}_k)\right\|^2\right]\notag\\
    &\le 3\alpha_k^2\left(\frac{u^\intercal v}{n}\right)^2\frac{L^2\X^2}{n}\mathbb{E}\left[\left\|x_k-\mathbf{1}\otimes\bar{x}_k\right\|_\mathbf{A}^2\right],
    \end{align}
    where $\X$ is defined in (\ref{norm-bound-1}), the equality follows from the fact $\bar{y}^{'}_k=\frac{1}{n}\sum_{i=1}^n\nabla f_i(x_{i,k})$ and the inequality follows from (\ref{ie-2}).
	By Lemma \ref{lem:noi-bound} (i) in Appendix \ref{apd:proof of Lem-rate}, the fourth term on the right hand of (\ref{eq-auxi-3-0}) 
	\begin{equation}\label{ie-5}
	\frac{3\alpha_{k}^2\|u\|^2}{n^2}\mathbb{E}\left[\left\|\xi_k\right\|^2\right]\le 3\alpha_{k}^2\frac{\|u\|^2c_b^2U_1}{n(1-\tau_\Z)^2},
	\end{equation}
	where  $c_b=\max\left\{\X^2,\frac{\left\|\mathbf{B}-\mathbf{I}_{n}\right\|}{\tau_\Z}\X^2\right\}$.
	Substitute (\ref{ie-3})-(\ref{ie-5}) into (\ref{eq-auxi-3-0}),
	{\small\begin{equation}\label{eq-auxi-3}
		\begin{aligned}
		&\mathbb{E}\left[\|\bar{x}_{k+1}-x^*\|^2\right]\\
		&\le\left(1-\frac{u^\intercal v\mu\alpha_{k}}{n^2}\right)^2\mathbb{E}\left[\left\|\bar{x}_{k}-x^*\right\|^2\right]+3\alpha_{k}^2\mathbb{E}\left[\left(\frac{u^\intercal v}{n}\right)^2\frac{L^2\X^2}{n}\right.\\
		&\quad \left.\left\|x_k-\mathbf{1}\otimes\bar{x}_k\right\|_\mathbf{A}^2+\frac{\|u\|^2\X^2}{n^2}\left\|y_k^{'}-v\otimes\bar{y}^{'}_k\right\|_\mathbf{B}^2\right]+3\alpha_{k}^2\frac{\|u\|^2c_b^2U_1}{n(1-\tau_\Z)^2}\\
		&\quad-2\mathbb{E}\left[\left\langle \bar{x}_{k}-x^*, \alpha_{k}\left(\frac{u^\intercal}{n}\otimes\mathbf{I}_{d}\right)\left(y_k^{'}-\frac{v}{n}\otimes \nabla f(\bar{x}_k)+\xi_k\right)\right\rangle\right]+\\
		&\quad 2\frac{u^\intercal v\alpha_{k}^2}{n^2}\mathbb{E}\left[\left\langle \nabla f(\bar{x}_k), \left(\frac{u^\intercal}{n}\otimes\mathbf{I}_{d}\right)\left(y_k^{'}-\frac{v}{n}\otimes \nabla f(\bar{x}_k)+\xi_k\right)\right\rangle\right].
		\end{aligned}
		\end{equation}}	
	%The last two terms on the right hand side of (\ref{eq-auxi-3})  mainly show the accumulated stochastic noise $\xi_k$ how to effect the optimal gap. 
	For the fourth term on the right hand side of (\ref{eq-auxi-3}), we have by Lemma \ref{lem:noi-bound} (ii)  (Appendix \ref{apd:proof of Lem-rate}) that
	{\small\begin{equation}\label{cross term-0}
		\begin{aligned}
		&-2\mathbb{E}\left[\left\langle \bar{x}_{k}-x^*, \alpha_{k}\left(\frac{u^\intercal}{n}\otimes\mathbf{I}_{d}\right)\left(y_k^{'}-\frac{v}{n}\otimes \nabla f(\bar{x}_k)+\xi_k\right)\right\rangle\right]\\
		&\le 2\alpha_{k}\left(\mathbb{E}\left[\left\|\bar{x}_{k}-x^*\right\|^2\right]\left(2\frac{(u^\intercal vL\X)^2}{n^{3}}\mathbb{E}\left[\left\|x_k-\mathbf{1}\otimes\bar{x}_k\right\|_\A^2\right]\right.\right.\\
		&\quad\left.\left.+2\frac{\|u\|^2\X^2}{n^2}\mathbb{E}\left[\left\|y_k^{'}-v\otimes \bar{y}^{'}_k\right\|_\Z^2\right]\right)\right)^{1/2}+\alpha_{k}^2\frac{5\|u\|^2\X c_b^2U_1 c_\rho}{n(1-\tau_\Z)^3}\\
		&\quad+ \frac{4\|u\|^2\X\alpha_k}{n^2(1-\tau_\Z)}\sum_{t=0}^{k-1}\alpha_{t}\tau_\Z^{k-t}\bigg(\mathbb{E}\left[\X^2\|y_t^{'}-v\otimes \bar{y}^{'}_t\|_\mathbf{B}^2\right]\\
		&\quad+\frac{\|v\|^2L^2\X^2}{n}\mathbb{E}\left[\|x_t-\mathbf{1}\otimes\bar{x}_t\|_\mathbf{A}^2\right]+\frac{\|v\|^2L^2}{n^2}\mathbb{E}\left[\|\bar{x}_t-x^*\|^2\right]\bigg).
		\end{aligned}
		\end{equation}}
       By the Lipschitz continuity of $\nabla f_i(\cdot)$, the fifth  term on the right hand side of (\ref{eq-auxi-3})
	{\small\begin{equation}\label{term3}
		\begin{aligned}
		&2\frac{u^\intercal v\alpha_{k}^2}{n^2}\mathbb{E}\left[\left\langle \nabla f(\bar{x}_k), \left(\frac{u^\intercal}{n}\otimes\mathbf{I}_{d}\right)\left(y_k^{'}-\frac{v}{n}\otimes \nabla f(\bar{x}_k)+\xi_k\right)\right\rangle\right]\\
		&\le 2\frac{u^\intercal v\alpha_{k}^2}{n^2}\mathbb{E}\left[ \left\|\nabla f(\bar{x}_k)\right\|\left(\frac{\|u\|}{n}\left\|y_k^{'}-\frac{v}{n}\otimes \bar{y}_k^{'}\right\|\right.\right.\\
		&\quad\left.\left.+\frac{u^\intercal v}{n}\left\|\bar{y}_k^{'}- \nabla f(\bar{x}_k)\right\|+\frac{\|u\|}{n}\left\|\xi_k\right\|\right)\right]\\
		&\le \frac{u^\intercal v\alpha_{k}^2}{n^2}\mathbb{E}\left[ 3\left\|\nabla f(\bar{x}_k)\right\|^2+
		 \left(\frac{\|u\|^2}{n^2}\left\|y_k^{'}-v\otimes \bar{y}_k^{'}\right\|^2\right.\right.\\
		 &\left.\left.+\left(\frac{u^\intercal v}{n}\right)^2\left\|\bar{y}_k^{'}-\frac{1}{n}\nabla f(\bar{x}_k)\right\|^2+\frac{\|u\|^2}{n^2}\left\|\xi_k\right\|^2\right)\right]\\
		&\le\frac{u^\intercal v\alpha_{k}^2}{n^2}\mathbb{E}\left[3n^2L^2\|\bar{x}_k-x^*\|^2+\left(\frac{u^\intercal v}{n}\right)^2\frac{L^2\X^2}{n}\left\|x_k-\mathbf{1}\otimes\bar{x}_k\right\|_\mathbf{A}^2\right]\\
		&+\frac{u^\intercal v\alpha_{k}^2}{n^2}\mathbb{E}\left[\frac{\|u\|^2\X^2}{n^2}\left\|y_k^{'}-v\otimes\bar{y}^{'}_k\right\|_\mathbf{B}^2\right]+\frac{u^\intercal v\|u\|^2c_b^2U_1}{n^3(1-\tau_\Z)^2}\alpha_{k}^2,
		\end{aligned}
		\end{equation}}
	where the first inequality follows from Cauchy-Schwartz inequality, the third inequality follows from (\ref{ie-4})-(\ref{ie-5}) and the fact 
	\begin{equation}\label{ie-6}
	\left\|\nabla f(\bar{x}_k)\right\|=\left\|\nabla f(\bar{x}_k)-\nabla f(x^*)\right\|\le nL \left\|\bar{x}_k-x^*\right\|.
	\end{equation}
	Substitute (\ref{cross term-0}) and (\ref{term3}) into (\ref{eq-auxi-3}), we have  
	\begin{equation}\label{ine-0}
	\begin{aligned}
	&\mathbb{E}\left[\|\bar{x}_{k+1}-x^*\|^2\right]\\
	&\le \left(1-\frac{u^\intercal v\mu\alpha_{k}}{n}\right)^2\mathbb{E}\left[\left\|\bar{x}_{k}-x^*\right\|^2\right]\\
	&\quad+2\alpha_{k}\left(\mathbb{E}\left[\left\|\bar{x}_{k}-x^*\right\|^2\right]\left(2\frac{(u^\intercal vL\X)^2}{n^{3}}\mathbb{E}\left[\left\|x_k-\mathbf{1}\otimes\bar{x}_k\right\|_\A^2\right]\right.\right.\\
	&\quad\left.\left.+2\frac{\|u\|^2\X^2}{n^2}\mathbb{E}\left[\left\|y_k^{'}-v\otimes \bar{y}^{'}_k\right\|_\Z^2\right]\right)\right)^{1/2}+c_0\alpha_{k}^2\\
	&\quad +c_0\alpha_{k}\sum_{t=0}^{k}\alpha_{t}\tau_\Z^{k-t}\bigg(\mathbb{E}\left[\|y_t^{'}-v\otimes \bar{y}^{'}_t\|_\mathbf{B}^2\right]\\
	&\quad+\mathbb{E}\left[\|x_t-\mathbf{1}\otimes\bar{x}_t\|_\mathbf{A}^2\right]+\mathbb{E}\left[\|\bar{x}_t-x^*\|^2\right]\bigg),
	\end{aligned}
	\end{equation}
	where $c_0>0$ is some positive constant.
	%	where
	%	$$c_0=\frac{\max\left\{1,~\left(\left(\frac{u^\intercal v}{n}\right)^3+3\left(\frac{u^\intercal v}{n}\right)^2\right)L^2\X^2,~4\|u\|c_b\|v\|^2L^2\X^2,~u^\intercal vL^2,~4\|u\|c_b\X^2,~(3+u^\intercal v)\|u\|^2\X^2,~(\|u\|^2+u^\intercal v)c_b^2U_1\right\}}{n(1-\tau_\Z)^2}.$$
	
	Note that for  any random vectors $\theta_1$, $\theta_2$ and positive scalar $\tau$, 	
	\begin{equation}\label{general-form}
	\begin{aligned}
	\mathbb{E}\left[\left|\left\|\theta_1+\theta_2\right\|\right|^2\right]
	%&\le\|x\|^2+\|y\|^2+2\|x\|\|y\|
	\le (1+\tau)\mathbb{E}\left[\|\theta_1\|^2\right]+\left(1+\frac{1}{\tau}\right)\mathbb{E}\left[\|\theta_2\|^2\right],
	\end{aligned}
	\end{equation}
	where $|\|\cdot\||$ could be $\|\cdot\|_\A$ or $\|\cdot\|_\Z$.	
	Choosing
	\begin{align*}
	&\theta_1=\left(\tilde{\mathbf{A}}-\frac{\mathbf{1}u^\intercal}{n}\otimes\mathbf{I}_{d}\right)\left(x_k-\mathbf{1}\otimes\bar{x}_{k}\right),\\
	& \theta_2=-\alpha_{k}\left(\mathbf{I}_{nd}-\frac{\mathbf{1}u^\intercal}{n}\otimes\mathbf{I}_{d}\right)y_k,
	\end{align*}
	we have $x_{k+1}-\mathbf{1}\otimes\bar{x}_{k+1}=\theta_1+\theta_2$ 
	%	\begin{equation}\label{consensus-0}
	%	\begin{aligned}
	%	x_{k+1}-\mathbf{1}\otimes\bar{x}_{k+1}&=\tilde{\mathbf{A}}x_k-\alpha_ky_k-\mathbf{1}\otimes\bar{x}_{k}+\alpha_k\left(\frac{\mathbf{1}u^\intercal}{n}\otimes\mathbf{I}_{d}\right)y_k\\
	%	&=\tilde{\mathbf{A}}\left(x_k-\mathbf{1}\otimes\bar{x}_{k}\right)-\alpha_{k}\left(\mathbf{I}_{nd}-\frac{\mathbf{1}u^\intercal}{n}\otimes\mathbf{I}_{d}\right)y_k\\
	%	&=\left(\tilde{\mathbf{A}}-\frac{\mathbf{1}u^\intercal}{n}\otimes\mathbf{I}_{d}\right)\left(x_k-\mathbf{1}\otimes\bar{x}_{k}\right)-\alpha_{k}\left(\mathbf{I}_{nd}-\frac{\mathbf{1}u^\intercal}{n}\otimes\mathbf{I}_{d}\right)y_k\\
	%	&=\theta_1+\theta_2,
	%	\end{aligned}
	%	\end{equation}	
	%	where the second equality follows from $\tilde{\mathbf{A}}\left(\mathbf{1}\otimes \mathbf{I}_{d}\right)=\left(\mathbf{A}\mathbf{1}\right)\otimes \mathbf{I}_{d}=\mathbf{1}\otimes \mathbf{I}_{d}$,
	%	the last equality follows from $u^\intercal\mathbf{1}=n$. 
	and
	\begin{equation}\label{x-bar}
	\begin{aligned}
	&\mathbb{E}\left[\|x_{k+1}-\mathbf{1}\otimes\bar{x}_{k+1}\|_\A^2\right]\\
	&\le (1+\tau)\mathbb{E}\left[\left\|\left(\tilde{\mathbf{A}}-\frac{\mathbf{1}u^\intercal}{n}\otimes\mathbf{I}_{d}\right)\left(x_k-\mathbf{1}\otimes\bar{x}_{k}\right)\right\|_\A^2\right]\\
	&\quad+\left(1+\frac{1}{\tau}\right)\mathbb{E}\left[\left\|\alpha_{k}\left(\mathbf{I}_{nd}-\frac{\mathbf{1}u^\intercal}{n}\otimes\mathbf{I}_{d}\right)y_k\right\|_\A^2\right]\\
	&\le (1+\tau)\tau_\A^2\mathbb{E}\left[\left\|x_k-\mathbf{1}\otimes\bar{x}_{k}\right\|_\A^2\right]\\
	&\quad+\left(1+\frac{1}{\tau}\right)\left\|\mathbf{I}_{n}-\frac{\mathbf{1}u^\intercal}{n}\right\|_\A^2\X^2\mathbb{E}\left[\left\|y_k\right\|^2\right]\\
	&\le\frac{1+\tau_\mathbf{A}^2}{2}\mathbb{E}\left[\left\|x_k-\mathbf{1}\otimes\bar{x}_{k}\right\|_\A^2\right]\\
	&\quad+\alpha_{k}^2\frac{1+\tau_\mathbf{A}^2}{1-\tau_\mathbf{A}^2}\left\|\mathbf{I}_{n}-\frac{\mathbf{1}u^\intercal}{n}\right\|_\A^2\X^2\mathbb{E}\left[\left\|y_k\right\|^2\right],
	\end{aligned}
	\end{equation}
	where $\tau_\A$ is defined in (\ref{consensus-para}), the last inequality follows from the setting $\tau=(1-\tau_\A^2)/(2\tau_\A^2)$. For the term $\mathbb{E}\left[\left\|y_k\right\|^2\right]$,
	{\small	\begin{equation}\label{y-error}
		\begin{aligned}
		\mathbb{E}\left[\left\|y_k\right\|^2\right]
		&=\mathbb{E}\left[\left\|y_k^{'}-v\otimes\bar{y}^{'}_k+v\otimes\left(\bar{y}^{'}_k-\frac{1}{n}\nabla f(\bar{x}_k)\right)\right.\right.\\
		&\quad\left.\left.+\frac{v}{n}\otimes \nabla f(\bar{x}_k)+\xi_k\right\|^2\right]\\
		&\le 4 \mathbb{E}\left[\left\|y_k^{'}-v\otimes\bar{y}^{'}_k\right\|^2+\left\|v\otimes\left(\bar{y}^{'}_k-\frac{1}{n}\nabla f(\bar{x}_k)\right)\right\|^2\right.\\
		&\quad\left.+\left\|\frac{v}{n}\otimes \nabla f(\bar{x}_k)\right\|^2+\left\|\xi_k\right\|^2\right]\\
		&\le 4 \mathbb{E}\left[\X^2\left\|y_k^{'}-v\otimes\bar{y}^{'}_k\right\|_\Z^2+\frac{(\|v\|L\X)^2}{n}\left\|x_k-\mathbf{1}\otimes\bar{x}_{k}\right\|_\A^2\right.\\
		&\quad\left.+\|v\|^2L^2\left\|\bar{x}_k-x^*\right\|^2\right]+4\frac{c_b^2}{(1-\tau_\Z)^2}nU_1,
		\end{aligned}
		\end{equation}	}
	where the second inequality follows from (\ref{ie-4}), (\ref{ie-5}) and (\ref{ie-6}). Substitute (\ref{y-error}) into (\ref{x-bar}),
	\begin{equation}\label{ine-1}
	\begin{aligned}
	&\mathbb{E}\left[\|x_{k+1}-\mathbf{1}\otimes\bar{x}_{k+1}\|_\A^2\right]\\
	&\le\frac{1+\tau_\mathbf{A}^2}{2}\mathbb{E}\left[\left\|x_k-\mathbf{1}\otimes \bar{x}_{k}\right\|_\A^2\right]+c_1\alpha_k^2\frac{c_b^2}{(1-\tau_\Z)^2}nU_1\\
	&+c_1\alpha_{k}^2\mathbb{E}\left[\X^2\left\|y_k^{'}-v\otimes\bar{y}^{'}_k\right\|_\Z^2+\frac{\|v\|^2L^2}{n}\X^2\left\|x_k-\mathbf{1}\otimes \bar{x}_{k}\right\|_\A^2\right.\\
	&\quad\left.+\|v\|^2L^2\left\|\bar{x}_k-x^*\right\|^2\right],
	\end{aligned}
	\end{equation}	
	where constant $c_1=4\frac{1+\tau_\mathbf{A}^2}{1-\tau_\mathbf{A}^2}\left\|\mathbf{I}_{n}-\frac{\mathbf{1}u^\intercal}{n}\right\|_\A^2\X^2$.

	Choosing
	\begin{align*}
	&\theta_1=\left(\tilde{\mathbf{B}}-\frac{v\mathbf{1}^\intercal}{n}\otimes\mathbf{I}_{d}\right)\left(y_{k}^{'}-v\otimes \bar{y}_{k}^{'}\right),\\
	&\theta_2=\left(\mathbf{I}_{nd}-\frac{v\mathbf{1}^\intercal}{n}\otimes\mathbf{I}_{d}\right)\left(\nabla F_{k+1}-\nabla F_k\right)
	\end{align*}
	in (\ref{general-form}), by the definitions of $y_{k+1}^{'}$ and $\bar{y}_{k+1}^{'}$, we have $y_{k+1}^{'}-\mathbf{1}\otimes\bar{y}_{k+1}^{'}=\theta_1+\theta_2$ and
	\begin{equation}\label{consensus-3}
	\begin{aligned}
	&\mathbb{E}\left[\|y_{k+1}^{'}-v\otimes\bar{y}_{k+1}^{'}\|_\Z^2\right]\\
	&\le(1+\tau)\mathbb{E}\left[\left\|\left(\tilde{\mathbf{B}}-\frac{v\mathbf{1}^\intercal}{n}\otimes\mathbf{I}_{d}\right)\left(y_{k}^{'}-v\otimes\bar{y}_{k}^{'}\right)\right\|_\Z^2\right]\\
	&\quad+\left(1+\frac{1}{\tau}\right)\mathbb{E}\left[\left\|\left(\mathbf{I}_{nd}-\frac{v\mathbf{1}^\intercal}{n}\otimes\mathbf{I}_{d}\right)\left(\nabla F_{k+1}-\nabla F_k\right)\right\|_\Z^2\right]\\
	&\le
	(1+\tau)\tau_\Z^2\mathbb{E}\left[\left\|y_{k}^{'}-v\otimes \bar{y}_{k}^{'}\right\|_\Z^2\right]\\
	&\quad+\left(1+\frac{1}{\tau}\right)\left\|\mathbf{I}_{nd}-\frac{v\mathbf{1}^\intercal}{n}\otimes\mathbf{I}_{d}\right\|_\Z^2\X^2\mathbb{E}\left[\left\|\nabla F_{k+1}-\nabla F_k\right\|^2\right]\\
	&\le
	\frac{1+\tau_\mathbf{B}^2}{2}\mathbb{E}\left[\left\|y_{k}^{'}-v\otimes \bar{y}_{k}^{'}\right\|_\Z^2\right]\\
	&+\frac{1+\tau_\mathbf{B}^2}{1-\tau_\mathbf{B}^2}\left\|\mathbf{I}_{nd}-\frac{v\mathbf{1}^\intercal}{n}\otimes\mathbf{I}_{d}\right\|_\Z^2\X^2\mathbb{E}\left[\left\|\nabla F_{k+1}-\nabla F_k\right\|^2\right],
	\end{aligned}
	\end{equation}
	where $\tau_\Z$ is defined in (\ref{consensus-para-1}), the last inequality follows from the setting $\tau=(1-\tau_\Z^2)/(2\tau_\Z^2)$. Moreover,
	\begin{equation}\label{F-error}
	\begin{aligned}
	&\mathbb{E}\left[\left\|\nabla F_{k+1}-\nabla F_k\right\|^2\right]\\
	&\le L^2\mathbb{E}\left[\left\|x_{k+1}-x_{k}\right\|^2\right]\\
	&=L^2 \mathbb{E}\left[\left\|\left(\tilde{\mathbf{A}}-\mathbf{I}_{nd}\right)\left(x_k-\mathbf{1}\otimes \bar{x}_k\right)-\alpha_ky_k\right\|^2\right]\\
	&\le 2L^2 \X^2\mathbb{E}\left[\left\|\tilde{\mathbf{A}}-\mathbf{I}_{nd}\right\|^2 \left\|x_k-\mathbf{1}\otimes \bar{x}_k\right\|_\A^2\right]+2L^2\alpha_k^2\mathbb{E}\left[\left\|y_k\right\|^2\right],
	\end{aligned}
	\end{equation}	
	where the first inequality follows from the Lipschitz continuity of $\nabla f_i(\cdot)$ as shown by (\ref{ie-1}), the equality follows from the fact $\left(\tilde{\mathbf{A}}-\mathbf{I}_{nd}\right)(\mathbf{1}\otimes \bar{x}_k)=\mathbf{0}$ by the row stochasticity of $\mathbf{A}$. Substitute (\ref{y-error}) and (\ref{F-error}) into (\ref{consensus-3}),
	{\small\begin{equation}\label{ine-2}
		\begin{aligned}
		&\mathbb{E}\left[\|y_{k+1}^{'}-v\otimes\bar{y}_{k+1}^{'}\|_\Z^2\right]\\
		&\le \frac{1+\tau_\mathbf{B}^2}{2}\mathbb{E}\left[\left\|y_{k}^{'}-v\otimes \bar{y}_{k}^{'}\right\|_\Z^2\right]\\
		&\quad+c_2\left\|\tilde{\mathbf{A}}-\mathbf{I}_{nd}\right\|^2\X^2\mathbb{E}\left[\left\|x_k-\mathbf{1}\otimes \bar{x}_k\right\|_\A^2\right]+\alpha_k^2\frac{4c_2c_b^2nU_1}{(1-\tau_\Z)^2}\\
		&\quad+4c_2\alpha_{k}^2\mathbb{E}\left[\X^2\left\|y_k^{'}-v\otimes\bar{y}^{'}_k\right\|_\Z^2+\frac{\|v\|^2L^2}{n}\X^2\left\|x_k-\mathbf{1}\otimes \bar{x}_{k}\right\|_\A^2\right.\\
		&\quad\left.+\|v\|^2L^2\left\|\bar{x}_k-x^*\right\|^2\right],
		\end{aligned}
		\end{equation}}
	where the constant
	\begin{equation}\label{parameter-2}
	c_2=2L^2\frac{1+\tau_\mathbf{B}^2}{1-\tau_\mathbf{B}^2}\left\|\mathbf{I}_{nd}-\frac{v\mathbf{1}^\intercal}{n}\otimes\mathbf{I}_{d}\right\|_\Z^2\X^2.
	\end{equation}

	Multiplying $c_3=\frac{1-\tau_\mathbf{A}^2}{4c_2\left\|\tilde{\mathbf{A}}-\mathbf{I}_{nd}\right\|^2\X^2}$ on both sides of inequality (\ref{ine-2}),
	{\small\begin{equation*}
		\begin{aligned}
		&c_3\mathbb{E}\left[\|y_{k+1}^{'}-v\otimes\bar{y}_{k+1}^{'}\|_\Z^2\right]\\
		&\le \frac{1+\tau_\mathbf{B}^2}{2}c_3\mathbb{E}\left[\left\|y_{k}^{'}-v\otimes \bar{y}_{k}^{'}\right\|_\Z^2\right]+\frac{1-\tau_\mathbf{A}^2}{4}\mathbb{E}\left[\left\|x_k-\mathbf{1}\otimes \bar{x}_k\right\|_\A^2\right]\\
		&\quad+4c_3c_2\alpha_k^2\frac{c_b^2}{(1-\tau_\Z)^2}nU_1+4c_3c_2\alpha_{k}^2\mathbb{E}\left[\X^2\left\|y_k^{'}-v\otimes\bar{y}^{'}_k\right\|_\Z^2\right.\\
		&\quad\left.+\frac{\|v\|^2L^2}{n}\X^2\left\|x_k-\mathbf{1}\otimes \bar{x}_{k}\right\|_\A^2+\|v\|^2L^2\left\|\bar{x}_k-x^*\right\|^2\right].
		\end{aligned}
		\end{equation*}}
	Substitute above inequality into (\ref{ine-1}), we have
	{\small\begin{equation}\label{ine-3}
		\begin{aligned}
		&\mathbb{E}\left[\|x_{k+1}-\mathbf{1}\otimes\bar{x}_{k+1}\|_\A^2\right]+c_3\mathbb{E}\left[\|y_{k+1}^{'}-v\bar{y}_{k+1}^{'}\|_\Z^2\right]\\
		&\le \frac{1+\tau_\mathbf{B}^2}{2}c_3\mathbb{E}\left[\left\|y_{k}^{'}-v\otimes \bar{y}_{k}^{'}\right\|_\Z^2\right]+\frac{3+\tau_\mathbf{A}^2}{4}\mathbb{E}\left[\left\|x_k-\mathbf{1}\otimes \bar{x}_k\right\|_\A^2\right]\\
		&\quad+\frac{(4c_3c_2+c_1)c_b^2nU_1}{(1-\tau_\Z)^2}\alpha_k^2+(4c_3c_2+c_1)\alpha_k^2\mathbb{E}\left[\X^2\left\|y_k^{'}-v\otimes\bar{y}^{'}_k\right\|_\Z^2\right.\\
		&\quad\left.+\frac{\|v\|^2L^2}{n}\X^2\left\|x_k-\mathbf{1}\otimes \bar{x}_{k}\right\|_\A^2+\|v\|^2L^2\left\|\bar{x}_k-x^*\right\|^2\right].
		\end{aligned}
		\end{equation}}
	Denote
	\begin{align*}
	&	u_k=\mathbb{E}\left[\|x_{k}-\mathbf{1}\otimes\bar{x}_{k}\|^2\right]+c_3\mathbb{E}\left[\|y_{k}^{'}-v\otimes\bar{y}_{k}^{'}\|^2\right],\\
	& v_k=\mathbb{E}\left[\|\bar{x}_{k}-x^*\|^2\right],\\
	& \rho_k=\max\left\{\frac{1+\tau_\mathbf{B}^2}{2},~\frac{3+\tau_\mathbf{A}^2}{4}\right\},
	\end{align*}
	$\gamma_{k}=\alpha_{k}$ and $\rho=\tau_\Z$. Then by inequalities (\ref{ine-0}) and (\ref{ine-3}),
	\begin{align}
	\label{form-3}u_{k+1}&\le\rho_ku_k+M\gamma_{k}\sqrt{u_k(1+u_k+v_k)}\\
	&\quad+M\gamma_{k}\left(\gamma_{k}+\sum_{t=0}^k\gamma_{t}\rho^{k-t}u_t+\sum_{t=0}^k\gamma_{t}\rho^{k-t}v_t\right),\notag\\
	\label{form-4}v_{k+1}&\le v_k+Mu_k+M\gamma_{k}\sqrt{u_k(1+u_k+v_k)}\\
	&+M\gamma_{k}\left(\gamma_{k}+\sum_{t=0}^k\gamma_{t}\rho^{k-t}u_t+\sum_{t=0}^k\gamma_{t}\rho^{k-t}v_t\right),\notag
	\end{align}
	where
	\begin{align*}
	&M=\max\left\{2\sqrt{2}\frac{u^\intercal vL\X}{n^{1.5}},~2\sqrt{2}\frac{\|u\|\X}{n\sqrt{c_3}},~(4c_3c_2+c_1)\|v\|^2L^2\bar{c}^2,\right.\\&\quad\quad\quad\quad\quad\left.c_0,~(4c_3c_2+c_1)\X^2,~\frac{(4c_3c_2+c_1)c_b^2nU_1}{(1-\tau_\Z)^2}\right\}.
	\end{align*}
	Obviously, (\ref{form-3}) and (\ref{form-4}) fall into the forms of (\ref{form-1}) and (\ref{form-2}).
	
	\textbf{Step 2.}
	By the facts  $\rho_k=\max\left\{\frac{1+\tau_\mathbf{B}^2}{2},~\frac{3+\tau_\mathbf{A}^2}{4}\right\}$ and $\gamma_k=\alpha_{k}$, there exists a positive integer $k_0$ such that $\gamma_{k}$,  $\rho_{k}$ are $[0,1]$-valued when $k\ge k_0$ (without loss of generality, suppose $k_0=0$). Then condition (i) of Lemma \ref{lem:ieq-1} holds.
	
	Let  $\phi_k=1/\alpha_{k}^2$. Obviously,
	\begin{align*}
	&\limsup_{k\rightarrow\infty}\left(\gamma_{k}\sqrt{\phi_k}+\frac{\phi_{k-1}}{\phi_k}\right)<\infty,\quad \sum_{k=0}^\infty\phi_k^{-1}<\infty,\\
	&\liminf_{k\rightarrow\infty}\left(\gamma_{k}\sqrt{\phi_k}\right)^{-1}\left(\frac{\phi_{k-1}}{\phi_k}-\rho_k\right)>0,
	\end{align*}
	which implies the condition (ii) of Lemma \ref{lem:ieq-1}. Then by Lemma \ref{lem:ieq-1},   $\sup_{k}\mathbb{E}\left[\|\bar{x}_{k+1}-x^*\|^2\right]<\infty$ and
	\begin{equation*}
	\sup_{k} \frac{1}{\alpha_{k}^2}\left(\mathbb{E}\left[\|x_{k}-\mathbf{1}\otimes\bar{x}_{k}\|^2\right]+c_3\mathbb{E}\left[\|y_{k}^{'}-\mathbf{1}\otimes\bar{y}_{k}^{'}\|^2\right]\right)<\infty.
	\end{equation*}
	The proof is complete.
\end{proof}

The proof of Lemma \ref{lem:rate} follows the steps in the proof of \cite[Lemma 5]{bianchi2013performance}, where the stability and agreement of distributed stochastic gradient descent algorithm are obtained. In \cite{bianchi2013performance}, the weight matrices are double stochastic and the stochastic gradient noise forms the martingale difference sequence, which ensures the technical result \cite[Lemma 3]{bianchi2013performance} can be applied. For $\mathcal{S}$-$\mathcal{AB}$, the weight matrices are not doubly stochastic and the stochastic noise accumulated during gradient tracking steps forms the autoregressive moving average processes. We need to present an extension of \cite[Lemma 3]{bianchi2013performance}, which is suitable for $\mathcal{S}$-$\mathcal{AB}$ first. Then we analyze the intrinsic structure of the accumulated stochastic noise and arrive at the coupled relationship between optimality gap and agreement errors where the extended result can be applied.
\begin{thm}\label{thm:rate}
Suppose that Assumptions \ref{ass:objective} and \ref{ass:matrix} hold. Let step-size $\alpha_{k}=a/(k+b)^\alpha$, where $\alpha\in (1/2,1)$, positive scalars $a,b$ satisfy  $\frac{a}{b^\alpha}\le \min\{1,\frac{n}{u^\intercal vL}\}$, $L\define\max_{1\leq i \leq n}\sqrt{\mathbb{E}[L_{i}^2(\zeta_i)]}$. Then
\begin{equation}\label{rate-1}
\mathbb{E}\left[\|\bar{x}_k-x^*\|^2\right]=\mathcal{O}\left(\alpha_k\right).
\end{equation}
Moreover, if $\alpha_{k}=a/(k+b)$ and positive scalars $a,b$ satisfy  $\frac{n^2}{u^\intercal v\mu }<a\le \min\{1,\frac{n}{u^\intercal vL}\}b$, 
$$\mathbb{E}\left[\|\bar{x}_k-x^*\|^2\right]=\mathcal{O}\left(\frac{1}{k}\right).$$
\end{thm}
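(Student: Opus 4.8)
The plan is to bootstrap the one-dimensional recursion (\ref{ine-0}) for $v_k:=\mathbb{E}\left[\|\bar{x}_k-x^*\|^2\right]$. First I would feed into (\ref{ine-0}) the estimates already established in Lemma \ref{lem:rate} — namely that the consensus error $\mathbb{E}\left[\|x_k-\mathbf{1}\otimes\bar{x}_k\|^2\right]$ and the tracking error $\mathbb{E}\left[\|y_k'-v\otimes\bar{y}_k'\|^2\right]$ are $\mathcal{O}(\alpha_k^2)$, while $v_k\le U_2$ uniformly — and, after using the norm equivalence (\ref{norm-bound-1}) to pass among $\|\cdot\|$, $\|\cdot\|_{\mathbf A}$, $\|\cdot\|_{\mathbf B}$, reduce (\ref{ine-0}) to a scalar recursion of the form
\begin{equation*}
v_{k+1}\le\left(1-\tfrac{u^\intercal v\mu\alpha_k}{n^2}\right)^2 v_k+C\alpha_k^2,\qquad k\ge k_1,
\end{equation*}
for some $k$-independent constant $C>0$; for $k$ large enough that $\tfrac{u^\intercal v\mu\alpha_k}{n^2}\le 1$ I would then bound the contraction factor by $1-\tfrac{u^\intercal v\mu\alpha_k}{n^2}$.

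To justify this reduction I would argue that every non-contractive term on the right of (\ref{ine-0}) is genuinely $\mathcal{O}(\alpha_k^2)$: the square-root cross term $2\alpha_k\big(v_k(\cdots)\big)^{1/2}$ is $\mathcal{O}(\alpha_k^2)$ because its bracket is $\mathcal{O}(\alpha_k^2)$ by Lemma \ref{lem:rate} and $v_k$ is bounded; the term $c_0\alpha_k^2$ is trivially so; and the geometric-weighted tail $c_0\alpha_k\sum_{t=0}^k\alpha_t\tau_{\mathbf B}^{k-t}(\cdots)$ is $\mathcal{O}(\alpha_k^2)$ once one has the auxiliary estimate $\sum_{t=0}^k\alpha_t\tau_{\mathbf B}^{k-t}=\mathcal{O}(\alpha_k)$, since the bracketed factors are uniformly bounded by Lemma \ref{lem:rate} (using $\alpha_k\le1$). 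I would prove that auxiliary estimate by splitting the sum at $t=\lceil k/2\rceil$: on the recent half $\alpha_t\le\alpha_{\lceil k/2\rceil}\le 2^{\alpha}\alpha_k$ by the slow variation of $\alpha_k$ (the property $\limsup_k\alpha_k/\alpha_{k+1}=1$ used already in Lemma \ref{lem:ieq-1}), which contributes $\mathcal{O}\big(\alpha_k/(1-\tau_{\mathbf B})\big)$, while on the old half $\tau_{\mathbf B}^{k-t}\le\tau_{\mathbf B}^{\lceil k/2\rceil-1}$ decays geometrically and is negligible against $\alpha_k$.

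The second step is to solve the scalar recursion $v_{k+1}\le(1-c\alpha_k)v_k+C\alpha_k^2$, $c:=u^\intercal v\mu/n^2$, by induction on the ansatz $v_k\le D\alpha_k$, the base case being supplied by $v_k\le U_2$. For $\alpha\in(1/2,1)$ the key fact is $\alpha_k-\alpha_{k+1}=\mathcal{O}\big(\alpha_k/(k+b)\big)=o(\alpha_k^2)$ (because $2\alpha<\alpha+1$), so $v_{k+1}\le D\alpha_k-(cD-C)\alpha_k^2\le D\alpha_{k+1}$ for all large $k$ whenever $D>C/c$, which gives (\ref{rate-1}). For $\alpha=1$, $\alpha_k=a/(k+b)$, one instead has $\alpha_k-\alpha_{k+1}=\tfrac{a}{(k+b)(k+1+b)}\sim\alpha_k^2/a$, of the same order as the residual, so the induction $v_k\le D/(k+b)$ closes only if $cD-C\ge D/a$ is solvable in $D>0$, i.e.\ if $c>1/a$ — which is exactly the hypothesis $a>n^2/(u^\intercal v\mu)$ — whence $v_k=\mathcal{O}(1/k)$. (Alternatively one may invoke a Chung-type lemma for recursions $v_{k+1}\le(1-c_k)v_k+d_k$.)

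The hard part will be the bookkeeping in the first step: confirming that no surviving term is only first-order in $\alpha_k$, in particular the geometric-sum estimate $\sum_t\alpha_t\tau_{\mathbf B}^{k-t}=\mathcal{O}(\alpha_k)$, and then, in the $\alpha=1$ regime, carrying the constants through precisely enough that the sharp threshold $a>n^2/(u^\intercal v\mu)$ emerges — which forces one to retain the exact relation $\alpha_k-\alpha_{k+1}\sim\alpha_k^2/a$ rather than discarding it, and to handle the factor $(1-c\alpha_k)^2$ by the crude bound $1-c\alpha_k$ so that the threshold constant matches the statement.
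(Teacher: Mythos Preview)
Your approach is correct and essentially coincides with the paper's: both feed the estimates of Lemma~\ref{lem:rate} into the one-step bound for $v_k=\mathbb{E}\|\bar x_k-x^*\|^2$ to obtain the scalar recursion $v_{k+1}\le(1-c\alpha_k)v_k+\mathcal{O}(\alpha_k^2)$ with $c=u^\intercal v\mu/n^2$, and then solve that recursion to get $\mathcal{O}(\alpha_k)$ (resp.\ $\mathcal{O}(1/k)$ under $ca>1$). The only cosmetic differences are that the paper handles the cross term via Young's inequality with a tunable parameter $\tau$ rather than the square-root form you take from (\ref{ine-0}), packages your geometric-sum estimate $\sum_{t\le k}\alpha_t\tau_{\mathbf B}^{k-t}=\mathcal{O}(\alpha_k)$ as a separate lemma (Lemma~\ref{lem:weighted-seq}, proved by induction rather than your split at $k/2$), and closes by citing Polyak's Chung-type lemmas instead of your explicit induction.
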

\begin{proof}
Combine the inequalities (\ref{eq-auxi-3}) with (\ref{term3}),
{\footnotesize\begin{equation}\label{ieq-0}
		\begin{aligned}
		&\mathbb{E}\left[\|\bar{x}_{k+1}-x^*\|^2\right]\\
		&\le\left(1-\frac{u^\intercal v\mu\alpha_{k}}{n^2}\right)^2\mathbb{E}\left[\left\|\bar{x}_{k}-x^*\right\|^2\right]\\
		&\quad+3\alpha_{k}^2\mathbb{E}\left[\left(\frac{u^\intercal v}{n}\right)^2\frac{L^2\X^2}{n}\left\|x_k-\mathbf{1}\otimes\bar{x}_k\right\|_\mathbf{A}^2\right]\\
		&\quad+3\alpha_{k}^2\mathbb{E}\left[\frac{\|u\|^2\X^2}{n^2}\left\|y_k^{'}-v\otimes\bar{y}^{'}_k\right\|_\mathbf{B}^2\right]+3\alpha_{k}^2\frac{\|u\|^2c_b^2U_1}{n(1-\tau_\Z)^2}\\
		&\quad-2\mathbb{E}\left[\left\langle \bar{x}_{k}-x^*, \alpha_{k}\left(\frac{u^\intercal}{n}\otimes\mathbf{I}_{d}\right)\left(y_k^{'}-\frac{v}{n}\otimes \nabla f(\bar{x}_k)+\xi_k\right)\right\rangle\right]\\
		&\quad+\frac{u^\intercal v\alpha_{k}^2}{n^2}\mathbb{E}\left[3n^2L^2\|\bar{x}_k-x^*\|^2+\left(\frac{u^\intercal v}{n}\right)^2\frac{L^2\X^2}{n}\left\|x_k-\mathbf{1}\otimes\bar{x}_k\right\|_\mathbf{A}^2\right.\\
		&\quad\left.+\frac{\|u\|^2\X^2}{n^2}\left\|y_k^{'}-v\otimes\bar{y}^{'}_k\right\|_\mathbf{B}^2\right]+\frac{u^\intercal v\|u\|^2c_b^2U_1}{n^3(1-\tau_\Z)^2}\alpha_{k}^2,
		\end{aligned}
		\end{equation}}
	where  $\bar{y}^{'}_k\define1/n\sum_{i=1}^ny_{i,k}^{'},~c_b=\max\left\{\X^2,\frac{\left\|\mathbf{B}-\mathbf{I}_{n}\right\|}{\tau_\Z}\X^2\right\}$, $\X$ is defined in (\ref{norm-bound-1}). For the fifth term on the right hand side of (\ref{ieq-0}),
	{\footnotesize\begin{equation}\label{ieq-1}
		\begin{aligned}
		&-2\mathbb{E}\left[\left\langle \bar{x}_{k}-x^*, \alpha_{k}\left(\frac{u^\intercal}{n}\otimes\mathbf{I}_{d}\right)\left(y_k^{'}-\frac{v}{n}\otimes \nabla f(\bar{x}_k)+\xi_k\right)\right\rangle\right]\\
		&\le \frac{\alpha_{k}\|u\|}{n}\mathbb{E}\left[\tau\left\| \bar{x}_{k}-x^*\right\|^2\right]-2\mathbb{E}\left[\left\langle \bar{x}_{k}-x^*, \alpha_{k}\left(\frac{u^\intercal}{n}\otimes\mathbf{I}_{d}\right)\xi_k\right\rangle\right]\\
		&\quad+\frac{\alpha_{k}\|u\|}{n}\mathbb{E}\left[\frac{1}{\tau}\left\|y_k^{'}-v\otimes \bar{y}^{'}_k+v\otimes \left(\bar{y}^{'}_k- \frac{1}{n}\nabla f(\bar{x}_k)\right)\right\|^2\right]\\
		&\le \frac{\tau \|u\|\alpha_k}{n}\mathbb{E}\left[\left\|\bar{x}_{k}-x^*\right\|^2\right]+\frac{2\|u\|\alpha_k}{n\tau}\mathbb{E}\left[\left\|y_k^{'}-v\otimes \bar{y}^{'}_k\right\|^2\right]\\
		&\quad+\frac{2\|u\|\|v\|^2L^2\X^2\alpha_k}{n^2\tau}\mathbb{E}\left[\left\|x_k-\mathbf{1}\otimes\bar{x}_k\right\|_\mathbf{A}^2\right]\\
		&\quad+ \alpha_{k}\frac{2\|u\|^2\X}{n^2(1-\tau_\Z)}\sum_{t=0}^{k-1}\alpha_{t}\tau_\Z^{k-t}\left(\mathbb{E}\left[\X^2\|y_t^{'}-v\otimes \bar{y}^{'}_t\|_\mathbf{B}^2\right]\right.\\
		&\quad\left.+\frac{\|v\|^2L^2\X^2}{n}\mathbb{E}\left[\|x_t-\mathbf{1}\otimes\bar{x}_t\|_\mathbf{A}^2\right]+\frac{\|v\|^2L^2}{n^2}\mathbb{E}\left[\|\bar{x}_t-x^*\|^2\right]\right)\\
		&\quad+\alpha_{k}^2\frac{2.5\|u\|^2\X c_b^2U_1 c_\rho}{n(1-\tau_\Z)^3},
		\end{aligned}
		\end{equation}}
	where $\tau$ could be any positive number, the inequality follows from (\ref{ie-4}) in the proof of Lemma \ref{lem:ieq-1} and (\ref{2-momon-2}) in Appendix \ref{apd:proof of Lem-rate}. Substitute (\ref{ieq-1}) into (\ref{ieq-0}), then by Lemma \ref{lem:rate} and Lemma \ref{lem:weighted-seq},
	{\small\begin{equation*}
		\begin{aligned}
		&\mathbb{E}\left[\|\bar{x}_{k+1}-x^*\|^2\right]\\
		&\le\left[\left(1-\frac{u^\intercal v\mu\alpha_{k}}{n^2}\right)^2+\frac{\tau \|u\|\alpha_k}{n} \right]\mathbb{E}\left[\left\|\bar{x}_{k}-x^*\right\|^2\right]\\
		&\quad+\mathcal{O}\left(\frac{\alpha_k^3}{\tau}\right)+\mathcal{O}\left(\alpha_k^2\right)\\
		&\le\left(1-\frac{u^\intercal v\mu\alpha_{k}}{n^2}+\left(\frac{u^\intercal v\mu\alpha_{k}}{n^2}\right)^2\right)\mathbb{E}\left[\left\|\bar{x}_{k}-x^*\right\|^2\right]+\mathcal{O}\left(\alpha_k^2\right)\\
		&\le \left(1-\frac{u^\intercal v\mu\alpha_{k}}{n^2}\right)\mathbb{E}\left[\left\|\bar{x}_{k}-x^*\right\|^2\right]+\left(\frac{u^\intercal v\mu}{n^2}\right)^2U_2\alpha_{k}^2+\mathcal{O}\left(\alpha_k^2\right),
		\end{aligned}
		\end{equation*}}
	where the equality follows from the setting $\tau=\left(\frac{u^\intercal v\mu}{n^2}\right)/\left(\frac{\|u\|}{n}\right)$. Then by  \cite[Lemma 5 in Chapter 2]{polyak1987Introduction}, we have 
	\begin{equation*}
	\mathbb{E}\left[\|\bar{x}_{k+1}-x^*\|^2\right]= \mathcal{O}\left(\alpha_k\right).
	\end{equation*}
	Note that $\lim_{k\rightarrow\infty} \frac{\alpha_{k-1}}{\alpha_{k}}=1$, then (\ref{rate-1}) holds.
	If $\alpha_{k}=a/(k+b)$ and positive scalars $a,b$ satisfy that $\frac{n^2}{u^\intercal v\mu }<a\le \min\{1,\frac{2n}{u^\intercal vL}\}b$, we have $\frac{u^\intercal v\mu a}{n^2}>1$ and then \cite[Lemma 4 in Chapter 2]{polyak1987Introduction} implies
	$$\mathbb{E}\left[\|\bar{x}_k-x^*\|^2\right]=\mathcal{O}\left(\frac{1}{k}\right).$$
	The proof is complete.	
\end{proof}

Combining  with Lemma \ref{lem:rate}, Theorem \ref{thm:rate}  implies that $\mathbb{E}\left[\|x_{i,k}-x^*\|^2\right]=\mathcal{O}\left(\frac{1}{k}\right)$ for any $i\in \mathcal{V}$, which is the optimal rate of  stochastic gradient method \cite{Rakhlin2012making}. 
Complement to \cite[Theorem 2]{pu2020distributed}, Theorem \ref{thm:rate} focuses on the undoubly stochastic weight matrices and directed graphs. 
%
% By Setting Step-size $\Alpha_{K}=\Frac{A}{K+B}$, Which Means $\Mathcal{S}$-$\Mathcal{Ab}$ Can Achieve Optimal Rate Like Centralized Stochastic Gradient Methods. Indeed, The Optimal Convergence Rate $\Mathcal{O}\Left(\Frac{1}{K}\Right)$ Has Been Obtained For Other Distributed Stochastic Algorithms [], But They Usually Require Weight Matrices Are Doubly Stochastic Or Communication Networks Are Undirected.}

In what follows, we show the asymptotic normality of $\mathcal{S}$-$\mathcal{AB}$ algorithm.
\begin{thm}\label{thm:asym norm}
	Suppose that Assumptions \ref{ass:objective} and \ref{ass:matrix} hold. Let step-size $\alpha_{k}=a/(k+b)^\alpha$, where $\alpha\in (1/2,1)$, positive scalars $a,b$ satisfy  $\frac{a}{b^\alpha}\le \min\{1,\frac{n}{u^\intercal vL}\}$, $L\define\max_{1\leq i \leq n}\sqrt{\mathbb{E}[L_{i}^2(\zeta_i)]}$. Then for any $i\in\mathcal{V}$, $x_{i,k}$ satisfies
	\begin{equation}\label{limit distribution}
	\frac{1}{\sqrt{k}}\sum_{t=0}^{k-1}\left(x_{i,t}-x^*\right)\stackrel{d}{\longrightarrow} N\left(\mathbf{0},\mathbf{H}^{-1}\mathbf{S}\mathbf{H}^{-1}\right),
	\end{equation}
	where $\mathbf{H}\define \nabla^2 f(x^*)$ and  $\mathbf{S}\define\Cov(\sum_{j=1}^n\nabla g_j(x^*,\zeta_{j}))$.
\end{thm}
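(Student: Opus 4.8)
\emph{Proof plan.} The idea is to reduce the dynamics of the weighted average $\bar x_k=\bigl(\tfrac{u^\intercal}{n}\otimes\mathbf I_d\bigr)x_k$ to a classical stochastic-approximation (SA) recursion driven by an ARMA-correlated noise, apply the asymptotic-normality result \cite[Theorem 3.4.2]{chen2006stochastic}, and then pass from $\bar x_k$ to each $x_{i,k}$ via the consensus estimate of Lemma \ref{lem:rate}. Starting from (\ref{consensus-new-form}), I would linearize $\nabla f$ at $x^*$ using Assumption \ref{ass:objective}(ii), $\nabla f(\bar x_k)=\mathbf H(\bar x_k-x^*)+r_k$ with $\|r_k\|\le c\|\bar x_k-x^*\|^2$, and use $\bar y_k'-\tfrac1n\nabla f(\bar x_k)=\tfrac1n\sum_i\bigl(\nabla f_i(x_{i,k})-\nabla f_i(\bar x_k)\bigr)$, to write
\begin{equation*}
\bar x_{k+1}-x^*=\Bigl(\mathbf I_d-\frac{u^\intercal v\alpha_k}{n^2}\mathbf H\Bigr)(\bar x_k-x^*)+\alpha_k\delta_k-\alpha_k\Bigl(\frac{u^\intercal}{n}\otimes\mathbf I_d\Bigr)\xi_k,
\end{equation*}
where (up to fixed constants) $\delta_k$ collects $r_k$, the consensus error $\tfrac1n\sum_i(\nabla f_i(x_{i,k})-\nabla f_i(\bar x_k))$ of $\bar y_k'$, and the gradient-tracking error $y_k'-v\otimes\bar y_k'$. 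By (\ref{ie-1}), Theorem \ref{thm:rate} ($\mathbb E\|\bar x_k-x^*\|^2=\mathcal O(\alpha_k)$) and Lemma \ref{lem:rate} ($\mathbb E\|x_k-\mathbf 1\otimes\bar x_k\|^2,\ \mathbb E\|y_k'-v\otimes\bar y_k'\|^2=\mathcal O(\alpha_k^2)$), each term of $\delta_k$ is $\mathcal O(\alpha_k)$ in the mean-square (resp.\ $L^1$) sense, so with step-size $\tilde\alpha_k:=\tfrac{u^\intercal v}{n^2}\alpha_k$ the bias is summable, $\sum_k\tilde\alpha_k\mathbb E\|\delta_k\|<\infty$ since $2\alpha>1$, and hence does not influence the Polyak--Ruppert limit; moreover $\mathbf H=\nabla^2 f(x^*)\succeq\mu\mathbf I_d$ by strong convexity, so the recursion has the structure required by \cite[Theorem 3.4.2]{chen2006stochastic}.

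It then remains to identify the limiting covariance, the delicate point flagged in the introduction, since $-\alpha_k(\tfrac{u^\intercal}{n}\otimes\mathbf I_d)\xi_k$ is not a martingale difference: $\xi_k$ solves the ARMA recursion $\xi_{k+1}=\tilde{\mathbf B}\xi_k+\epsilon_{k+1}-\epsilon_k$ driven by the innovation $\epsilon_k$. Now $\{\epsilon_k\}$ is a martingale-difference sequence for the natural filtration; by independence of $\{\zeta_i\}$ across agents, the convergence $x_{i,k}\to x^*$ from Theorem \ref{thm:rate}, and continuity of $x\mapsto\Cov(\nabla g_i(x;\zeta_i))$ (a consequence of Assumption \ref{ass:objective}(iii)), the conditional second moments of $\sum_i\epsilon_{i,k}$ converge to $\sum_i\Cov(\nabla g_i(x^*;\zeta_i))=\mathbf S$, and Assumption \ref{ass:objective}(iii) with $p>2$ provides the Lindeberg condition. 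Solving the ARMA recursion gives $\xi_k=\epsilon_k+\sum_{s=0}^{k-1}\tilde{\mathbf B}^{\,k-1-s}(\tilde{\mathbf B}-\mathbf I_{nd})\epsilon_s$, and the telescoping identity $\sum_{j\ge0}\tilde{\mathbf B}^{\,j}(\tilde{\mathbf B}-\mathbf I_{nd})=\tfrac{v\mathbf 1^\intercal}{n}\otimes\mathbf I_d-\mathbf I_{nd}$ (using $\mathbf 1^\intercal\mathbf B=\mathbf 1^\intercal$, so $\tilde{\mathbf B}^{\,j}\to\tfrac{v\mathbf 1^\intercal}{n}\otimes\mathbf I_d$ geometrically) lets one exchange the order of summation in $\sum_k\alpha_k(\tfrac{u^\intercal}{n}\otimes\mathbf I_d)\xi_k$ and, freezing $\alpha_k$ at $\alpha_s$ along the geometrically decaying tail (licit because $\limsup_k\alpha_k/\alpha_{k+1}=1$), show that the effective per-step innovation relative to $\tilde\alpha_k$ is $\bigl(\tfrac{n}{u^\intercal v}\bigr)(u^\intercal\otimes\mathbf I_d)\bigl[\mathbf I_{nd}+\tfrac{v\mathbf 1^\intercal}{n}\otimes\mathbf I_d-\mathbf I_{nd}\bigr]\epsilon_k=(\mathbf 1^\intercal\otimes\mathbf I_d)\epsilon_k=\sum_i\epsilon_{i,k}$, of asymptotic covariance $\mathbf S$. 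Thus \cite[Theorem 3.4.2]{chen2006stochastic} yields $\frac{1}{\sqrt K}\sum_{k=0}^{K-1}(\bar x_k-x^*)\stackrel{d}{\to}N(\mathbf 0,\mathbf H^{-1}\mathbf S\mathbf H^{-1})$, the scaling $\tfrac{u^\intercal v}{n^2}$ cancelling between the two copies of $\mathbf H^{-1}$ and the innovation normalisation.

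Finally, since $x_{i,k}-\bar x_k=\bigl((e_i^\intercal-\tfrac{u^\intercal}{n})\otimes\mathbf I_d\bigr)(x_k-\mathbf 1\otimes\bar x_k)$, Lemma \ref{lem:rate} gives $\mathbb E\bigl\|\tfrac{1}{\sqrt K}\sum_{k=0}^{K-1}(x_{i,k}-\bar x_k)\bigr\|\le\tfrac{C}{\sqrt K}\sum_{k=0}^{K-1}\alpha_k=\mathcal O\bigl(K^{1/2-\alpha}\bigr)\to0$ because $\alpha>1/2$, so this difference is $o_P(1)$ and Slutsky's theorem transfers the limit to $\tfrac{1}{\sqrt K}\sum_{k=0}^{K-1}(x_{i,k}-x^*)$, which is (\ref{limit distribution}). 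The main obstacle is the second step: matching the $\mathcal S$-$\mathcal{AB}$ dynamics to the ARMA-noise hypotheses of \cite[Theorem 3.4.2]{chen2006stochastic}, and above all computing the steady-state gain of the transformed noise through the transfer function so that all weighting constants collapse to the clean limit $\mathbf H^{-1}\mathbf S\mathbf H^{-1}$; once the SA form is secured, the bias bookkeeping of the first step is routine given Lemma \ref{lem:rate} and Theorem \ref{thm:rate}.
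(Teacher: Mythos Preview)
Your plan matches the paper's proof: cast $\bar x_k-x^*$ as an SA recursion, invoke \cite[Theorem~3.4.2]{chen2006stochastic} (restated as Lemma~\ref{lem:asym-norm}), identify the limiting noise covariance by showing that the ARMA-filtered term $(\tfrac{u^\intercal}{n}\otimes\mathbf I_d)\xi_k$ reduces, after summation, to the martingale increments $(\tfrac{\mathbf 1^\intercal}{n}\otimes\mathbf I_d)\epsilon_k$ (this is exactly Lemma~\ref{lem:noi-asym-norm} and the estimate~(\ref{noi-consensus})), and finally pass from $\bar x_k$ to $x_{i,k}$ via Lemma~\ref{lem:rate} and Slutsky.

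One packaging point is worth flagging. The paper keeps $h(x)=-\tfrac1n\nabla f(x+x^*)$ nonlinear and lets condition~(C2) of Lemma~\ref{lem:asym-norm} (i.e.\ Assumption~\ref{ass:objective}(ii)) absorb the linearization remainder; you instead linearize up front and fold $r_k$ into the bias $\delta_k$. Your $L^1$-summability $\sum_k\tilde\alpha_k\mathbb E\|r_k\|<\infty$ is correct, but the precise hypothesis~(\ref{apd:cond-3}) of the theorem you are citing is $\mathbb E\|\eta_k\|^2=\mathcal O(\alpha_k^{1+\delta})$, and for $r_k$ this would require a fourth-moment bound on $\bar x_k-x^*$ that is not established here; simply keeping $h$ nonlinear, as the paper does, avoids the issue entirely. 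A second minor slip: you invoke independence of the $\zeta_i$ across agents to write $\mathbf S=\sum_i\Cov(\nabla g_i(x^*;\zeta_i))$, but this is not assumed for Theorem~\ref{thm:asym norm} (it only enters later, in Theorem~\ref{thm:PI-rate}); the paper works directly with $\mathbf S=\Cov\bigl(\sum_j\nabla g_j(x^*;\zeta_j)\bigr)$, and the martingale CLT verification in Lemma~\ref{lem:noi-asym-norm} does not need the separable structure.
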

\begin{proof}
	By Lemma \ref{lem:rate},
	\begin{equation*}
	\begin{aligned}
	&\mathbb{E}\left[\left\|\frac{1}{\sqrt{k}}\sum_{t=0}^{k-1}\left(\bar{x}_{t}-x^*\right)-\frac{1}{\sqrt{k}}\sum_{t=0}^{k-1}\left(x_{i,t}-x^*\right)\right\|\right]\\
	&\le \frac{1}{\sqrt{k}}\sum_{t=0}^{k-1}\sqrt{\mathbb{E}\left[\|x_{t}-\mathbf{1}\otimes\bar{x}_{t}\|^2\right]}\le\frac{\sqrt{U_2}}{\sqrt{k}}\sum_{t=0}^{k-1}\alpha_t\rightarrow 0.
	\end{aligned}
	\end{equation*}
	Then  Slutsky's theorem implies (\ref{limit distribution}) if
	\begin{equation}\label{res-asym-norm}
	\frac{1}{\sqrt{k}}\sum_{t=0}^{k-1}\left(\bar{x}_{t}-x^*\right)\stackrel{d}{\rightarrow} N\left(\mathbf{0},\mathbf{H}^{-1}\mathbf{S}\mathbf{H}^{-1}\right)
	\end{equation}
	holds. In what follows, we show (\ref{res-asym-norm}) by Lemma \ref{lem:asym-norm} in Appendix \ref{apd:thm-asym-norm}.
	
	Firstly, we rewrite the recursion $\bar{x}_k-x^*$ in the form of (\ref{norm form}) in Lemma \ref{lem:asym-norm}. By the equality (\ref{consensus-new-form}),
	\begin{equation}\label{consensus-new-form-1}
	\begin{aligned}
	&\bar{x}_{k+1}-x^*\\
	&=\bar{x}_{k}-x^*-\tilde{\alpha}_k\frac{1}{n}\nabla f(\bar{x}_k)-\tilde{\alpha}_k\left(\bar{y}^{'}_k-\frac{1}{n}\nabla f(\bar{x}_k)\right)\\
	&\quad-\alpha_k\left(\frac{u^\intercal}{n}\otimes\mathbf{I}_{d}\right)\left(y_k^{'}-v\otimes\bar{y}^{'}_k\right)-\alpha_{k}\left(\frac{u^\intercal}{n}\otimes\mathbf{I}_{d}\right)\xi_k,
	\end{aligned}
	\end{equation}	
	where $\tilde{\alpha}_k\define\frac{u^\intercal v}{n}\alpha_k$.	Denote
	\begin{equation*}\label{sme def-1}
	\Delta_{k}=\bar{x}_k-x^*,\quad h(x)=-\frac{1}{n}\nabla f(x+x^*), \quad V(x)=\frac{1}{2}\|x\|^2
	\end{equation*}
	and
	\begin{align}
	\label{some def-2}&\eta_k=-\left(\bar{y}^{'}_k-\frac{1}{n}\nabla f(\bar{x}_k)\right)-\frac{n}{u^\intercal v}\left(\frac{u^\intercal}{n}\otimes\mathbf{I}_{d}\right)\left(y_k^{'}-v\otimes\bar{y}^{'}_k\right),\\
	&\mu_k=-\frac{n}{u^\intercal v}\left(\frac{u^\intercal}{n}\otimes\mathbf{I}_{d}\right)\xi_k.\notag
	\end{align}
	Then the linear recursion (\ref{consensus-new-form-1}) can be rewritten as
	\begin{equation}\label{consensus-linear}
	\Delta_{k+1}=\Delta_{k}+\tilde{\alpha}_kh(\Delta_{k})+\tilde{\alpha}_k\left(\eta_k+\mu_k\right),
	\end{equation}
	which is in the form of (\ref{norm form}) in Lemma \ref{lem:asym-norm}.
	
	Next, we verify the conditions (C0)-(C3) of Lemma \ref{lem:asym-norm}. By the definition of $\alpha_{k}$, (C0) holds. By the strong convexity of $f(x)$,
	\begin{align*}
	h(x)^\intercal \nabla V(x)&=\left\langle -\frac{1}{n}\nabla f(x+x^*), (x+x^*)-x^* \right\rangle\\
	&< \frac{1}{n}(f(x^*)-f(x+x^*))<0,\quad\forall x\neq 0,
	\end{align*}
	which implies (C1) of Lemma \ref{lem:asym-norm}.
	
	Set  $\mathbf{G}=-\frac{1}{n}\nabla^2f(x^*)$. Note that $\nabla^2f(x^*)$ is positive definite as  $f(x)$ is strongly convex and then $\mathbf{G}$ is stable. Subsequently, combining with Assumption \ref{ass:objective} (ii), (C2) of Lemma \ref{lem:asym-norm} holds.
	
	We are left to verify (C3) of Lemma \ref{lem:asym-norm}. We first verify (\ref{apd:cond-1}) of (C3). By the definition (\ref{auxi-seq}) of $\xi_{k}$,
	\begin{equation*}
	\xi_{k}=\tilde{\mathbf{B}}\xi_{k-1}+\epsilon_{k}-\epsilon_{k-1}=\sum_{t=0}^{k-1}\tilde{\mathbf{B}}^{k-1-t}\left(\tilde{\mathbf{B}}-\mathbf{I}_{nd}\right)\epsilon_{t}+\epsilon_{k},
	\end{equation*}
	and then
	\begin{equation*}
	\sum_{t=0}^{k-1}\mu_{t}=-\sum_{t=0}^{k-1}\frac{n}{u^\intercal v}\left(\frac{u^\intercal}{n}\otimes\mathbf{I}_{d}\right)\xi_t=-\sum_{t=0}^{k-1}\mathbf{D}(k-1,t)\epsilon_t,
	\end{equation*}
	where $\mathbf{D}(k-1,t)=\frac{n}{u^\intercal v}\left(\frac{u^\intercal}{n}\otimes\mathbf{I}_{d}\right)\tilde{\mathbf{B}}^{k-1-t}$ for $t<k-1$, $\mathbf{D}(k-1,k-1)=\frac{n}{u^\intercal v}\left(\frac{u^\intercal}{n}\otimes\mathbf{I}_{d}\right)$.
	Note that $\{\epsilon_k\}$ is a martingale difference sequence and by Assumption \ref{ass:objective} (iii),
	\begin{equation}\label{MD-bound}
	\sup_k \mathbb{E}\left[\|\epsilon_k\|^2|\mathcal{F}_{k}\right]\le nU_1.
	\end{equation}
	Then by \cite[Lemma B.6.1, Appendix B.6]{chen2006stochastic}, $$\sum_{t=0}^{k-1}\left(\frac{\mathbf{1}^\intercal}{n}\otimes \mathbf{I}_d\right)\epsilon_t=\mathcal{O}(\sqrt{k}),\quad \text{a.s.}$$ which implies
	\begin{equation}\label{as-conv-0}
	\tilde{\alpha}_{k-1}\sum_{t=0}^{k-1}\left(\frac{\mathbf{1}^\intercal}{n}\otimes \mathbf{I}_d\right)\epsilon_t=\mathcal{O}(\sqrt{k}\tilde{\al}_{k-1})=\mathcal{O}(k^{1/2-\al})\quad \text{a.s.}
	\end{equation}
	Note also that
	{\small\begin{equation}\label{noi-consensus-0}
	\begin{aligned}
	&\mathbb{E}\left[\left\|\tilde{\alpha}_{k-1}\sum_{t=0}^{k-1}\mathbf{D}_{k-1,t}\epsilon_t-\tilde{\alpha}_{k-1}\sum_{t=0}^{k-1}\left(\frac{\mathbf{1}^\intercal}{n}\otimes \mathbf{I}_d\right)\epsilon_t\right\|^2\right]\\
	&=\tilde{\alpha}_{k-1}^2\sum_{t=0}^{k-1}\mathbb{E}\left[\left\|\left[\mathbf{D}_{k-1,t}-\left(\frac{\mathbf{1}^\intercal}{n}\otimes \mathbf{I}_d\right)\right]\epsilon_t\right\|^2\right]\\
	&\le \tilde{\alpha}_{k-1}^2\sum_{t=0}^{k-1}\left\|\mathbf{D}_{k-1,t}-\left(\frac{\mathbf{1}^\intercal}{n}\otimes \mathbf{I}_d\right)\right\|^2\mathbb{E}\left[\left\|\epsilon_t\right\|^2\right]\\
	&= \tilde{\alpha}_{k-1}^2\sum_{t=0}^{k-1}\left\|\frac{n}{u^\intercal v}\left(\frac{u^\intercal}{n}\otimes\mathbf{I}_{d}\right)\left(\tilde{\mathbf{B}}^{k-1-t}-\frac{v\mathbf{1}^\intercal}{n}\otimes \mathbf{I}_d\right)\right\|^2\mathbb{E}\left[\left\|\epsilon_t\right\|^2\right]\\
	&\le  \tilde{\alpha}_{k-1}^2\sum_{t=0}^{k-1}\left(\frac{\|u\|}{u^\intercal v}\right)^2 \left\|\tilde{\mathbf{B}}^{k-1-t}-\frac{v\mathbf{1}^\intercal}{n}\otimes \mathbf{I}_d\right\|^2nU_1,
	%&\le \left(\frac{\|u\|\X}{u^\intercal v}\right)^2\frac{n}{1-\tau_\Z^2}U_1\tilde{\alpha}_{k-1}^2,
	\end{aligned}
	\end{equation}}
	where $\tau_\Z<1$ is defined in (\ref{consensus-para-1}), the first equality follows from that $\mathbb{E}\left[\epsilon_{t_1}^\intercal\epsilon_{t_2}\right]=\mathbb{E}\left[\mathbb{E}\left[\epsilon_{t_1}^\intercal\epsilon_{t_2}|\mathcal{F}_{t_1+1}\right]\right]=0$ for any $t_1<t_2$, $	\mathcal{F}_0 =\sigma\{x_{i,0},i\in \mathcal{V}\},$
	\begin{equation*}
	\mathcal{F}_k=\sigma\{x_{i,0},\epsilon_{i,t}:i\in \mathcal{V}, 0\leq t\leq k-1\}~(k>0),
	\end{equation*}
	the second inequality follows by taking full expectation on both sides of (\ref{MD-bound}).
	By the column stochasticity of matrix $\mathbf{B}$,
	{\footnotesize\begin{align*}
	\left\|\tilde{\mathbf{B}}^{t}-\frac{v\mathbf{1}^\intercal}{n}\otimes \mathbf{I}_d\right\|^2&=\left\|\left(\tilde{\mathbf{B}}^{t-1}-\frac{v\mathbf{1}^\intercal}{n}\otimes \mathbf{I}_d\right)\left(\tilde{\mathbf{B}}-\frac{v\mathbf{1}^\intercal}{n}\otimes \mathbf{I}_d\right)\right\|^2\notag\\
	&\le \X^2\left\|\left(\tilde{\mathbf{B}}^{t-1}-\frac{v\mathbf{1}^\intercal}{n}\otimes \mathbf{I}_d\right)\left(\tilde{\mathbf{B}}-\frac{v\mathbf{1}^\intercal}{n}\otimes \mathbf{I}_d\right)\right\|_\mathbf{B}^2\notag\\
	&\le \X^2\tau_\Z\left\|\tilde{\mathbf{B}}^{t-1}-\frac{v\mathbf{1}^\intercal}{n}\otimes \mathbf{I}_d\right\|_\mathbf{B}^2\notag\\
	&\le \cdots\le \X^2\tau_\Z^t.
	\end{align*}}
	Substituting above inequality into (\ref{noi-consensus-0}),
	\begin{equation}\label{noi-consensus}
	\begin{aligned}
	&\mathbb{E}\left[\left\|\tilde{\alpha}_{k-1}\sum_{t=0}^{k-1}\mathbf{D}_{k-1,t}\epsilon_t-\tilde{\alpha}_{k-1}\sum_{t=0}^{k-1}\left(\frac{\mathbf{1}^\intercal}{n}\otimes \mathbf{I}_d\right)\epsilon_t\right\|^2\right]\\
	&\le  \tilde{\alpha}_{k-1}^2\sum_{t=0}^{k-1}\left(\frac{\|u\|\X}{u^\intercal v}\right)^2 \tau_\Z^{2(k-1-t)}nU_1\\
	&\le \left(\frac{\|u\|\X}{u^\intercal v}\right)^2\frac{n}{1-\tau_\Z^2}U_1\tilde{\alpha}_{k-1}^2,
	\end{aligned}
	\end{equation}
	and then
	\begin{equation}\label{MDS -sum}
	\begin{aligned}
	&\sum_{k=1}^\infty\mathbb{E}\left[\left\|\tilde{\alpha}_{k-1}\sum_{t=0}^{k-1}\mathbf{D}_{k-1,t}\epsilon_t-\tilde{\alpha}_{k-1}\sum_{t=0}^{k-1}\left(\frac{\mathbf{1}^\intercal}{n}\otimes \mathbf{I}_d\right)\epsilon_t\right\|^2\right]\\
	&\le \sum_{k=0}^\infty \left(\frac{\|u\|\X}{u^\intercal v}\right)^2\frac{1}{1-\tau_\Z^2}U_1\tilde{\alpha}_{k-1}^2<\infty.
	\end{aligned}
	\end{equation}
	By (\ref{MDS -sum}) and the monotone convergence theorem,
	\begin{equation}\label{as-conv-1}
	\tilde{\alpha}_{k-1}\sum_{t=0}^{k-1}\mathbf{D}_{k-1,t}\epsilon_t-\tilde{\alpha}_{k-1}\sum_{t=0}^{k-1}\left(\frac{\mathbf{1}^\intercal}{n}\otimes \mathbf{I}_d\right)\epsilon_t\rightarrow 0\quad \text{a.s.}
	\end{equation}
	Combine (\ref{as-conv-0}) and (\ref{as-conv-1}),
	\begin{equation}\label{veri-C2-1-1}
	\begin{aligned}
	&\tilde{\alpha}_{k-1}\sum_{t=0}^{k-1}\mu_{t}\\
	&=\left(\tilde{\alpha}_{k-1}\sum_{t=0}^{k-1}\mu_{t}+\tilde{\alpha}_{k-1}\sum_{t=0}^{k-1}\left(\frac{\mathbf{1}^\intercal}{n}\otimes \mathbf{I}_d\right)\epsilon_t\right)\\
	&\quad-\tilde{\alpha}_{k-1}\sum_{t=0}^{k-1}\left(\frac{\mathbf{1}^\intercal}{n}\otimes \mathbf{I}_d\right)\epsilon_t\\
	&=-\left(\tilde{\alpha}_{k-1}\sum_{t=0}^{k-1}\mathbf{D}_{k-1,t}\epsilon_t-\tilde{\alpha}_{k-1}\sum_{t=0}^{k-1}\left(\frac{\mathbf{1}^\intercal}{n}\otimes \mathbf{I}_d\right)\epsilon_t\right)\\
	&\quad-\tilde{\alpha}_{k-1}\sum_{t=0}^{k-1}\left(\frac{\mathbf{1}^\intercal}{n}\otimes \mathbf{I}_d\right)\epsilon_t
	\rightarrow 0 \quad \text{a.s.}
	\end{aligned}   
	\end{equation}
	By Lemma \ref{lem:noi-asym-norm} in Appendix \ref{apd:thm-asym-norm}, we have
	\begin{equation}\label{veri-C2-1-2}
	\frac{1}{\sqrt{k}}\sum_{t=0}^{k-1} \mu_t\stackrel{d}{\rightarrow} N\left(\mathbf{0},\frac{1}{n^2}\mathbf{S}\right).
	\end{equation}
	Subsequently, (\ref{veri-C2-1-1}) and (\ref{veri-C2-1-2}) verify (\ref{apd:cond-1}) of (C3).
	
	Note that
	\begin{equation}\label{mu-2-monm}
	\begin{aligned}
	\mathbb{E}\left[\|\mu_k\|^2\right]&=\mathbb{E}\left[\left\|\frac{n}{u^\intercal v}\left(\frac{u^\intercal}{n}\otimes\mathbf{I}_{d}\right)\xi_k\right\|^2\right]\\
	&\le \left(\frac{\|u\|}{u^\intercal v}\right)^2\mathbb{E}\left[\left\|\xi_k\right\|^2\right]\le \left(\frac{\|u\|}{u^\intercal v}\right)^2 \frac{c_b^2}{(1-\tau_\Z)^2}nU_1,
	\end{aligned}
	\end{equation}
	where $c_b=\max\left\{\X^2,\frac{\left\|\mathbf{B}-\mathbf{I}_{n}\right\|}{\tau_\Z}\X^2\right\}$, $\X$ is defined in (\ref{norm-bound-1}), the second inequality follows from the boundedness of $\mathbb{E}\left[\left\|\xi_k\right\|^2\right]$ by Lemma \ref{lem:noi-bound}(i) in Appendix \ref{apd:proof of Lem-rate}. Then $\sum_{s\ge -t}^{\infty}\left\|\mathbb{E}\left[ \mu_t\mu^\intercal_{s+t}\right]\right\|\le c$ (by Lemma \ref{lem:noi-asym-norm} in  Appendix \ref{apd:thm-asym-norm}) and (\ref{mu-2-monm}) imply (\ref{apd:cond-2}) of condition (C3).
	
	By the definition of $\eta_k$ in (\ref{some def-2}),
	\begin{equation*}
	\begin{aligned}
	\mathbb{E}\left[\|\eta_k\|^2\right]
	&\le \frac{2L^2}{n}\mathbb{E}\left[\|x_{k}-\mathbf{1}\otimes\bar{x}_{k}\|^2\right]\\
	&\quad+2\left(\frac{\|u\|}{u^\intercal v}\right)^2 \mathbb{E}\left[\left\|y_k^{'}-v\otimes\bar{y}_k^{'}
	\right\|^2\right]\\
	&\le\left(\frac{2L^2}{n}+2\left(\frac{\|u\|}{u^\intercal v}\right)^2\right)U_2\alpha_{k}^2,
	\end{aligned}
	\end{equation*}
	where the first inequality follows from the facts $\bar{y}^{'}_k=\frac{1}{n}\sum_{j=1}^n\nabla f_j(x_{i,k})$ and (\ref{ie-2}), the last inequality follows from Lemma \ref{lem:rate}. Then (\ref{apd:cond-3}) of (C3) holds.
	
	Summarizing above results, by Lemma \ref{lem:asym-norm} in Appendix \ref{apd:thm-asym-norm},
	\begin{equation*}
	\frac{1}{\sqrt{k}}\sum_{t=0}^{k-1}\left(\bar{x}_{k}-x^*\right)\stackrel{d}{\longrightarrow} N\left(\mathbf{0},\mathbf{H}^{-1}\mathbf{S}\mathbf{H}^{-1}\right).
	\end{equation*}
	The proof is complete.
\end{proof}

Theorem \ref{thm:asym norm}
shows the asymptotic normality  of
Polyak-Ruppert averaged  $\mathcal{S}$-$\mathcal{AB}$.
%in that their asymptotic covariance coincides with that of a centralized estimator with perfect global model information and having access to all observations at all times.
One of the applications of asymptotic normality is to do statistical inference to construct the confidence regions of the true solution. For doing so, we need    to estimate the covariance matrix $\mathbf{H}^{-1}\mathbf{S}\mathbf{H}^{-1}$  first.
Motivated by the online plug-in method in the seminal work \cite{chen2020statistical}, we propose the following distributed plug-in method.

\floatname{algorithm}{}
\renewcommand{\thealgorithm}{}
\begin{algorithm}[H]
	\caption{\textbf{Distributed plug-in method}.}\label{alg:dis plug-in-1}
	\textbf{Initialization:} For $i\in\mathcal{V}$, set vector $u_{i,0}=[0,\cdots,\underbrace{1}_{i-th},\cdots,0]^\intercal$, matrices $\mathbf{H}_{i,0}=\mathbf{S}_{i,0}=\mathbf{0}\in\mathbb{R}^{d\times d}$.\\
	\textbf{General step:} For any $k=1,2,\cdots$,
	\begin{itemize}
		\item[1:]Update $u_{i,k}$ by
		$u_{i,k}=\sum_{j=1}^na_{ij}u_{j,k-1}$.
		\item[2:]Update $\mathbf{S}_{i,k},~\mathbf{H}_{i,k}$ by
		{\small\begin{align}
			&\mathbf{S}_{i,k}=\dfrac{k}{k+1}\sum_{j=1}^n a_{ij} \mathbf{S}_{j,k-1}^{(1)}\notag\\
			\label{S}&\quad+\frac{\nabla g_{i,k}\left(\nabla g_{i,k}-\nabla g_{i,k-1}\right)^\intercal+\left(\nabla g_{i,k}-\nabla g_{i,k-1}\right)\nabla g_{i,k}^\intercal}{2(k+1)u_{i,k}(i)},\\
			\label{est-H}&\mathbf{H}_{i,k}=\dfrac{k}{k+1}\sum_{j=1}^n a_{ij} \mathbf{H}_{j,k-1}+\frac{\nabla^2 g_{i,k}}{(k+1)u_{i,k}(i)}
			\end{align}}
		respectively, where $u_{i,k}(i)$ is the i-th component of $u_{i,k}$, $\nabla g_{i,k}=\nabla g_i(x_{i,k};\zeta_{i,k})$, $\nabla^2 g_{i,k}=\nabla^2 g_i(x_{i,k};\zeta_{i,k})$.
		\item[]\textbf{End for}.
		\item[]\textbf{Output}: $\mathbf{H}_{i,k},~\mathbf{S}_{i,k}$.
	\end{itemize}
\end{algorithm}
	
Iterates $\mathbf{S}_{i,k}$ and $\mathbf{H}_{i,k}$   in (\ref{S})-(\ref{est-H}) are the estimators of $\mathbf{S}$ and $\mathbf{H}$ respectively and then $\mathbf{H}_{i,k}^{-1}\mathbf{S}_{i,k}\mathbf{H}_{i,k}^{-1}$ is the estimator of $\mathbf{H}^{-1}\mathbf{S}\mathbf{H}^{-1}$.
In  Step 1, we estimate the nonnegative left eigenvector $u/n$ of weight matrix $\mathbf{A}$ by vector $[u_{1,k}(1),u_{2,k}(2),$ $\cdots,u_{n,k}(n)]^\intercal$, which will be used to  eliminate the error induced by weight matrix $\mathbf{A}$.
In Step 2, the rescaling 
technology  \cite[Algorithm 1]{MAI201994} is used to eliminate the error, that is, dividing $u_{i,k}(i)$ in the second term of right hand side of equalities (\ref{S})-(\ref{est-H}).
Moreover,  the first terms on the right hand of (\ref{S}) and (\ref{est-H}) are the aggregate steps to collect global information, while the second terms aim at estimating the covariance matrix and Hessian matrix with local information.
\begin{thm}\label{thm:PI-rate}
	Suppose that (a) the conditions of Theorem \ref{thm:asym norm} hold, (b) graph $\mathcal{G}_A$ is strong connected, $\mathbf{A}$ is row stochastic and $a_{ii}>0$ for any $i\in\mathcal{V}$, (c) $\Cov\left(\sum_{j=1}^n\nabla g_j(x^*;\zeta_{j})\right)$ has the separable structure, that is,
	\begin{equation*}
	\Cov\left(\sum_{j=1}^n\nabla g_j(x^*;\zeta_{j})\right)=\sum_{j=1}^n\Cov\left(\nabla g_j(x^*;\zeta_{j})\right),
	\end{equation*} 
	(d) for any $i\in\mathcal{V}$, there exist positive scalars $L_1$ and $L_2$ such that
	\begin{equation*}
	\begin{aligned}
	&\mathbb{E}\left[\left\|\nabla^2 g_i(x;\zeta_{i})-\nabla^2 g_i(x^*;\zeta_{i})\right\|\right] \le L_1\|x-x^*\|,\\
	&\mathbb{E}[\left\|\nabla^2 g_i(x^*;\zeta_{i})\right\|^2]\le L_2.
	\end{aligned}
	\end{equation*}
	Then  $$\mathbf{H}_{i,k}^{-1}\mathbf{S}_{i,k}\mathbf{H}_{i,k}^{-1}\longrightarrow\mathbf{H}^{-1}\mathbf{S}\mathbf{H}^{-1},\quad \forall i\in\mathcal{V},$$
	in probability.
\end{thm}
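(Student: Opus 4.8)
The plan is to prove the two consistency statements $\mathbf{H}_{i,k}\to\mathbf{H}$ and $\mathbf{S}_{i,k}\to\mathbf{S}$ in probability for every $i\in\mathcal{V}$, and then conclude by the continuous mapping theorem: since $\mathbf{H}=\nabla^2 f(x^*)$ is positive definite by Assumption~\ref{ass:objective}(i), the map $(\mathbf{P},\mathbf{Q})\mapsto \mathbf{P}^{-1}\mathbf{Q}\mathbf{P}^{-1}$ is continuous at $(\mathbf{H},\mathbf{S})$, hence $\mathbf{H}_{i,k}^{-1}\mathbf{S}_{i,k}\mathbf{H}_{i,k}^{-1}\to\mathbf{H}^{-1}\mathbf{S}\mathbf{H}^{-1}$ in probability. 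The first preparatory step is to handle the weight recursion: stacking the rows $u_{i,k}^\intercal$ gives $U_k=\mathbf{A}U_{k-1}$ with $U_0=\mathbf{I}_n$, so $u_{i,k}(i)=(\mathbf{A}^k)_{ii}$; under condition~(b) the matrix $\mathbf{A}$ is row stochastic, irreducible and aperiodic, hence primitive, so $\mathbf{A}^k\to \mathbf{1}(u/n)^\intercal$ geometrically, and in particular $(\mathbf{A}^k)_{ij}\to u_j/n>0$ with $c:=\inf_{j,k}(\mathbf{A}^k)_{jj}>0$.

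Next I would unroll the estimator recursions. Multiplying (\ref{est-H}) by $k+1$ and iterating yields
\begin{equation*}
\mathbf{H}_{i,k}=\frac{1}{k+1}\sum_{t=1}^{k}\sum_{j=1}^{n}(\mathbf{A}^{k-t})_{ij}\,\frac{\nabla^2 g_{j,t}}{(\mathbf{A}^{t})_{jj}},
\end{equation*}
and the same manipulation on (\ref{S}) gives $\mathbf{S}_{i,k}=\frac{1}{k+1}\sum_{t=1}^{k}\sum_{j=1}^{n}(\mathbf{A}^{k-t})_{ij}\,\Pi_{j,t}/(\mathbf{A}^{t})_{jj}$, where $\Pi_{j,t}=\tfrac12\big(\nabla g_{j,t}(\nabla g_{j,t}-\nabla g_{j,t-1})^\intercal+(\nabla g_{j,t}-\nabla g_{j,t-1})\nabla g_{j,t}^\intercal\big)$. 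Since $\big|(\mathbf{A}^{k-t})_{ij}/(\mathbf{A}^{t})_{jj}-1\big|\le c^{-1}\big(|(\mathbf{A}^{k-t})_{ij}-u_j/n|+|(\mathbf{A}^{t})_{jj}-u_j/n|\big)=\mathcal{O}(\lambda^{k-t}+\lambda^{t})$ for some $\lambda\in(0,1)$, and since $\sup_{j,t}\mathbb{E}\|\nabla^2 g_{j,t}\|<\infty$ and $\sup_{j,t}\mathbb{E}\|\Pi_{j,t}\|<\infty$ — these uniform bounds follow by conditioning on $\mathcal{F}_t$ (under which $\zeta_{j,t}$ is independent of $x_{j,t}$), the $L^2$ bounds in Assumption~\ref{ass:objective}(iii) and condition~(d), and the boundedness of $\mathbb{E}\|x_{j,t}-x^*\|^2$ from Lemma~\ref{lem:rate} — the contribution of the factor $(\mathbf{A}^{k-t})_{ij}/(\mathbf{A}^{t})_{jj}-1$ is $\mathcal{O}(1/k)$ in $L^1$. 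It therefore suffices to study the plain Cesàro averages $\tfrac{1}{k+1}\sum_{t,j}\nabla^2 g_{j,t}$ and $\tfrac{1}{k+1}\sum_{t,j}\Pi_{j,t}$.

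For the Hessian, write $\nabla^2 g_{j,t}=\nabla^2 g_j(x^*;\zeta_{j,t})+R_{j,t}$; condition~(d) gives $\mathbb{E}\|R_{j,t}\|\le L_1\mathbb{E}\|x_{j,t}-x^*\|$, and Theorem~\ref{thm:rate} together with Lemma~\ref{lem:rate} gives $\mathbb{E}\|x_{j,t}-x^*\|=\mathcal{O}(\alpha_t^{1/2})$, so $\tfrac{1}{k+1}\sum_{t,j}R_{j,t}\to0$ in $L^1$, while $\tfrac{1}{k+1}\sum_{t=1}^{k}\nabla^2 g_j(x^*;\zeta_{j,t})$ is a Cesàro average of i.i.d.\ matrices with bounded second moment, hence converges a.s.\ to $\mathbb{E}[\nabla^2 g_j(x^*;\zeta_j)]=\nabla^2 f_j(x^*)$; summing over $j$ gives $\mathbf{H}$. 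For the covariance I would first replace $x_{j,t}$ and $x_{j,t-1}$ by $x^*$ inside $\Pi_{j,t}$: expanding the differences, every error term carries a factor $L_j(\zeta_{j,t})\|x_{j,t}-x^*\|$ or $L_j(\zeta_{j,t-1})\|x_{j,t-1}-x^*\|$ times bounded gradient norms, and conditioning on $\mathcal{F}_t$ (resp.\ $\mathcal{F}_{t-1}$) decouples $L_j$ and $\|\nabla g_j(x^*;\zeta)-\nabla f_j(x^*)\|$ from $\|x_{j,t}-x^*\|$, so the Cesàro average of these errors vanishes in $L^1$ by Assumption~\ref{ass:objective}(iii) and the rate of Theorem~\ref{thm:rate}. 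After the substitution I would write $\nabla g_j(x^*;\zeta_{j,t})=\nabla f_j(x^*)+\epsilon'_{j,t}$ with $\{\epsilon'_{j,t}\}_t$ i.i.d., mean zero, $\mathbb{E}\|\epsilon'_{j,t}\|^2\le U_1$, and expand $\Pi_{j,t}$: the $\nabla f_j(x^*)\nabla f_j(x^*)^\intercal$ terms cancel, the terms linear in $\epsilon'$ vanish in Cesàro average (telescoping / strong law), $\tfrac{1}{k+1}\sum_t\epsilon'_{j,t}(\epsilon'_{j,t})^\intercal\to\Cov(\nabla g_j(x^*;\zeta_j))$ by the strong law, and $\tfrac{1}{k+1}\sum_t\tfrac12\big(\epsilon'_{j,t}(\epsilon'_{j,t-1})^\intercal+\epsilon'_{j,t-1}(\epsilon'_{j,t})^\intercal\big)\to0$ because it is a centred $1$-dependent sequence with bounded second moment, so its partial sums have variance $\mathcal{O}(k)$. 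Summing over $j$ and invoking the separability assumption~(c) gives $\tfrac{1}{k+1}\sum_{t,j}\Pi_{j,t}\to\sum_j\Cov(\nabla g_j(x^*;\zeta_j))=\mathbf{S}$.

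I expect the covariance estimator to be the main obstacle, for two reasons. First, one must track the accumulation of the mixing error produced by using the \emph{estimated} weights $(\mathbf{A}^k)_{ii}$ in place of $u_i/n$; this is managed by the geometric ergodicity of $\mathbf{A}$ together with the uniform $L^1$ bounds on $\Pi_{j,t}$, but the bookkeeping is heavier than for the Hessian because $\Pi_{j,t}$ couples two time steps. Second, the cross term $\nabla g_{j,t}(\nabla g_{j,t-1})^\intercal$ is neither a product of independent factors (as $x_{j,t}$ depends on $\zeta_{j,t-1}$) nor a martingale difference; identifying it, after the iterates are replaced by $x^*$, as an asymptotically $1$-dependent sequence and carefully controlling the attendant $L^1$ replacement errors via conditioning — while keeping the argument compatible with assumption~(c) so that the per-agent limits add up to $\mathbf{S}$ — is the delicate technical point.
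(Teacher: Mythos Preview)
Your proposal is correct and rests on the same three ingredients as the paper's proof (geometric ergodicity of $\mathbf{A}^k$, the rate $\mathbb{E}\|x_{j,t}-x^*\|=\mathcal{O}(\alpha_t^{1/2})$ from Lemma~\ref{lem:rate} and Theorem~\ref{thm:rate}, and the i.i.d./martingale structure at $x^*$), but the decomposition is organized differently. The paper writes $\mathbf{S}_{i,k}-\mathbf{S}=(\mathbf{S}_{i,k}-\bar{\mathbf{S}}_k)+(\bar{\mathbf{S}}_k-\mathbf{\Lambda}_k)+(\mathbf{\Lambda}_k-\mathbf{S})$, where $\bar{\mathbf{S}}_k=(u^\intercal/n\otimes\mathbf{I}_d)\mathbf{S}_k$ is the $u$-weighted network average and $\mathbf{\Lambda}_k$ is the Ces\`aro average of the symmetrized products evaluated at $x^*$: the first piece is a consensus error controlled by a contraction argument in an auxiliary column-wise matrix norm $\|\cdot\|_{\mathbf{A}c}$, the second combines the weight error $|u_j/(nu_{j,t}(j))-1|=\mathcal{O}(\lambda^t)$ with the iterate-replacement error, and the third is handled by citing an external lemma; this yields explicit $L^1$ rates $\mathbb{E}\|\mathbf{S}_{i,k}-\mathbf{S}\|=\mathcal{O}(k^{-\alpha/2})$ (and likewise for $\mathbf{H}_{i,k}$), after which a cited corollary gives the final conclusion. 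You instead unroll the recursion directly for each agent, compare the resulting weights $(\mathbf{A}^{k-t})_{ij}/(\mathbf{A}^t)_{jj}$ to $1$ in one stroke, and then identify the limit of the plain Ces\`aro average explicitly via the strong law and a martingale-difference/$1$-dependence argument. Your route is more self-contained (you supply the $\mathbf{\Lambda}_k\to\mathbf{S}$ step rather than citing it) and avoids the auxiliary matrix-norm machinery, at the price of delivering only convergence in probability rather than the quantitative $L^1$ rate the paper obtains.
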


\begin{proof}
A similar result has been presented in  \cite{zhao2021confidence},	we mimic the proofs of \cite[Theorem 3, Theorem 4]{zhao2021confidence} to show Theorem \ref{thm:PI-rate}	in three steps.
	
	\textbf{Step 1.} To facilitate analysis, we define a matrix norm first: for any $\mathbf{M}\in\mathbb{R}^{(nd)\times d}$,
	\begin{equation*}
	\|\mathbf{M}\|_{\A c}\define\sqrt{\|m_1\|_\A^2+\|m_2\|_\A^2+\cdots+\|m_d\|_\A^2}.
	\end{equation*}	
	where $m_1,m_2,\cdots,m_d$ are the columns of $\mathbf{M}$.
	It is easy to verify that, for any $\mathbf{D}\in\mathbb{R}^{(nd)\times (nd)}$,
	\begin{align}
	\label{mat-norm-0}&\|\mathbf{D}\mathbf{M}\|_{\A c}\le \|\mathbf{D}\|_\A\|\mathbf{M}\|_{\A c},
	\end{align}
	and
	\begin{equation}\label{mat-norm-2}
	\|\mathbf{M}\|_{\A c}\le \X\|\mathbf{M}\|_F,
	\end{equation}
	where $\|\cdot\|_F$ is Frobenius norm and $\X$ is defined in (\ref{norm-bound-1}).
	Define
	\begin{equation*}
	\mathbf{S}_{k}\define \left[\mathbf{S}_{1,k}^\intercal,\mathbf{S}_{2,k}^\intercal,\cdots,\mathbf{S}_{n,k}^\intercal\right]^\intercal,\quad\bar{\mathbf{S}}_{k}\define \sum_{j=1}^n\frac{u(j)}{n}\mathbf{S}_{j,k},
	\end{equation*}	
	 where $u(j)$ is the $j$-th component of vector $u$. 
	We have
	\begin{equation*}
	\mathbf{S}_k=\frac{k}{k+1}\tilde{\mathbf{A}}\mathbf{S}_{k-1}+\frac{1}{k+1}\mathbf{W}_{k},\quad \bar{\mathbf{S}}_{k}=\left(\frac{u^\intercal}{n}\otimes\mathbf{I}_d\right)\mathbf{S}_k,
	\end{equation*}
	where
	{\small\begin{align*}
	&\mathbf{W}_{k}\define\left[\mathbf{W}_{1,k}^\intercal,\mathbf{W}_{2,k}^\intercal,\cdots,\mathbf{W}_{n,k}^\intercal\right]^\intercal,\\
	&\mathbf{W}_{i,k}\define\frac{\nabla g_{i,k}\left(\nabla g_{i,k}-\nabla g_{i,k-1}\right)^\intercal+\left(\nabla g_{i,k}-\nabla g_{i,k-1}\right)\nabla g_{i,k}^\intercal}{2u_{i,k}(i)}.
	\end{align*} }
	Then
	{\small
	\begin{align}\label{S1-S2}
	&\mathbb{E}\left[\left\|\mathbf{S}_{k}-\mathbf{1}_n\otimes \bar{\mathbf{S}}_{k}\right\|_{\A c}\right]\notag\\
%	&=\mathbb{E}\left[\left\|\frac{k}{k+1}\tilde{\mathbf{A}}\mathbf{S}_{k-1}+\frac{1}{k+1}\mathbf{W}_{k}\right.\right.\\
%	&\left.\left.-\left(\frac{\mathbf{1}_nu^\intercal}{n}\otimes\mathbf{I}_d\right)\left(\frac{k}{k+1}\tilde{\mathbf{A}}\mathbf{S}_{k-1}+\frac{1}{k+1}\mathbf{W}_{k}\right)\right\|_{\A c}\right]\\
	&=\mathbb{E}\left[\left\|\frac{k}{k+1}\left(\tilde{\mathbf{A}}\mathbf{S}_{k-1}-\mathbf{1}_n\otimes \bar{\mathbf{S}}_{k-1}\right)\right.\right.\notag\\
	&\left.\left.\quad+\frac{1}{k+1}\left(\mathbf{I}_{dn}-\frac{\mathbf{1}_nu^\intercal}{n}\otimes\mathbf{I}_d\right)\mathbf{W}_{k}\right\|_{\A c}\right]\notag\\
	&= \mathbb{E}\left[\left\|\frac{k}{k+1}\left(\tilde{\mathbf{A}}-\frac{\mathbf{1}_nu^\intercal}{n}\otimes\mathbf{I}_d\right)\left(\mathbf{S}_{k-1}-\mathbf{1}_n\otimes \bar{\mathbf{S}}_{k-1}\right)\right.\right.\notag\\
	&\left.\left.\quad+\frac{1}{k+1}\left(\mathbf{I}_{dn}-\frac{\mathbf{1}_nu^\intercal}{n}\otimes\mathbf{I}_d\right)\mathbf{W}_{k}\right\|_{\A c}\right]\notag\\
	&\le \frac{k}{k+1}\left\|\tilde{\mathbf{A}}-\frac{\mathbf{1}_nu^\intercal}{n}\otimes\mathbf{I}_d\right\|_\A\mathbb{E}\left[\left\|\mathbf{S}_{k-1}-\mathbf{1}_n\otimes \bar{\mathbf{S}}_{k-1}\right\|_{\A c}\right]\notag\\
	&\quad+ \frac{1}{k+1}\left\|\mathbf{I}_{dn}-\frac{\mathbf{1}_nu^\intercal}{n}\otimes\mathbf{I}_d\right\|_\A\mathbb{E}\left[\left\|\mathbf{W}_{k}\right\|_{\A c}\right]\notag\\
	&\le \frac{k}{k+1}\tau_\mathbf{A}\mathbb{E}\left[\left\|\mathbf{S}_{k-1}-\mathbf{1}_n\otimes \bar{\mathbf{S}}_{k-1}\right\|_{\A c}\right]\notag\\
	&\quad+ \frac{1}{k+1}\left\|\mathbf{I}_{dn}-\frac{\mathbf{1}_nu^\intercal}{n}\otimes\mathbf{I}_d\right\|_\A\X\mathbb{E}\left[\left\|\mathbf{W}_{k}\right\|_F\right],
	\end{align}
	}
	where $\tau_\A=\left\|\tilde{\mathbf{A}}-\frac{\mathbf{1}_nu^\intercal}{n}\otimes\mathbf{I}_d\right\|_\A<1$, the first inequality and second inequality follow from (\ref{mat-norm-0}) and (\ref{mat-norm-2}) respectively. 
	By the definition of $\mathbf{W}_{k}$,
	{\footnotesize\begin{equation}\label{ie-7}
	\begin{aligned}
	&\mathbb{E}\left[\left\|\mathbf{W}_{k}\right\|_F\right]\\
	&=\mathbb{E}\left[\sqrt{\sum_{j=1}^n\left\|\frac{\nabla g_{j,k}\left(\nabla g_{j,k}-\nabla g_{j,k-1}\right)^\intercal+\left(\nabla g_{j,k}-\nabla g_{j,k-1}\right)\nabla g_{j,k}^\intercal}{2u_{j,k}(i)}\right\|_F^2}\right]\\
	&\le\mathbb{E}\left[\sqrt{\sum_{j=1}^n\left\|\frac{\nabla g_{j,k}\left(\nabla g_{j,k}-\nabla g_{j,k-1}\right)^\intercal}{u_{j,k}(i)}\right\|_F^2}\right]\\
	&\le\sum_{j=1}^n\mathbb{E}\left[\left\|\frac{\nabla g_{j,k}\left(\nabla g_{j,k}-\nabla g_{j,k-1}\right)^\intercal}{u_{j,k}(i)}\right\|_F\right]\\
	&\le \sum_{j=1}^n\frac{\mathbb{E}\left[\left\|\nabla g_{j,k}\left(\nabla g_{j,k}-\nabla g_{j,k-1}\right)^\intercal\right\|_F\right]}{\eta}\\
	&\le \sum_{j=1}^n\frac{1.5\mathbb{E}\left[\left\|\nabla g_{j,k}\right\|^2\right]+0.5\mathbb{E}\left[\left\|\nabla g_{j,k-1}\right\|^2\right]}{\eta}\\
	&\le n \sup_{j\in\mathcal{V},k\ge 0} \frac{2\mathbb{E}\left[\left\|\nabla g_{j,k}\right\|^2\right]}{\eta},
	\end{aligned}
	\end{equation}}
	where $\eta$ is some positive constant, the third inequality follows from the fact $u_{j,k}(j)\ge\eta$ \cite[Proposition 1]{MAI201994}, the fourth inequality follows from the facts $\|xy^\intercal\|_F=\|x\|\|y\|$ for any $x,y\in\mathbb{R}^d$ and $a_1a_2\le \frac{a_1^2+a_2^2}{2}$ for any $a_1,a_2\ge0$. Moreover,
	\begin{equation}\label{ie-8}
	\begin{aligned}
	&\sup_{k\ge0}\mathbb{E}\left[\left\|\nabla g_{j,k}\right\|^2\right]\\
	&\le 2\sup_{k\ge 0}\left(\mathbb{E}\left[\left\|\nabla g_j-\nabla g_{j,t}^*\right\|^2\right]+\mathbb{E}\left[\left\|\nabla g_{j,t}^*\right\|^2\right]\right)\\
	&\le 2\sup_{k\ge 0}\left(L^2\mathbb{E}\left[\left\|x_{j,k}-x^*\right\|^2\right]+\mathbb{E}\left[\left\|\nabla g_{j,t}^*\right\|^2\right]\right)\\
	&\le 2\sup_{k\ge 0}\left(2L^2\mathbb{E}\left[\left\|x_{j,k}-\bar{x}_k\right\|^2\right]+2L^2\mathbb{E}\left[\left\|\bar{x}_k-x^*\right\|^2\right]\right.\\
	&\left.\quad+\mathbb{E}\left[\left\|\nabla g_{j,t}^*\right\|^2\right]\right)\\
	&<\infty,
	\end{aligned}
	\end{equation}
	where $\nabla g_{j,t}^*\define\nabla g_j(x^*,\zeta_{j,t})$, the fourth inequality follows from the boundedness of $\mathbb{E}\left[\left\|x_{j,k}-\bar{x}_k\right\|^2\right]$, $\mathbb{E}\left[\left\|\bar{x}_k-x^*\right\|^2\right]$	in Lemma \ref{lem:rate}. Combining (\ref{ie-7}) with (\ref{ie-8}),
	we have $\mathbb{E}\left[\left\|\mathbf{W}_{k}\right\|_F\right]\le \frac{2nc_g}{\eta}$, where
	\begin{equation}\label{c_g}
	c_g\define \sup_{k\ge0}\mathbb{E}\left[\left\|\nabla g_j(x_{j,t},\zeta_{j,t})\right\|^2\right].
	\end{equation}
	 Therefore, (\ref{S1-S2}) implies following inequality recursively,
	\begin{equation*}
	\begin{aligned}
	&\mathbb{E}\left[\left\|\mathbf{S}_{k}-\mathbf{1}_n\otimes \bar{\mathbf{S}}_{k}\right\|_{\A c}\right]\\
	&\le \frac{1}{k+1}\left(\tau_\mathbf{A}^k\mathbb{E}\left[\left\|\mathbf{S}_{0}-\mathbf{1}_n\otimes \bar{\mathbf{S}}_{0}\right\|_{\A c}\right]\right.\\
	&\left.\quad+ \sum_{t=1}^k\tau_\mathbf{A}^{k-t}\left\|\mathbf{I}_{dn}-\frac{\mathbf{1}_nu^\intercal}{n}\otimes\mathbf{I}_d\right\|_\A \frac{2nc_g\X}{\eta}\right)\\
	&=\mathcal{O}\left(\frac{1}{k+1}\right).
	\end{aligned}
	\end{equation*}	
	
	\textbf{Step 2.} Define 
	\begin{align*}
	&\mathbf{\Lambda}_k\define\frac{1}{k+1}\sum_{t=0}^k\sum_{j=1}^n\left(\frac{\nabla g_{j,k}^*\left(\nabla g_{j,k}^*-\nabla g_{j,k-1}^*\right)^\intercal}{2}\right.\\
	&\left.\quad\quad+\frac{\left(\nabla g_{j,k}^*-\nabla g_{j,k-1}^*\right)\nabla g_{j,k}^{*\intercal}}{2}\right),
	\end{align*}
	where $\nabla g_{j,t}^*\define\nabla g_j(x^*,\zeta_{j,t})$.
	Note that 
	\begin{align*}
	\bar{\mathbf{S}}_{k}&=\frac{k}{k+1}\bar{\mathbf{S}}_{k-1}+\frac{1}{k+1}\left(\frac{u^\intercal}{n}\otimes\mathbf{I}_d\right)\mathbf{W}_{k}\\
	&=\frac{1}{k+1}\sum_{t=0}^k\sum_{j=1}^n\frac{u_j}{n}\left(\frac{\nabla g_{j,t}\left(\nabla g_{j,t}-\nabla g_{j,t-1}\right)^\intercal}{2u_{j,t}(i)}\right.\\
	&\left.\quad+\frac{\left(\nabla g_{j,t}-\nabla g_{j,t-1}\right)\nabla g_{j,t}^\intercal}{2u_{j,t}(i)}\right).
	\end{align*}
    We have
	{\small\begin{equation}\label{S-S bound}
	\begin{aligned}
	&\mathbb{E}\left[\left\|\bar{\mathbf{S}}_{k}-\mathbf{\Lambda}_k\right\|_F\right]\\
	&\le\mathbb{E}\left[\left\|\frac{1}{k+1}\sum_{t=0}^k\sum_{j=1}^n\left[\frac{u_j}{n}\frac{\nabla g_{j,t}\left(\nabla g_{j,t}-\nabla g_{j,t-1}\right)^\intercal}{u_{j,t}(i)}\right.\right.\right.\\
	&\left.\left.\left.\quad-\nabla g_{j,t}^*\left(\nabla g_{j,t}^*-\nabla g_{j,t-1}^*\right)^\intercal\right]\right\|_F\right]\\
	&\le\mathbb{E}\left[\left\|\frac{1}{k+1}\sum_{t=0}^k\sum_{j=1}^n\left[\frac{u_j}{nu_{j,t}(i)}-1\right]\nabla g_{j,t}\left(\nabla g_{j,t}-\nabla g_{j,t-1}\right)^\intercal\right\|_F\right]\\
	&\quad+\mathbb{E}\left[\left\|\frac{1}{k+1}\sum_{t=0}^k\sum_{j=1}^n\left[\nabla g_{j,t}\left(\nabla g_{j,t}-\nabla g_{j,t-1}\right)^\intercal\right.\right.\right.\\
	&\left.\left.\left.\quad-\nabla g_{j,t}^*\left(\nabla g_{j,t}^*-\nabla g_{j,t-1}^*\right)^\intercal\right]\right\|_F\right].
	\end{aligned}
	\end{equation}}
	For the first term on the right hand side of above inequality,
	{\small\begin{align}\label{si-ie-1}
	&\mathbb{E}\left[\left\|\frac{1}{k+1}\sum_{t=0}^k\sum_{j=1}^n\left[\frac{u_j}{nu_{j,t}(i)}-1\right]\nabla g_{j,t}\left(\nabla g_{j,t}-\nabla g_{j,t-1}\right)^\intercal\right\|_F\right]\notag\\
	&\le \frac{1}{k+1}\sum_{t=0}^k\sum_{j=1}^n\left|\frac{u_j}{nu_{j,t}(i)}-1\right|\left(1.5\mathbb{E}\left[\left\|\nabla g_{j,t}\right\|^2\right]\right.\notag\\
	&\left.\quad+0.5\mathbb{E}\left[\left\|\nabla g_{j,t-1}\right\|^2\right]\right)\notag\\
	&\le \frac{1}{k+1}\sum_{t=0}^k\sum_{j=1}^n2\left|\frac{u_j}{nu_{j,t}(i)}-1\right|c_g\notag\\
	&\le \frac{1}{k+1}\sum_{t=0}^k\frac{2nc_gc\lambda^k}{\eta}\le \frac{1}{k+1}\frac{2nc_gc}{(1-\lambda)\eta},
	\end{align}}
	where $c_g$ is defined in (\ref{c_g}), constants $c,\lambda$ and $\eta$ satisfy $c>0$, $0<\lambda<1$ and $\eta>0$ respectively, the first inequality follows from the facts $\|xy^\intercal\|_F=\|x\|\|y\|$ for any $x,y\in\mathbb{R}^d$ and $a_1a_2\le \frac{a_1^2+a_2^2}{2}$ for any $a_1,a_2\ge0$, the third inequality follows from the fact $\left|u_{j,k}(j)-\frac{u_j}{n}\right|\le c\lambda^k$ and $u_{j,k}(j)\ge\eta$ \cite[Proposition 1]{MAI201994}. For the second term on the right hand side of (\ref{S-S bound}),
	{\footnotesize\begin{align}\label{si-ie-2}
	&\mathbb{E}\left[\left\|\frac{1}{k+1}\sum_{t=0}^k\sum_{j=1}^n\left[\nabla g_{j,t}\left(\nabla g_{j,t}-\nabla g_{j,t-1}\right)^\intercal\right.\right.\right.\notag\\
	&\left.\left.\left.\quad-\nabla g_{j,t}^*\left(\nabla g_{j,t}^*-\nabla g_{j,t-1}^*\right)^\intercal\right]\right\|_F\right]\notag\\
	&\le\frac{1}{k}\sum_{t=1}^{k}\sum_{j=1}^n\mathbb{E}\left[\left\|\left(\nabla g_{j,t}-\nabla g_{j,t}^*\right)\left(\nabla g_{j,t}-\nabla g_{j,t-1}\right)^\intercal \right.\right.\notag\\
	&\left.\left.\quad-\nabla g_{j,t}^*\left(\nabla g_{j,t}^*-\nabla g_{j,t}+\nabla g_{j,t-1}-\nabla g_{j,t-1}^*\right)^\intercal\right\|_F\right]\notag\\
	&\le\frac{1}{k}\sum_{t=1}^{k}\sum_{j=1}^n\sqrt{\mathbb{E}\left[\left\|\nabla g_{j,t}-\nabla g_{j,t}^*\right\|^2\right]\mathbb{E}\left[\left\|\nabla g_{j,t}-\nabla g_{j,t-1}\right\|^2\right]}+\frac{1}{k}\sum_{t=1}^{k}\sum_{j=1}^n\notag\\
	&\quad \sqrt{2\mathbb{E}\left[\left\| \nabla g_{j,t}^*\right\|^2\right]\left(\mathbb{E}\left[\left\|\nabla g_{j,t}-\nabla g_{j,t}^*\right\|^2\right]+\mathbb{E}\left[\left\|\nabla g_{j,t-1}-\nabla g_{j,t-1}^*\right\|^2\right]\right)}\notag\\
	&\le\frac{1}{k}\sum_{t=1}^{k}\sum_{j=1}^n\sqrt{2c_gL^2\mathbb{E}\left[\left\|x_{j,t}-x^*\right\|^2\right]}+\frac{1}{k}\sum_{t=1}^{k}\sum_{j=1}^n\notag\\
	&\quad\sqrt{2\mathbb{E}\left[\left\| \nabla g_{j,t}^*\right\|^2\right]L^2\left(\mathbb{E}\left[\left\|x_{j,t}-x^*\right\|^2\right]+\mathbb{E}\left[\left\|x_{j,t-1}-x^*\right\|^2\right]\right)}\notag\\
	&=\frac{n}{k}\sum_{t=1}^{k}\mathcal{O}(\sqrt{\alpha_{t}})=\mathcal{O}\left(\frac{1}{k^{\alpha/2}}\right),
	\end{align}}
	where $c_g$ is defined in  (\ref{c_g}), the second inequality follows from H$\ddot{o}$lder inequality and the third inequality follows from the Lipschitz continuity of $\nabla g_j(\cdot;\zeta_j)$. Substituting (\ref{si-ie-1}) and (\ref{si-ie-2}) into (\ref{S-S bound}), we have
	\begin{equation*}
	\mathbb{E}\left[\left\|\bar{\mathbf{S}}_{k}-\mathbf{\Lambda}_k\right\|_F\right]= \mathcal{O}\left(\frac{1}{k^{\alpha/2}}\right).
	\end{equation*}
	By the same analysis in the step 3  of \cite[Theorm 3]{zhao2021confidence},
		$	\mathbb{E}\left[\left\|\mathbf{\Lambda}_k-\mathbf{S}\right\|\right]=\mathcal{O}\left(\frac{1}{k^{\alpha/2}}\right)$,
	which implies
	\begin{equation}\label{s-c}
	\mathbb{E}\left[\left\|\mathbf{S}_{i,k}-\mathbf{S}\right\|\right]=\mathcal{O}\left(\frac{1}{k^{\alpha/2}}\right).
	\end{equation}

	\textbf{Step 3.} Similar to the analysis of (\ref{s-c}), we may show that 
	\begin{equation}\label{h-c}
	\mathbb{E}\left[\left\|\mathbf{H}_{i,k}-\mathbf{H}\right\|\right]=\mathcal{O}\left(\frac{1}{k^{\alpha/2}}\right).
	\end{equation}
	 Combining (\ref{s-c}), (\ref{h-c}) with \cite[Corollary 4.3]{chen2020statistical}, we have that $$\mathbf{H}_{i,k}^{-1}\mathbf{S}_{i,k}\mathbf{H}_{i,k}^{-1}\longrightarrow\mathbf{H}^{-1}\mathbf{S}\mathbf{H}^{-1},\quad \forall i\in\mathcal{V},$$
	in probability.
The proof is complete.

\end{proof}

\section{Experimental Results}\label{num-exm}
In this section, we perform a simulation study to illustrate our theoretic findings about convergence rate and asymptotic normality of $\mathcal{S}$-$\mathcal{AB}$.  
Consider the ridge regression problem \cite[Part V]{pu2020distributed}:
\begin{equation}\label{sim-pro}
\min_{x\in\mathbb{R}^d} ~f(x)=\sum_{j=1}^n\left(\mathbb{E}\left[\left(w_j^\intercal x-v_j\right)^2\right]+\gamma\|x\|^2\right),\\
\end{equation}
where $f_i(x)\define\mathbb{E}\left[\left(w_i^\intercal x-v_i\right)^2\right]+\gamma\|x\|^2$ is the objective function of agent $i$.
%where $\gamma=1$ is a penalty parameter.
In problem (\ref{sim-pro}),
each agent $ i\in\mathcal{V}$ has access to sample $(w_i,v_i)$ given by the following linear model
$$v_i=w_i^\intercal \tilde{x}_i+\nu_i,$$
where $ w_i $ is the regression vector,  $\nu_i $ is the observation noise and $\tilde{x}_i $ is the unknown parameter. Assume that random variables $w_i $ and $\nu_i$ are independent and then the  unique solution to problem (\ref{sim-pro}) is $$x^*=\left(\sum_{j=1}^n\mathbb{E}[w_jw_j^\intercal]+n\gamma\mathbf{I}\right)^{-1}\sum_{j=1}^n\mathbb{E}[w_jw_j^\intercal]\tilde{x}_j.$$

In this experiment, the setting of parameters follows the setting in \cite[Part V]{pu2020distributed}, the setting of network topology and weighted matrices follow from \cite[Part V]{pu2020robust}. We suppose $\gamma=1$, random variables $w_i \in [1, 2]^3$ is uniformly distributed, $\nu_i$ is drawn from the Gaussian distribution $N(0,1)$ and parameter $\tilde{x}_i$ is evenly located in $[1,~10]^3$ for $\forall i\in\mathcal{V}=\{1,2,...,20\}$.  The directed graph $\mathcal{G}$ is generated by adding random links to a ring network, where a directed link exists between any two nonadjacent nodes with a probability $p=0.3$. 
For $\forall i\in \mathcal{V}$,
$\mathcal{G}_\A=\mathcal{G}_\Z=\mathcal{G}$ and
\begin{equation*}
\begin{aligned}
&\mathbf{A}_{ij}=\left\{
\begin{aligned}
&\frac{1}{|\mathcal{N}_{\mathbf{A},i}^{\text{in}}|+1},\quad j\in \mathcal{N}_{\mathbf{A},i}^{\text{in}},\\
&1-\sum_{j\in\mathcal{N}_{\mathbf{A},i}^{\text{in}}}\mathbf{A}_{ij},\quad j=i,
\end{aligned}\right.
\\
&\mathbf{B}_{ji}=\left\{
\begin{aligned}
&\frac{1}{|\mathcal{N}_{\mathbf{B},i}^{\text{out}}|+1},\quad j\in\mathcal{N}_{\mathbf{B},i}^{\text{out}},\\
&1-\sum_{j\in\mathcal{N}_{\mathbf{B},i}^{\text{out}}}\mathbf{B}_{ji},\quad j=i,
\end{aligned}\right.
\end{aligned}
\end{equation*}
where $\mathcal{N}_{\mathbf{A},i}^{\text{in}}\define\{j|(j,i)\in\mathcal{E}_\A,j\ne i\}$ and $\mathcal{N}_{\mathbf{B},i}^{\text{out}}\define\{j|(i,j)\in\mathcal{E}_\Z,j\ne i\}$ are sets of incoming and outgoing neighbors  of agent $i$ respectively, $|\mathcal{N}_{\mathbf{A},i}^{\text{in}}|$ and $|\mathcal{N}_{\mathbf{B},i}^{\text{out}}|$ are the  cardinality of $\mathcal{N}_{\mathbf{A},i}^{\text{in}}$ and $\mathcal{N}_{\mathbf{B},i}^{\text{out}}$.

%-eps-converted-to
\begin{figure}[http]
	\centering
	\includegraphics[height=2.1in,width=3.2in]{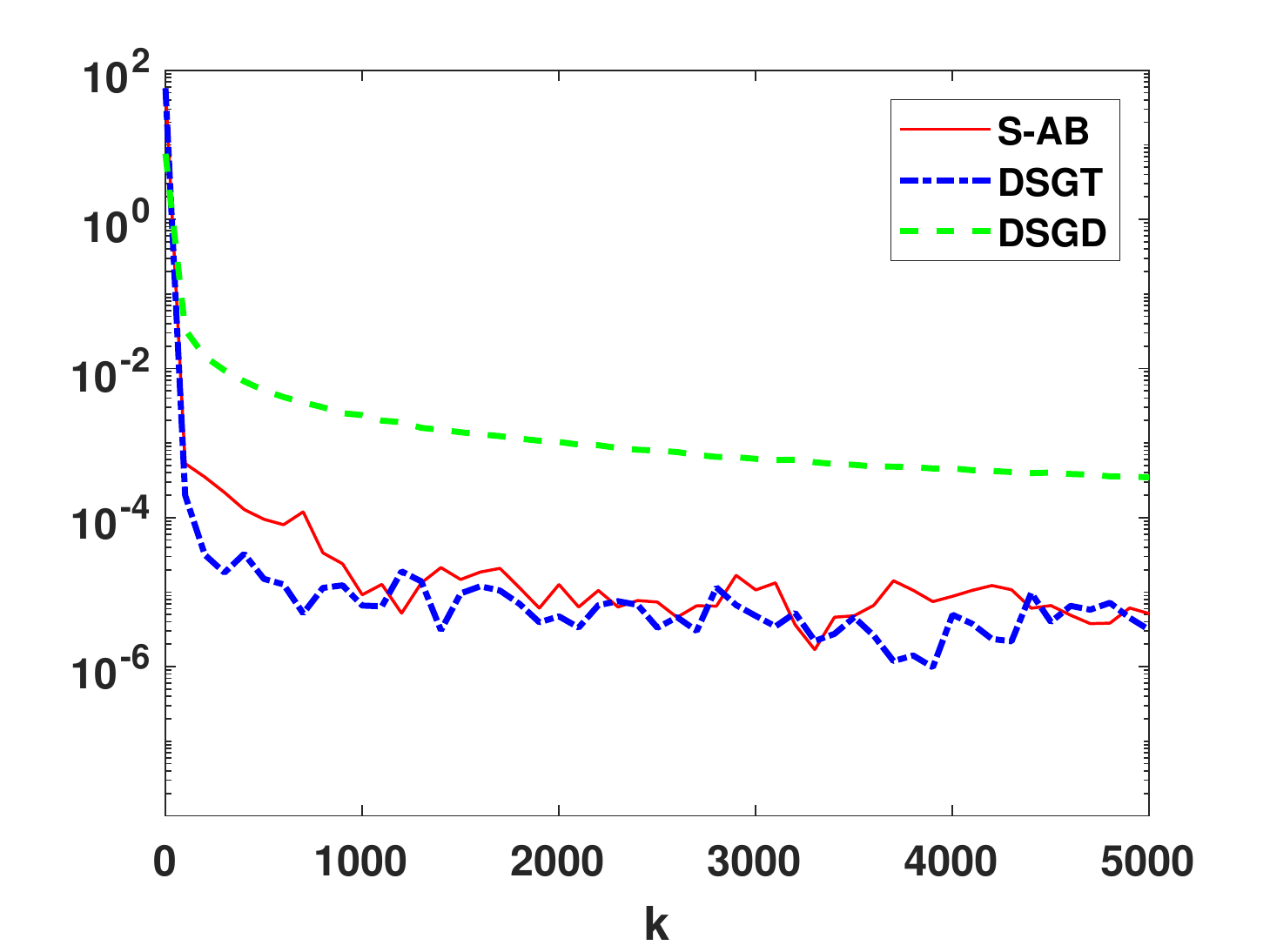}
	\caption{ Trajectories of $\frac{1}{n}\sum_{j=1}^n\|x_{j,k}-x^*\|^2$. }
	\label{fig:convergence}
\end{figure}

In Figure \ref{fig:convergence}, we report the  performances of $\mathcal{S}$-$\mathcal{AB}$, distributed stochastic gradient tracking method (DSGT) and  distributed stochastic gradient descent method (DSGD) under  the setting that step-size $\al_k=0.05/(k+1)^{0.6}$ and $x_{i,0}=\mathbf{0}$ for $\forall i\in\mathcal{V}$.  \footnote{DSGT and  DSGD employ a same doubly stochastic weight matrix, which is defined by the Metropolis rule based on the underlying graph of $\mathcal{G}$.} We run the simulations 50 times for $\mathcal{S}$-$\mathcal{AB}$, DSGT and  DSGD and average the results to approximate the expected errors after 5000 iterations,  where the solid curve, dash-dot curve and dashed curve display the average $\frac{1}{n}\sum_{i=1}^n\|x_{i,k}-x^*\|^2$ for $\mathcal{S}$-$\mathcal{AB}$, DSGT and DSGD respectively. 
From Figure \ref{fig:convergence},  we may conclude  that the   gradient tracking based method, $\mathcal{S}$-$\mathcal{AB}$ and DSGT, outperform DSGD. On the other hand,   $\mathcal{S}$-$\mathcal{AB}$ is lightly slower  than  DSGT.  However  DSGT and DSGD   require  the weight matrices are doubly stochastic and communication networks are undirected.
We also   test  $\mathcal{S}$-$\mathcal{AB}$, DSGT and  DSGD on the  hand-written classifying problem \cite[Part V]{RX2019} (MNIST dataset \cite{lecun1998mnist}) and record the  performances of the three methods in Figure \ref{fig:convergence-1}. Obviously, Figure \ref{fig:convergence-1} supports the conclusion of Figure \ref{fig:convergence}.\footnote{The step-size $\al_k=0.0001/(k+1)^{0.505}$, initial point $x_{i,0}=\mathbf{0}$ for $\forall i\in\mathcal{V}$ and     the number  of iterations $k=5000$.}

\begin{figure}[http]
	\centering
	\includegraphics[height=2.1in,width=3.2in]{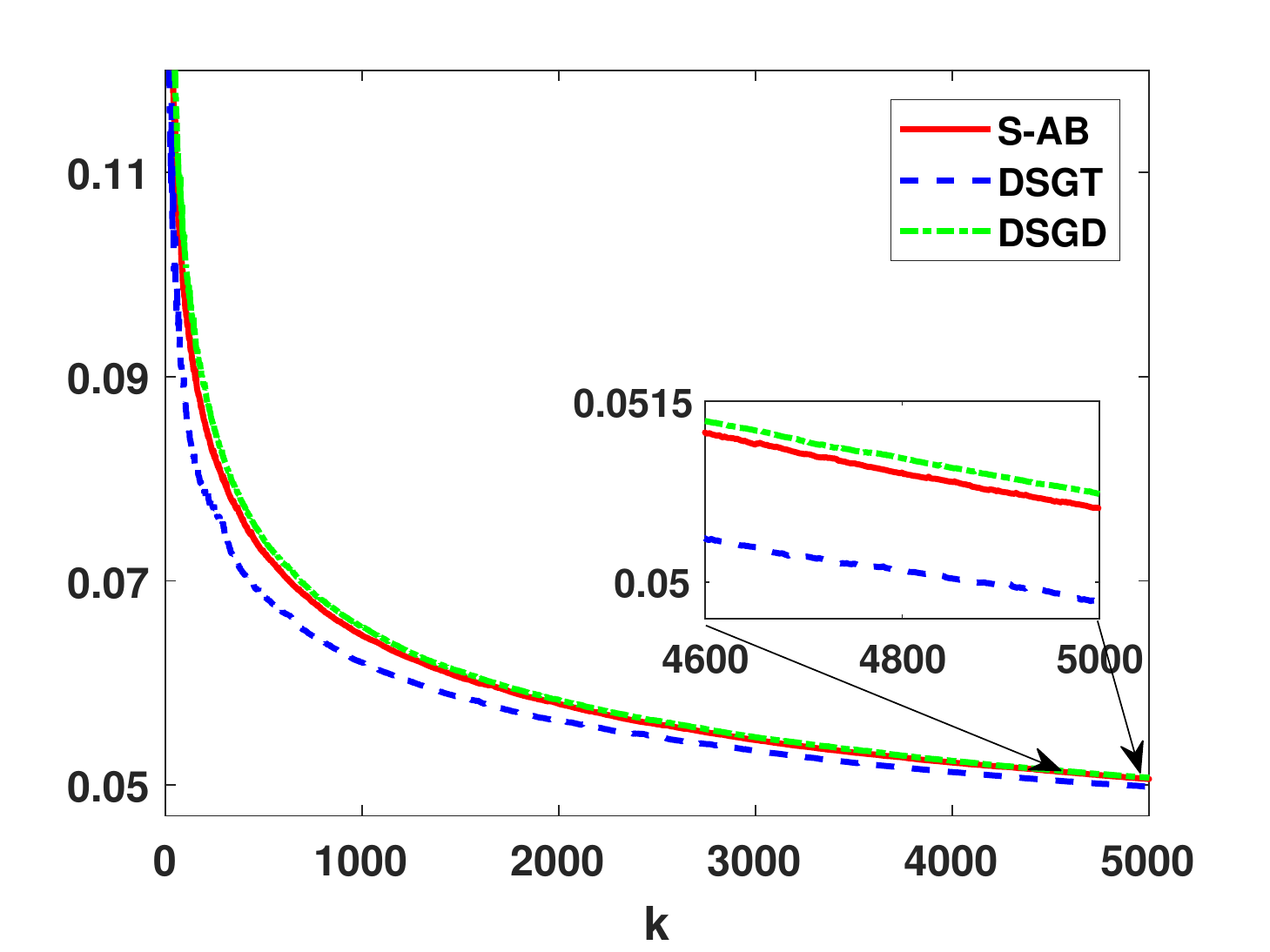}
	\caption{ Trajectories of $\frac{1}{n}\sum_{j=1}^nf(x_{j,k})$. }
	\label{fig:convergence-1}
\end{figure}

We carry out tests on the asymptotic normality of $\mathcal{S}$-$\mathcal{AB}$ algorithm for solving problem (\ref{sim-pro}).  We do 500 Monte-Carlo simulations of running $\mathcal{S}$-$\mathcal{AB}$ algorithm with 30000 iterations. In Figure \ref{fig:asym norm,effi}, the black solid curve and red circle curve denote the estimated density of agent 1 and the average of all agents respectively,  the blue  dash-dot curve denotes the true density. The simulation results shown in Figure \ref{fig:asym norm,effi} are consistent with Theorem \ref{thm:asym norm} since we can see that the estimated density of a component of normalized estimation error is close to the density of the limiting normal distribution,  which is also confirmed by a Kolmogorov–Smirnov test.

 \begin{figure}[http]
 		\begin{minipage}[t]{0.4\textwidth}
 			\centering
 			\includegraphics[height=2.1in,width=3.2in]{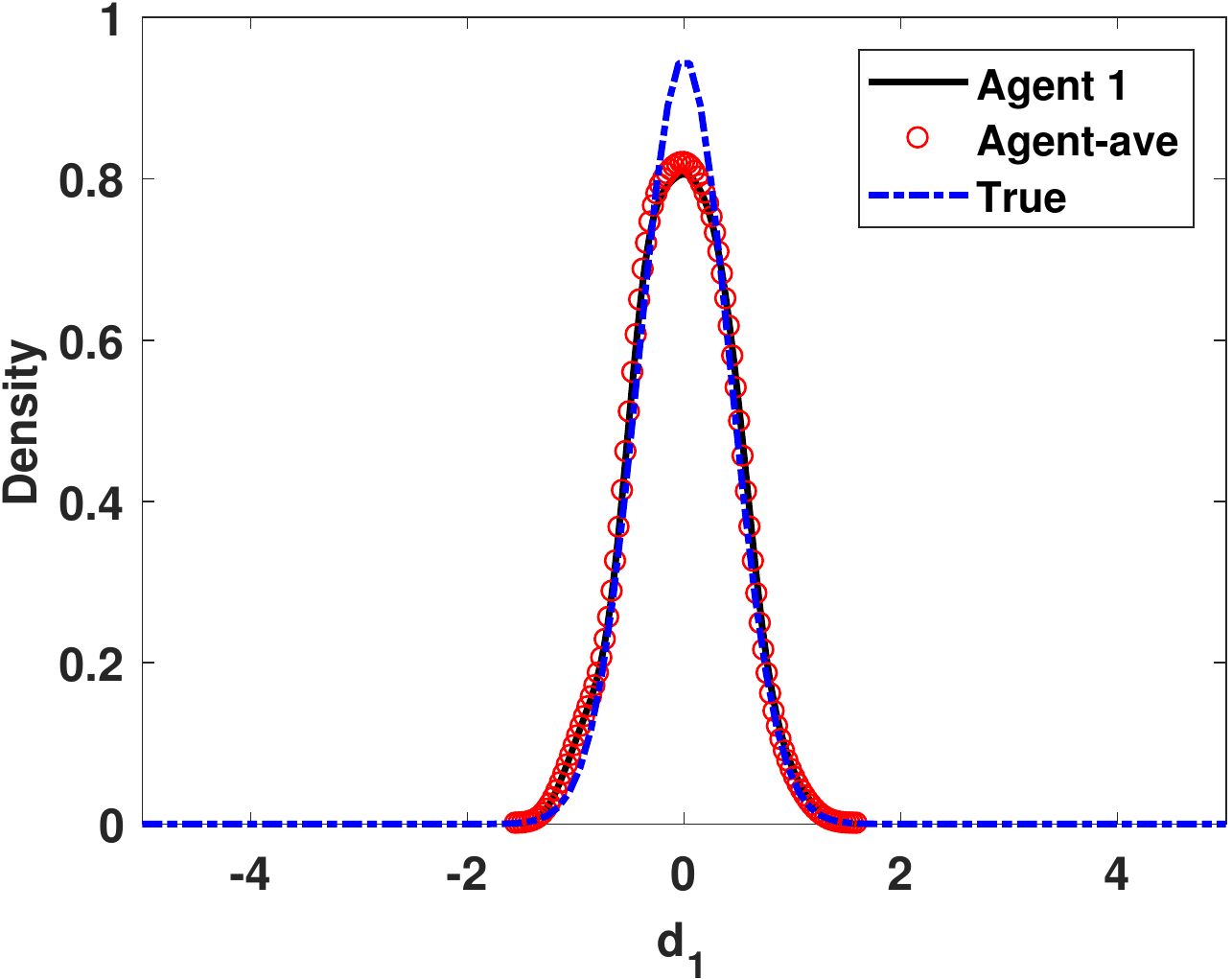}	
 			\includegraphics[height=2.1in,width=3.2in]{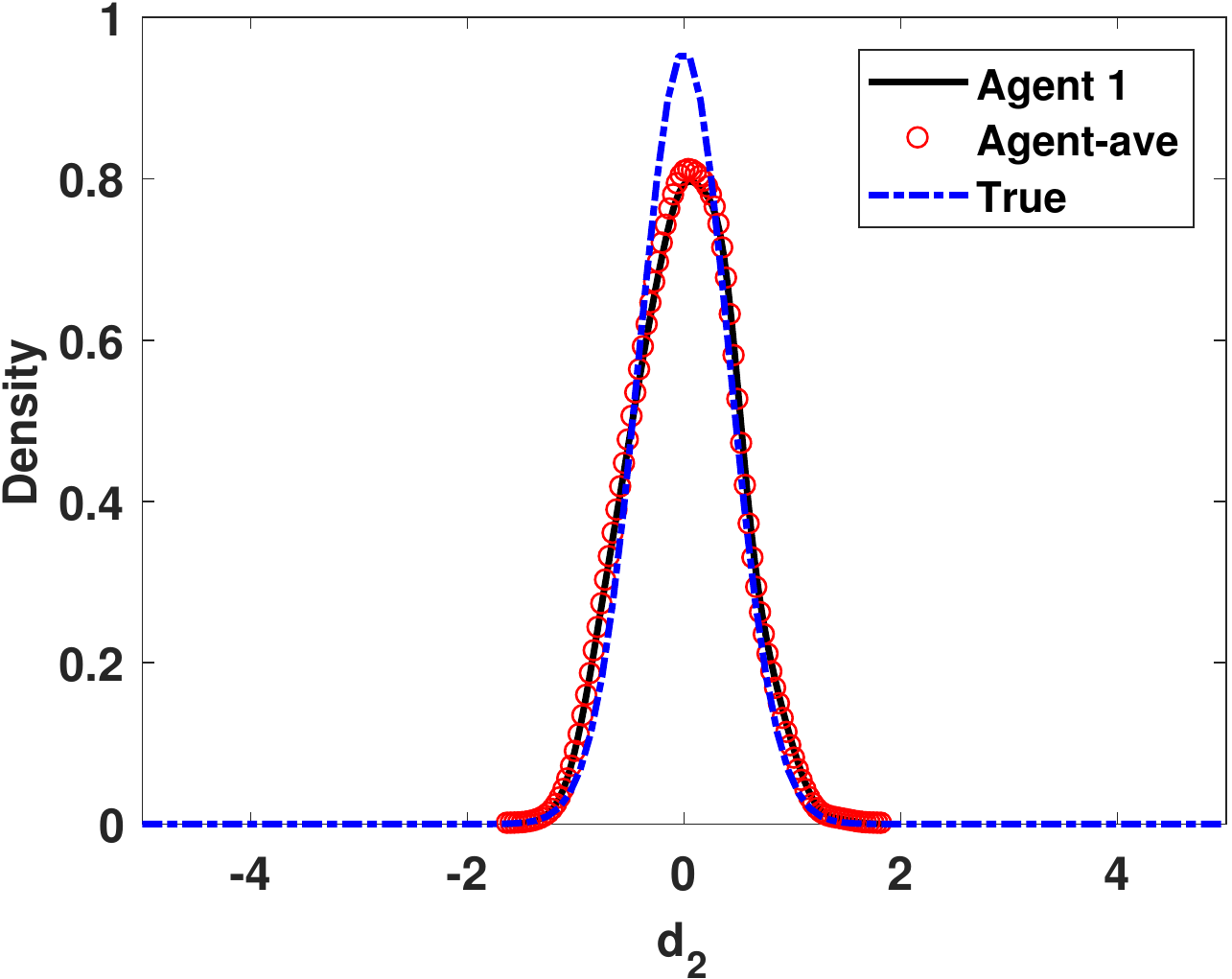}	
 			\includegraphics[height=2.1in,width=3.2in]{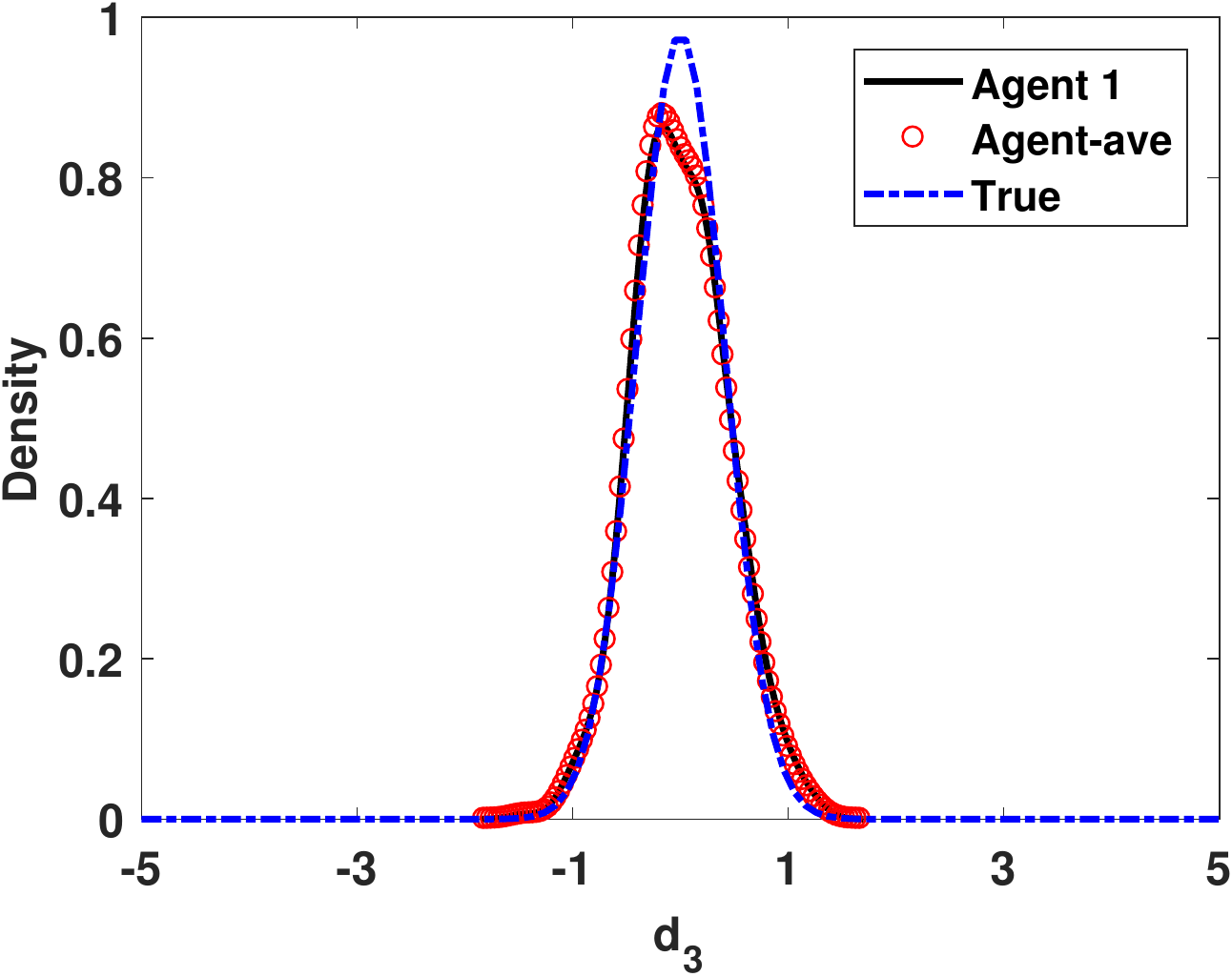}
 		\end{minipage}
 	\caption{Density of comments. }
 	\label{fig:asym norm,effi}
\end{figure}

\begin{table}[http]
	\centering
	%	\setlength{\abovecaptionskip}{0pt}
	%	\setlength{\belowcaptionskip}{10pt}
	%	\fontsize{10}{11}\selectfont
	\caption{Coverage rate($\%$).}\label{tab:cove rate}
	\begin{tabular}{l|l|l|l|l}
		\hline
		%\diagbox{\centering Iterations}{Methods}& PI& PIave& BM&BMave\\
		\diagbox{\centering {\small Methods}}{{\small Iterations}}& 2000& 5000& 15000&30000\\
		\hline
		PI&81&86.8&92&94.6\cr
		PIave&78.4&86.8&91.4&94.6\cr
		\hline
	\end{tabular}
\end{table}
Next, we employ the  asymptotic normality of $\mathcal{S}$-$\mathcal{AB}$ (Theorem \ref{thm:asym norm}) to construct the   confidence regions of the true solution to problem (\ref{sim-pro}). According to  Theorem \ref{thm:asym norm} and Theorem \ref{thm:PI-rate},
\begin{equation}\label{confidence region}
\left\{y:\left(y-\hat{x}\right)^\intercal\hat{\Sigma}^{-1}\left(y-\hat{x}\right)\le \frac{1}{k}\chi_\beta^2(d)\right\}
\end{equation}
defines an $1-\beta$  asymptotic confidence region for  the optimal solution to problem (\ref{sim-pro}), where  $\hat{\Sigma}$ is the covariance matric returned by distributed plug-in method, $\hat{x}$ is Polyak-Ruppert average of iterates generated by $\mathcal{S}$-$\mathcal{AB}$, $\chi_{\beta}^{2}(d)$ is defined to be the number that satisfies $P\left(U>\chi_{\beta}^{2}(d)\right)=\beta$ for a $\chi^{2}$ random variable $U$ with $d$ degrees of freedom.   We do 500 Monte-Carlo simulations and record the percentage of the $95\%$ confidence regions containing the true solution in Table \ref{tab:cove rate}, where the rows of PI, PIave record the results for agent 1 and the average of all agents respectively. From the Table \ref{tab:cove rate}, we can see that the coverage rates of PI and PIave at iterations $k=15000$ and $k=30000$ are both nearly $95\%$. Moreover, we can also observe the agreement of $\mathbf{H}_{i,k}^{-1}\mathbf{S}_{i,k}\mathbf{H}_{i,k}^{-1}$ for $i\in\mathcal{V}$ by comparing the performance of PI with PIave.

\section*{Acknowledgment}
The research is supported by the NSFC \#11971090 and  Fundamental Research Funds for the Central Universities   DUT22LAB301.
% if have a single appendix:
%\appendix[Proof of the Zonklar Equations]
% or
%\appendix  % for no appendix heading
% do not use \section anymore after \appendix, only \section*
% is possibly needed

% use appendices with more than one appendix
% then use \section to start each appendix
% you must declare a \section before using any
% \subsection or using \label (\appendices by itself
% starts a section numbered zero.)
%
\bibliographystyle{IEEEtran}
\bibliography{dsi_mybib}
\section{proof of Lemma \ref{lem:ieq-1}}\label{apd:proof of Lem-ieq-1}

\begin{proof} This proof mimics the proof of \cite[Lemma 3]{bianchi2013performance}.
	Set $\tilde{\gamma}_k=(1+M)\gamma_k$. Define two sequences $\{a_{k},b_k\}_{k\ge k_0}$ such that $a_{k_0}=b_{k_0}=\max\{u_{k_0},v_{k_0}\}$ and for each $k\ge k_0+1$:
	\begin{align}
	&a_{k+1}=\rho_ka_k+\tilde{\gamma}_k\sqrt{a_k(1+a_k+b_k)}\notag\\
	\label{a}&\quad+\tilde{\gamma}_k\left(\tilde{\gamma}_k+\sum_{t=k_0}^k\tilde{\gamma}_t\rho^{k-t}a_t+\sum_{t=k_0}^k\tilde{\gamma}_t\rho^{k-t}b_t\right),\\
	&b_{k+1}=b_k+Ma_k+\tilde{\gamma}_k\sqrt{a_k(1+a_k+b_k)}\notag\\
	\label{b}&\quad+\tilde{\gamma}_k\left(\tilde{\gamma}_k+\sum_{t=k_0}^k\tilde{\gamma}_t\rho^{k-t}a_t+\sum_{t=k_0}^k\tilde{\gamma}_t\rho^{k-t}b_t\right).
	\end{align}
	It is straightforward to show by induction that $u_k\le a_k$ and $v_k\le b_k$ for any $k\ge k_0$. In addition, $b_{k+1}=b_k+a_{k+1}+(M-\rho_{k})a_{k}$ . Thus for $k\ge k_0+1$,
	\begin{align*}
	b_{k+1}&=a_{k+1}+\sum_{t=k_0}^{k}(M+1-\rho_t)a_t\\
	&=(M+1)\sum_{t=k_0}^{k+1}a_t-\sum_{t=k_0}^{k}\rho_ta_t-Ma_{k+1}.
	\end{align*}
	Define $A_k=(M+1)\sum_{t=k_0}^ka_t$. The above equality implies that $a_k\le b_k\le A_k$. As a consequence Eq (\ref{a}) implies:
	{\small\begin{equation}\label{a-1}
	a_{k+1}\le\rho_ka_k+\tilde{\gamma}_k\sqrt{a_k(1+2A_k)}+\tilde{\gamma}_k\left(\tilde{\gamma}_k+2\sum_{t=k_0}^k\tilde{\gamma}_t\rho^{k-t}A_t\right).
	\end{equation}}
	As $\{A_k\}_{k\ge k_0}$ is a positive increasing sequence, for any $k\ge k_0+1$,
	{\small\begin{equation}\label{a-2}
	\begin{aligned}
	&\frac{a_{k+1}}{A_{k+1}}\\
	&\le\rho_k\frac{a_{k}}{A_{k}}+\tilde{\gamma}_k\sqrt{\frac{a_{k}}{A_{k}}\left(\frac{1}{A_{k_0}}+2\right)}+\tilde{\gamma}_k\left(\frac{\tilde{\gamma}_k}{A_{k_0}}+2\sum_{t=k_0}^k\tilde{\gamma}_t\rho^{k-t}\right)\\
	&\le \rho_k\frac{a_{k}}{A_{k}}+\tilde{\gamma}_k\sqrt{\frac{a_{k}}{A_{k}}\left(\frac{1}{A_{k_0}}+2\right)}+\tilde{\gamma}_k^2\left(\frac{1}{A_{k_0}}+2c_\rho\right),
	\end{aligned}
	\end{equation}}
	where $c_\rho$ is a constant and the second inequality follows from the fact $\sum_{t=k_0}^k\tilde{\gamma}_t\rho^{k-t}\le c_\rho \tilde{\gamma}_k$\footnote{The fact $\sum_{t=k_0}^k\tilde{\gamma}_t\rho^{k-t}\le c_\rho \tilde{\gamma}_k$ can be obtained by the similar analysis of Lemma 3 in Appendix A.}.
	Define $B^2=\frac{1}{A_{k_0}}+2(1+c_\rho)$, and $c_k=\phi_ka_k/A_k$. By (\ref{a-2}), for any $k\ge k_0+1$,
	\begin{equation}\label{a-3}
	c_{k+1}\le\rho_k\frac{\phi_{k+1}}{\phi_{k}}c_k+B\tilde{\gamma}_k\sqrt{c_k\phi_{k+1}}\sqrt{\frac{\phi_{k+1}}{\phi_{k}}}+B^2\tilde{\gamma}_k^2\phi_{k+1},
	\end{equation}
	and under the assumption 	
	\begin{align*}
	&\limsup_{k\rightarrow\infty}\left(\gamma_{k}\sqrt{\phi_k}+\frac{\phi_{k-1}}{\phi_k}\right)<\infty,\\
	&\liminf_{k\rightarrow\infty}\left(\gamma_{k}\sqrt{\phi_k}\right)^{-1}\left(\frac{\phi_{k-1}}{\phi_k}-\rho_k\right)>0,
	\end{align*}
	there exists $k_1\ge k_0$ and a constant $C$ such that for any $k\ge k_1$,
	{\small\begin{equation}\label{condi-1}
	\sqrt{\frac{\phi_{k}}{\phi_{k+1}}}BC\left(1+BC\tilde{\gamma}_k\sqrt{\phi_{k}}\right)\le \left(\frac{\phi_{k}}{\phi_{k+1}}-\rho_k\right)\left(\gamma_{k}\sqrt{\phi_{k+1}}\right)^{-1}.
	\end{equation}}
	
	Define
	\begin{equation}\label{A}
	A:=\max\{1/C,~1/C^2, ~c_{k_1}\}.
	\end{equation}
	
	We prove by induction on $k$ that $c_k\le A$ for any $k\ge k_1$. The claim holds true for $k=k_1$ by definition of $A$. Assume that $c_{k}\le A$ for some $k\ge k_1$. Using (\ref{a-3}) and (\ref{A}), for $k+1\ge k_1$,
	\begin{equation*}
	\frac{c_{k+1}}{A}\le\rho_k\frac{\phi_{k+1}}{\phi_{k}}+\frac{B}{\sqrt{A}}\tilde{\gamma}_k\sqrt{\phi_{k+1}}\sqrt{\frac{\phi_{k+1}}{\phi_{k}}}+\frac{B^2}{A}\tilde{\gamma}_k^2\phi_{k+1},
	\end{equation*}
	By (\ref{condi-1}), the right hand side is less than one so that $c_{k+1}\le A$. This proves that $\{c_k\}_{k\ge k_0}$ is a bounded sequence.
	
	We prove that $\{A_k\}_{k\ge k_0}$ is a bounded sequence. Using the fact that $\sup_{k\ge k_0}\rho_k\le 1$, $\{A_k\}_{k\ge k_0}$ is increasing and Eq. (\ref{a-1}), it holds for $k\ge k_1+1$,
	\begin{equation*}
	\begin{aligned}
	&A_{k+1}\\
	&=A_k+a_{k+1}\\
	&\le A_k+a_k+\tilde{\gamma}_k\sqrt{a_k(1+2A_k)}+\tilde{\gamma}_k\left(\tilde{\gamma}_k+2\sum_{t=k_0}^k\tilde{\gamma}_t\rho^{k-t}A_t\right)\\
	&\le A_k+a_k+\tilde{\gamma}_k\sqrt{a_kA_k\left(\frac{1}{A_{k_0}}+2\right)}\\
	&\quad+\tilde{\gamma}_k\left(\frac{\tilde{\gamma}_k}{A_k}+2\sum_{t=k_0}^k\tilde{\gamma}_t\rho^{k-t}\frac{A_t}{A_k}\right)A_k\\
	&\le A_k+a_k+\tilde{\gamma}_k\sqrt{a_kA_k\left(\frac{1}{A_{k_0}}+2\right)}+\tilde{\gamma}_k^2\left(\frac{1}{A_{k_0}}+2c_\rho\right)A_k\\
	&\le \left(1+c_k\phi_{k}^{-1}+B\tilde{\gamma}_k\sqrt{c_k\phi_k^{-1}}+B^2\tilde{\gamma}_k^2\right)A_k.
	\end{aligned}
	\end{equation*}
	
	Finally, since  $\sup_{k\ge k_0}c_k\le A$ and $1+t^2\le\exp(t^2)$, there exists $D>0$ s.t. for any $k+1\ge k_1+1$, $A_k\le \exp(D(\phi_k^{-1}+\tilde{\gamma}_k^2))A_k$, (note that under ($\limsup_{k\rightarrow\infty}(\tilde{\gamma}_k/\sqrt{\phi_{k}})\phi_k<\infty$), ). By assumptions, $\sum_{k=0}^\infty (\phi_k^{-1}+\tilde{\gamma}_k^2)<\infty$, $\{A_k\}_{k\ge k_0}$ is therefore bounded.
	
	The proof of the lemma is concluded upon noting that $v_k\le b_k\le A_k$ and $\phi_{k} u_k\le \phi_{k}a_k\le c_kA_k$.
	
\end{proof}

\section{Some necessary technical results of proof of Lemma \ref{lem:rate}}\label{apd:proof of Lem-rate}

\begin{lem}\label{lem:weighted-seq}
	Let  $\alpha_{k}=a/(k+b)^\alpha$, where $\alpha\in (1/2,1],a>0,b>0$. Then there exists $c_\rho>0$ such that $\sum_{t=0}^{k-1}\tau_\Z^{k-t}\alpha_t\le c_\rho \alpha_{k}$, where $\tau_\Z$ is defined in (\ref{consensus-para-1}).
\end{lem}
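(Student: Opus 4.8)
The plan is to reduce the assertion to a bound on $\sum_{t=0}^{k-1}\tau_\Z^{\,k-t}(\alpha_t/\alpha_k)$ that is uniform in $k$. Substituting $s=k-t$ and using $\alpha_t/\alpha_k=\big((k+b)/(k-s+b)\big)^{\alpha}$, the claim becomes: there is $c_\rho$, independent of $k$, with $\sum_{s=1}^{k}\tau_\Z^{\,s}\big((k+b)/(k-s+b)\big)^{\alpha}\le c_\rho$. Since $\alpha\in(1/2,1]$ and $(k+b)/(k-s+b)\ge 1$ whenever $1\le s\le k$, we have $x^{\alpha}\le x$ for $x=(k+b)/(k-s+b)$, so it suffices to prove $\sum_{s=1}^{k}\tau_\Z^{\,s}\,\frac{k+b}{k-s+b}\le c_\rho$. (For $k=0$ the sum is empty, and if $\tau_\Z=0$ the sum vanishes, so we may assume $k\ge 1$ and $\tau_\Z\in(0,1)$.)

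First I would split the sum at $s=\lfloor k/2\rfloor$. On the range $1\le s\le k/2$ one has $k-s+b\ge \tfrac{k}{2}+b\ge\tfrac{k+b}{2}$, hence $\frac{k+b}{k-s+b}\le 2$, and this portion is at most $2\sum_{s=1}^{\infty}\tau_\Z^{\,s}=\frac{2\tau_\Z}{1-\tau_\Z}$. On the range $k/2<s\le k$ one has $k-s+b\ge b$, so $\frac{k+b}{k-s+b}\le\frac{k+b}{b}$, while $\tau_\Z^{\,s}\le\tau_\Z^{\,\lfloor k/2\rfloor}$; this portion is therefore at most $\frac{k+b}{b}\cdot\frac{\tau_\Z^{\,\lfloor k/2\rfloor}}{1-\tau_\Z}$.

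The remaining point is that $(k+b)\,\tau_\Z^{\,\lfloor k/2\rfloor}$ is bounded uniformly over $k\ge 1$, since the exponential decay of $\tau_\Z^{\,\lfloor k/2\rfloor}$ dominates the linear growth of $k+b$; hence $c':=\sup_{k\ge 1}(k+b)\tau_\Z^{\,\lfloor k/2\rfloor}<\infty$, the second portion is at most $\frac{c'}{b(1-\tau_\Z)}$, and $c_\rho:=\frac{2\tau_\Z}{1-\tau_\Z}+\frac{c'}{b(1-\tau_\Z)}$ works. There is essentially no serious obstacle here; the only thing to keep track of is that the ratio $\alpha_t/\alpha_k$ is large (of order $(k+b)^{\alpha}$) exactly for the small-$t$ terms, but those are precisely the terms carrying the smallest geometric weights $\tau_\Z^{\,k-t}$, which is why splitting the sum according to the size of $k-t$ is the natural move.
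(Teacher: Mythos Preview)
Your proof is correct, but it proceeds by a different route than the paper. You bound the ratio $\beta_k/\alpha_k$ directly: after the change of index $s=k-t$ you dominate $\big((k+b)/(k-s+b)\big)^{\alpha}$ by the linear ratio using $\alpha\le 1$, then split the sum at $s\approx k/2$ so that the small-$s$ block has bounded coefficients and the large-$s$ block has exponentially small geometric weight beating the factor $(k+b)/b$. The paper instead observes that $\beta_k:=\sum_{t=0}^{k-1}\tau_\Z^{k-t}\alpha_t$ satisfies the one-step recursion $\beta_k=\tau_\Z\beta_{k-1}+\tau_\Z\alpha_{k-1}$, so $b_k:=\beta_k/\alpha_k$ obeys $b_k=\tau_\Z(\alpha_{k-1}/\alpha_k)(b_{k-1}+1)$; since $\alpha_{k-1}/\alpha_k\to 1$, for large $k$ one has $\tau_\Z(\alpha_{k-1}/\alpha_k)\le (1+\tau_\Z)/2<1$, and a short induction gives $b_k\le c_\rho:=\max\big\{\sup_{k\le k_0}b_k,\ (1+\tau_\Z)/(1-\tau_\Z)\big\}$. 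Your splitting argument is entirely elementary and self-contained, and makes visible exactly why the blow-up of $\alpha_t/\alpha_k$ at small $t$ is harmless; the paper's recursion-and-induction approach is shorter and, notably, uses only $\alpha_{k-1}/\alpha_k\to 1$ rather than the specific form $\alpha_k=a/(k+b)^{\alpha}$ with $\alpha\le 1$, so it transfers verbatim to more general diminishing step-sizes.
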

\begin{proof}
	Let $\beta_k=\sum_{t=0}^{k-1}\tau_\Z^{k-t}\alpha_t$, then
	\begin{equation*}
	\beta_k=\tau_\mathbf{B}\sum_{t=1}^{k-2}\tau_\mathbf{B}^{k-1-t}\alpha_t+\tau_\mathbf{B}\alpha_{k-1}=\tau_\mathbf{B} \beta_{k-1}+\tau_\mathbf{B}\alpha_{k-1}.
	\end{equation*}
	Denoting $b_k=\beta_k/\alpha_{k}$, 
	\begin{equation*}
	b_k=\tau_\mathbf{B} \frac{\alpha_{k-1}}{\alpha_{k}}b_{k-1}+\tau_\mathbf{B}\frac{\alpha_{k-1}}{\alpha_{k}}.
	\end{equation*}
	Note that $\lim_{k\rightarrow\infty}\frac{\alpha_{k-1}}{\alpha_{k}}=1$. Then there exists an integer $k_0>0$ such that $\frac{\alpha_{k-1}}{\alpha_{k}}\le \frac{1+\tau_\mathbf{B}}{2\tau_\mathbf{B}}$ for $k>k_0$. Taking
	\begin{equation*}
	c_\rho=\max\left\{\sup_{1\le k\le k_0}b_k,~\frac{1+\tau_\mathbf{B}}{1-\tau_\mathbf{B}}\right\},
	\end{equation*}
	we have  $b_k\le c_\rho$ for $k\le k_0$. Suppose
	that the claim holds for $k-1$, that is $b_{k-1}\le c_\rho$, then
	\begin{equation*}
	\begin{aligned}
	b_k=\tau_\mathbf{B} \frac{\alpha_{k-1}}{\alpha_{k}}b_{k-1}+\tau_\mathbf{B}\frac{\alpha_{k-1}}{\alpha_{k}}&\le \frac{\tau_\mathbf{B}+1}{2} (c_\rho+1)\\
	&\le \frac{c_\rho}{c_\rho+1} (c_\rho+1)=c_\rho.
	\end{aligned}
	\end{equation*}
	The proof is complete.	
\end{proof}

\begin{lem}\label{lem:noi-bound}
	Suppose that Assumptions \ref{ass:objective}-\ref{ass:matrix} hold.  Then
	\begin{itemize}
		\item [(i)] \begin{equation}\label{bound-noi}
		      \sup_{k}\mathbb{E}[\|\xi_k\|^2]\le\frac{c_b^2}{(1-\tau_\Z)^2}nU_1,
		    \end{equation}
		\item [(ii)]{\small\begin{equation}\label{cross term}
			\begin{aligned}
			&\mathbb{E}\left[\left\langle \bar{x}_{k}-x^*, \alpha_{k}\left(\frac{u^\intercal}{n}\otimes\mathbf{I}_{d}\right)\left(y_k^{'}-\frac{v}{n}\otimes \nabla f(\bar{x}_k)+\xi_k\right)\right\rangle\right]\\
			&\le \alpha_{k}\left(\mathbb{E}\left[\left\|\bar{x}_{k}-x^*\right\|^2\right]\left(2\frac{(u^\intercal vL\X)^2}{n^{3}}\mathbb{E}\left[\left\|x_k-\mathbf{1}\otimes\bar{x}_k\right\|_\A^2\right]\right.\right.\\
			&\quad\left.\left.+2\frac{\|u\|^2\X^2}{n^2}\mathbb{E}\left[\left\|y_k^{'}-v\otimes \bar{y}^{'}_k\right\|_\Z^2\right]\right)\right)^{1/2}+\alpha_{k}^2\frac{2.5\|u\|^2\X c_b^2U_1 c_\rho}{n(1-\tau_\Z)^3}\\
			&\quad+ \frac{2\|u\|^2\X\alpha_k}{n^2(1-\tau_\Z)}\sum_{t=0}^{k-1}\alpha_{t}\tau_\Z^{k-t}\bigg(\mathbb{E}\left[\X^2\|y_t^{'}-v\otimes \bar{y}^{'}_t\|_\mathbf{B}^2\right]\\
			&\quad+\frac{\|v\|^2L^2\X^2}{n}\mathbb{E}\left[\|x_t-\mathbf{1}\otimes\bar{x}_t\|_\mathbf{A}^2\right]+\frac{\|v\|^2L^2}{n^2}\mathbb{E}\left[\|\bar{x}_t-x^*\|^2\right]\bigg),
			\end{aligned}
			\end{equation}}
	\end{itemize}
	where $c_b=\max\left\{\X^2,\frac{\left\|\mathbf{B}-\mathbf{I}_{n}\right\|}{\tau_\Z}\X^2\right\}$, $\bar{c}$ is defined in (\ref{norm-bound-1}), $U_1$ is defined in Assumption \ref{ass:objective}, $c_\rho$ is defined in Lemma \ref{lem:weighted-seq}.
\end{lem}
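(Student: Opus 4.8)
The plan is to prove the two parts separately. Part~(i) is a geometric-series estimate; part~(ii) splits the inner product into a ``noise-free tracking'' piece and an ``accumulated-noise'' piece, the latter --- controlling the correlation of $\bar{x}_{k}-x^{*}$ with $\xi_{k}$, which is an autoregressive-moving-average process rather than a martingale difference --- being the real obstacle.

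For part~(i), I would unroll (\ref{auxi-seq}): from $\xi_{0}=\epsilon_{0}$ and $\xi_{k+1}=\tilde{\mathbf{B}}\xi_{k}+\epsilon_{k+1}-\epsilon_{k}$, induction gives $\xi_{k}=\epsilon_{k}+\sum_{t=0}^{k-1}\tilde{\mathbf{B}}^{\,k-1-t}(\tilde{\mathbf{B}}-\mathbf{I}_{nd})\epsilon_{t}$. Because $\mathbf{1}^{\intercal}\mathbf{B}=\mathbf{1}^{\intercal}$ and $\mathbf{B}v=v$, one has $\tilde{\mathbf{B}}^{\,j}(\tilde{\mathbf{B}}-\mathbf{I}_{nd})=(\tilde{\mathbf{B}}^{\,j}-\tfrac{v\mathbf{1}^{\intercal}}{n}\otimes\mathbf{I}_{d})(\tilde{\mathbf{B}}-\mathbf{I}_{nd})$, and $\|\tilde{\mathbf{B}}^{\,j}-\tfrac{v\mathbf{1}^{\intercal}}{n}\otimes\mathbf{I}_{d}\|_{\Z}\le\tau_{\Z}^{\,j}$ by submultiplicativity of $\|\cdot\|_{\Z}$ and (\ref{consensus-para-1}). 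Using the norm equivalence (\ref{norm-bound-1}) to pass between $\|\cdot\|$ and $\|\cdot\|_{\Z}$, each summand obeys $\|\tilde{\mathbf{B}}^{\,k-1-t}(\tilde{\mathbf{B}}-\mathbf{I}_{nd})\epsilon_{t}\|\le c_{b}\tau_{\Z}^{\,k-t}\|\epsilon_{t}\|$ with $c_{b}$ as in the statement, so $\|\xi_{k}\|\le c_{b}\sum_{t=0}^{k}\tau_{\Z}^{\,k-t}\|\epsilon_{t}\|$. The weighted Cauchy--Schwarz inequality splits this geometric sum, and $\mathbb{E}[\|\epsilon_{t}\|^{2}]=\sum_{i}\mathbb{E}[\|\epsilon_{i,t}\|^{2}]\le nU_{1}$ (Jensen's inequality on Assumption~\ref{ass:objective}(iii), valid since $p>2$) together with summing the two geometric series gives $\sup_{k}\mathbb{E}[\|\xi_{k}\|^{2}]\le\frac{c_{b}^{2}}{(1-\tau_{\Z})^{2}}nU_{1}$.

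For part~(ii), split the inner product. In the noise-free part, $y_{k}^{'}-\tfrac{v}{n}\otimes\nabla f(\bar{x}_{k})=(y_{k}^{'}-v\otimes\bar{y}_{k}^{'})+v\otimes(\bar{y}_{k}^{'}-\tfrac1n\nabla f(\bar{x}_{k}))$, the last summand being bounded by $\tfrac{\|v\|L\X}{\sqrt{n}}\|x_{k}-\mathbf{1}\otimes\bar{x}_{k}\|_{\A}$ via $\bar{y}_{k}^{'}=\tfrac1n\sum_{i}\nabla f_{i}(x_{i,k})$ and (\ref{ie-2}); Cauchy--Schwarz in expectation together with $(a+b)^{2}\le 2a^{2}+2b^{2}$ then gives the first term on the right-hand side of (\ref{cross term}). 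For the accumulated-noise part $-2\mathbb{E}[\langle\bar{x}_{k}-x^{*},\alpha_{k}(\tfrac{u^{\intercal}}{n}\otimes\mathbf{I}_{d})\xi_{k}\rangle]$, substitute the unrolling of part~(i) and treat each increment $\epsilon_{t}$ separately. The $\epsilon_{k}$ term vanishes in expectation since $\bar{x}_{k}-x^{*}$ is $\mathcal{F}_{k}$-measurable and $\mathbb{E}[\epsilon_{k}\mid\mathcal{F}_{k}]=\mathbf{0}$. For $t<k$, write $\bar{x}_{k}-x^{*}=(\bar{x}_{t}-x^{*})+(\bar{x}_{k}-\bar{x}_{t})$; the first piece is $\mathcal{F}_{t}$-measurable, hence orthogonal to $\epsilon_{t}$, and for the second I iterate (\ref{consensus-new-form}) to write $\bar{x}_{k}-\bar{x}_{t}=-\sum_{s=t}^{k-1}\alpha_{s}(\tfrac{u^{\intercal}}{n}\otimes\mathbf{I}_{d})(y_{s}^{'}+\xi_{s})$. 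Here $y_{t}^{'}$ and the $\mathcal{F}_{s}$-measurable part $\xi_{s}-\epsilon_{s}$ are again orthogonal to $\epsilon_{t}$, so the noise channel leaves only a quadratic-in-$\epsilon_{t}$ term of order $\alpha_{k}\alpha_{s}\tau_{\Z}^{\,k-t}\tau_{\Z}^{\,s-t}\mathbb{E}[\|\epsilon_{t}\|^{2}]$, which --- summed over $s$, then over $t$ by Lemma~\ref{lem:weighted-seq} ($\sum_{t}\tau_{\Z}^{\,k-t}\alpha_{t}\le c_{\rho}\alpha_{k}$) --- yields the $\alpha_{k}^{2}\frac{2.5\|u\|^{2}\X c_{b}^{2}U_{1}c_{\rho}}{n(1-\tau_{\Z})^{3}}$ term; while the $y_{s}^{'}$ channel for $s>t$ is handled by Cauchy--Schwarz and, using (\ref{ie-1})--(\ref{ie-2}) and Lemma~\ref{lem:weighted-seq} once more, re-expressed as the $\alpha_{t}$-weighted, $\tau_{\Z}^{\,k-t}$-damped sum of $\|y_{t}^{'}-v\otimes\bar{y}_{t}^{'}\|_{\Z}^{2}$, $\|x_{t}-\mathbf{1}\otimes\bar{x}_{t}\|_{\A}^{2}$ and $\|\bar{x}_{t}-x^{*}\|^{2}$ of (\ref{cross term}).

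The hard part is the $y_{s}^{'}$ channel for $s>t$: the correlation of $\bar{x}_{k}-x^{*}$ with a past increment $\epsilon_{t}$, propagated through the Lipschitz gradient terms as well as the explicit noise terms of the $\bar{x}$-recursion, must be shown to decay geometrically in $k-t$ \emph{and} to carry an extra factor $\alpha_{t}$ (present because $\epsilon_{t}$ enters the $x$-recursion at step $t$ multiplied by $\alpha_{t}$); it is precisely the interplay of the $\tilde{\mathbf{B}}$-power decay $\tau_{\Z}^{\,k-t}$ with $\sum_{t}\tau_{\Z}^{\,k-t}\alpha_{t}\le c_{\rho}\alpha_{k}$ that forces every contribution to be $\mathcal{O}(\alpha_{k}^{2})$, matching (\ref{cross term}). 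Part~(i) is used throughout the accumulated-noise estimate to bound the $\xi$-dependent second moments that arise.
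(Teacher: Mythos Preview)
Your plan for part~(i) is correct and matches the paper's argument essentially line for line.

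For part~(ii) your route is workable but more intricate than the paper's. The paper does not split at time $t$ and does not separate $y_{s}=y_{s}'+\xi_{s}$ inside the noise estimate. Instead it unrolls $\bar{x}_{k}-x^{*}$ all the way back,
\[
\bar{x}_{k}-x^{*}=\bar{x}_{0}-x^{*}-\sum_{t_{1}=0}^{k-1}\alpha_{t_{1}}\Bigl(\tfrac{u^{\intercal}}{n}\otimes\mathbf{I}_{d}\Bigr)y_{t_{1}},
\]
and pairs this directly with $\xi_{k}=\sum_{t_{2}=0}^{k}\tilde{\mathbf{B}}(k,t_{2})\epsilon_{t_{2}}$. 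Martingale orthogonality kills $\langle\bar{x}_{0}-x^{*},\epsilon_{t_{2}}\rangle$ and all $\langle y_{t_{1}},\epsilon_{t_{2}}\rangle$ with $t_{1}<t_{2}$, leaving only $t_{2}\le t_{1}$; those are bounded by the crude $\mathbb{E}[\|y_{t_{1}}\|\,\|\epsilon_{t_{2}}\|]\le\frac{1}{2}(\mathbb{E}\|y_{t_{1}}\|^{2}+nU_{1})$, after which $\sum_{t_{2}\le t_{1}}\tau_{\Z}^{\,k-t_{2}}\le\tau_{\Z}^{\,k-t_{1}}/(1-\tau_{\Z})$ produces the $\alpha_{t_{1}}\tau_{\Z}^{\,k-t_{1}}$ structure, and only \emph{then} is $\mathbb{E}\|y_{t_{1}}\|^{2}$ decomposed via $y_{t_{1}}=y_{t_{1}}'+\xi_{t_{1}}$ and part~(i). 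This sidesteps entirely the ``hard part'' you flag: by keeping $y_{t_{1}}$ intact, the Lipschitz propagation of $\epsilon_{t}$ through $y_{s}'$ for $s>t$ never needs to be tracked, and no extra $\alpha_{t}$ has to be extracted from anywhere---it is already present as $\alpha_{t_{1}}$. After swapping the order of summation your double sum coincides with the paper's, so the approaches are equivalent; the paper's ordering is simply more economical.

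One slip to fix: ``the $\mathcal{F}_{s}$-measurable part $\xi_{s}-\epsilon_{s}$ is orthogonal to $\epsilon_{t}$'' is false for $s>t$, since $\xi_{s}-\epsilon_{s}=\sum_{r<s}\tilde{\mathbf{B}}^{\,s-1-r}(\tilde{\mathbf{B}}-\mathbf{I}_{nd})\epsilon_{r}$ contains the $r=t$ summand. What you need (and what your $\tau_{\Z}^{\,s-t}$ factor already reflects) is that every $\epsilon_{r}$ with $r\neq t$ in the expansion of $\xi_{s}$ is orthogonal to $\epsilon_{t}$; the surviving piece is exactly the $\tilde{\mathbf{B}}(s,t)\epsilon_{t}$ term.
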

\begin{proof}
	By the definition of $\xi_k$, we have 
	\begin{equation*}
	\xi_k=\sum_{t=0}^{k-1}\tilde{\mathbf{B}}^{k-1-t}(\tilde{\mathbf{B}}-\mathbf{I}_{nd})\epsilon_t+\epsilon_{k}=\sum_{t=0}^{k}\tilde{\mathbf{B}}(k,t)\epsilon_t,
	\end{equation*}
	where
	\begin{equation}\label{mat-B}
	\begin{aligned}
	&\tilde{\mathbf{B}}(k,t)\define \tilde{\mathbf{B}}^{k-1-t}(\tilde{\mathbf{B}}-\mathbf{I}_{nd})~(t<k),\\
	&\tilde{\mathbf{B}}(k,k-1)\define\tilde{\mathbf{B}}-\mathbf{I}_{nd},\quad\tilde{\mathbf{B}}(k,k)\define\mathbf{I}_{nd}.
	\end{aligned}
	\end{equation}
	Then we have
	\begin{equation*}
	\begin{aligned}
	&\mathbb{E}[\|\xi_k\|^2]\\
	&\le \X^2\mathbb{E}[\|\xi_k\|_\Z^2]\\
	&\le\X^2\sum_{t_1=0}^{k}\sum_{t_2=0}^{k}\left\|	\tilde{\mathbf{B}}(k,t_1)\right\|_\Z\left\|	\tilde{\mathbf{B}}(k,t_2)\right\|_\Z\mathbb{E}\left[\|\epsilon_{t_1}\|_\Z\|\epsilon_{t_2}\|_\Z\right]\\
	&\le\X^4\sum_{t_1=0}^{k}\sum_{t_2=0}^{k}\left\|	\tilde{\mathbf{B}}(k,t_1)\right\|_\Z\left\|	\tilde{\mathbf{B}}(k,t_2)\right\|_\Z\mathbb{E}\left[\|\epsilon_{t_1}\|\|\epsilon_{t_2}\|\right].
	\end{aligned}
	\end{equation*}
	By Assumption \ref{ass:matrix},
	\begin{equation}\label{mat-dimish}
	\begin{aligned}
	\left\|\tilde{\mathbf{B}}^{k}\left(\tilde{\mathbf{B}}-\mathbf{I}_{nd}\right)\right\|_\Z&=\left\|\left(\tilde{\mathbf{B}}-\frac{v\mathbf{1}^\intercal}{n}\otimes \mathbf{I}_d\right)\tilde{\mathbf{B}}^{k-1}\left(\tilde{\mathbf{B}}-\mathbf{I}_{nd}\right)\right\|_\Z\\
	&\le\tau_\Z \left\|\tilde{\mathbf{B}}^{k-1}\left(\tilde{\mathbf{B}}-\mathbf{I}_{nd}\right)\right\|_\Z\\
	&\le\cdots\le \tau_\Z^k \left\|\tilde{\mathbf{B}}-\mathbf{I}_{nd}\right\|_\Z,
	\end{aligned}
	\end{equation}
	where $\tau_\mathbf{B}\define\left\|\tilde{\mathbf{B}}-\frac{v\mathbf{1}^\intercal}{n}\otimes \mathbf{I}_d\right\|_\mathbf{B}$.
	%	Denoting $c_b=\max\left\{1,\frac{\left\|\mathbf{B}-\mathbf{I}_{n}\right\|}{\tau_\Z}\right\}$,
	%	\begin{equation}\label{mat-cov-rate}
	%	\left\|\tilde{\mathbf{B}}(k,t)\right\|
	%	\le c_b \tau_\Z^{k-t}.
	%	\end{equation}
	Denoting $c_b=\max\left\{\X^2,\frac{\left\|\mathbf{B}-\mathbf{I}_{n}\right\|}{\tau_\Z}\X^2\right\}$, we have
	\begin{equation*}
	\|\tilde{\mathbf{B}}(k,t)\|_\Z\le
	\tau_\Z^{k-t}\max\left\{1,  \frac{\left\|\tilde{\mathbf{B}}-\mathbf{I}_{nd}\right\|_\Z}{\tau_\Z}\right\}\le \tau_\Z^{k-t} \frac{c_b}{\X^2},
	\end{equation*}
	and then
	\begin{equation}\label{ieq-5}
	\begin{aligned}
	\mathbb{E}[\|\xi_k\|^2]
	&\le c_b^2\sum_{t_1=0}^{k}\sum_{t_2=0}^{k}\tau_\Z^{2k-t_1-t_2}\mathbb{E}\left[\|\epsilon_{t_1}\|\|\epsilon_{t_2}\|\right]\\
	&\le c_b^2\sum_{t_1=0}^{k}\sum_{t_2=0}^{k}\tau_\Z^{2k-t_1-t_2}\frac{\mathbb{E}\left[\|\epsilon_{t_1}\|^2+\|\epsilon_{t_2}\|^2\right]}{2}\\
	&\le \frac{c_b^2}{(1-\tau_\Z)^2}nU_1,
	\end{aligned}
	\end{equation}
	where  the last inequality follows from Assumption \ref{ass:objective} (iii) and the fact $\tau_\Z<1$. Then (\ref{bound-noi})	holds.
	
	Next, we show (\ref{cross term}). Obviously,
	\begin{equation}\label{2-momon-0}
	\begin{aligned}
	&\mathbb{E}\left[\left\langle \bar{x}_{k}-x^*, \alpha_{k}\left(\frac{u^\intercal}{n}\otimes\mathbf{I}_{d}\right)\left(y_k^{'}-\frac{v}{n}\otimes \nabla f(\bar{x}_k)+\xi_k\right)\right\rangle\right]\\
	&= \mathbb{E}\left[\left\langle \bar{x}_{k}-x^*, \alpha_{k}\left(\frac{u^\intercal}{n}\otimes\mathbf{I}_{d}\right)\left(y_k^{'}-\frac{v}{n}\otimes \nabla f(\bar{x}_k)\right)\right\rangle\right]\\
	&\quad+\mathbb{E}\left[\left\langle \bar{x}_{k}-x^*, \alpha_{k}\left(\frac{u^\intercal}{n}\otimes\mathbf{I}_{d}\right)\xi_k\right\rangle\right].
	\end{aligned}
	\end{equation}	
	For the first term on the right hand side of (\ref{2-momon-0}),
	{\small\begin{equation}\label{2-momon-1}
		\begin{aligned}
		&\mathbb{E}\left[\left\langle \bar{x}_{k}-x^*, \alpha_{k}\left(\frac{u^\intercal}{n}\otimes\mathbf{I}_{d}\right)\left(y_k^{'}-\frac{v}{n}\otimes \nabla f(\bar{x}_k)\right)\right\rangle\right]\\
		&\le
		\mathbb{E}\left[\left\|\bar{x}_{k}-x^*\right\|\left(\left\|\frac{\alpha_{k}u^\intercal v}{n}\left(\bar{y}^{'}_k-\frac{1}{n}\nabla f(\bar{x}_k)\right)\right\|\right.\right.\\
		&\left.\left.+\left\|\alpha_{k}\left(\frac{u^\intercal}{n}\otimes\mathbf{I}_{d}\right)\left(y_k^{'}-v\otimes \bar{y}^{'}_k\right)\right\|\right)\right]\\
		&\le \mathbb{E}\left[\left\|\bar{x}_{k}-x^*\right\|\left(\frac{\alpha_{k}u^\intercal vL}{n^{1.5}}\left\|x_k-\mathbf{1}\otimes\bar{x}_k\right\|\right.\right.\\
		&\quad \left.\left.+\frac{\alpha_{k}\|u\|}{n}\left\|y_k^{'}-v\otimes \bar{y}^{'}_k\right\|\right)\right]\\
		&\le  \alpha_{k}\left(\mathbb{E}\left[\left\|\bar{x}_{k}-x^*\right\|^2\right]\left(2\frac{(u^\intercal vL\X)^2}{n^{3}}\mathbb{E}\left[\left\|x_k-\mathbf{1}\otimes\bar{x}_k\right\|_\A^2\right]\right.\right.\\
		&\quad\left.\left.+2\frac{\|u\|^2\X^2}{n^2}\mathbb{E}\left[\left\|y_k^{'}-v\otimes \bar{y}^{'}_k\right\|_\Z^2\right]\right)\right)^{1/2},
		\end{aligned}
		\end{equation}}
	where the first inequality follows from the facts $\bar{y}^{'}_k=\frac{1}{n}\sum_{j=1}^n\nabla f_j(x_{i,k})$ and
	\begin{equation*}
	\left\|\frac{1}{n}\sum_{j=1}^n\nabla f_j(x_i)-\frac{1}{n}\nabla f(y)\right\|^2\le \frac{L^2}{n}\sum_{j=1}^n\|x_i-y\|^2.
	\end{equation*}	
  It is left to estimate the upper bound of the second term on the right hand side of (\ref{2-momon-0}). By the formula (\ref{consensus-new-form}),
	\begin{equation}
	\begin{aligned}
	\bar{x}_{k}-x^*
	&=\bar{x}_{k-1}-x^*-\alpha_{k-1}\left(\frac{u^\intercal}{n}\otimes\mathbf{I}_{d}\right)y_{k-1}\\
	&=\bar{x}_{0}-x^*-\sum_{t=0}^{k-1}\alpha_t\left(\frac{u^\intercal}{n}\otimes\mathbf{I}_{d}\right)y_t.
	\end{aligned}
	\end{equation}
	Then for the second term on the right hand side of (\ref{2-momon-0}), 
	{\small\begin{equation*}
		\begin{aligned}
		&\mathbb{E}\left[\left\langle \bar{x}_{k}-x^*, \alpha_{k}\left(\frac{u^\intercal}{n}\otimes\mathbf{I}_{d}\right)\xi_k\right\rangle\right]\\
		&=\alpha_{k}\mathbb{E}\left[\left\langle -\sum_{t=0}^{k-1}\alpha_t\left(\frac{u^\intercal}{n}\otimes\mathbf{I}_{d}\right)y_t, \left(\frac{u^\intercal}{n}\otimes\mathbf{I}_{d}\right)\sum_{t=0}^{k}\tilde{\mathbf{B}}(k,t)\epsilon_t\right\rangle\right]\\
		&=\alpha_{k}\sum_{t_1=0}^{k-1}\sum_{t_2=0}^{t_1}\mathbb{E}\left[\left\langle-\alpha_{t_1}\left(\frac{u^\intercal}{n}\otimes\mathbf{I}_{d}\right)y_{t_1}, \left(\frac{u^\intercal}{n}\otimes\mathbf{I}_{d}\right)\tilde{\mathbf{B}}(k,t_2)\epsilon_{t_2}\right\rangle\right]\\
		&\le \alpha_{k}\sum_{t_1=0}^{k-1}\sum_{t_2=0}^{t_1}\alpha_{t_1}\frac{\|u\|^2\X}{n^2}\left\|\tilde{\mathbf{B}}(k,t_2)\right\|_\Z\left(\mathbb{E}\left[\|y_{t_1}\|\| \epsilon_{t_2}\|\right]\right)\\
		&\le \alpha_{k}\sum_{t_1=0}^{k-1}\sum_{t_2=0}^{t_1}\alpha_{t_1}\frac{\|u\|^2\X}{2n^2}\tau_\Z^{k-t_2}\left(\mathbb{E}\left[\|y_{t_1}\|^2\right]+nU_1\right),
		\end{aligned}
		\end{equation*}}
	where $\bar{c}$ is defined in (\ref{norm-bound-1}), the first and second equalities follow from the facts 
	\begin{align*}
		&\mathbb{E}\left[\left\langle\bar{x}_{0}-x^*, \left(\frac{u^\intercal}{n}\otimes\mathbf{I}_{d}\right)\tilde{\mathbf{B}}(k,t)\epsilon_t\right\rangle\bigg|\mathcal{F}_t\right]\\
		&=\left\langle\bar{x}_{0}-x^*,\mathbb{E}\left[ \left(\frac{u^\intercal}{n}\otimes\mathbf{I}_{d}\right)\tilde{\mathbf{B}}(k,t)\epsilon_t\bigg|\mathcal{F}_t\right]\right\rangle\\
		&=\left\langle\bar{x}_{0}-x^*,\mathbf{0}\right\rangle=0
		\end{align*}
		and
		\begin{equation*}
		\mathbb{E}\left[\left\langle y_{t_1}, \epsilon_{t_2}\right\rangle\big|\mathcal{F}_{t_2}\right]=\left\langle y_{t_1},\mathbb{E}\left[ \epsilon_{t_2}\big|\mathcal{F}_{t_2}\right]\right\rangle=\left\langle y_{t_1},\mathbf{0}\right\rangle=0\quad (t_1<t_2)
		\end{equation*}
	respectively. Therefore, 
	\begin{equation}\label{2-momon-2}
	\begin{aligned}
	&\mathbb{E}\left[\left\langle \bar{x}_{k}-x^*, \alpha_{k}\left(\frac{u^\intercal}{n}\otimes\mathbf{I}_{d}\right)\xi_k\right\rangle\right]\\
	&\le \alpha_{k}\frac{\|u\|^2\X}{2n^2(1-\tau_\Z)}\sum_{t=0}^{k-1}\alpha_{t}\tau_\Z^{k-t}\mathbb{E}\left[\|y_{t}\|^2\right]+\alpha_{k}^2\frac{\|u\|^2\X U_1 c_\rho}{2n(1-\tau_\Z)}\\
	&\le  \alpha_{k}\frac{2\|u\|^2\X}{n^2(1-\tau_\Z)}\sum_{t=0}^{k-1}\alpha_{t}\tau_\Z^{k-t}\left(\mathbb{E}\left[\|y_t^{'}-v\otimes \bar{y}^{'}_t\|^2\right]\right.\\
	&\quad+\mathbb{E}\left[\left\|v\otimes \left(\bar{y}^{'}_t-\frac{1}{n}\nabla f(\bar{x}_t)\right)\right\|^2\right]+\|v\|^2\mathbb{E}\left[\left\|\frac{1}{n}\nabla f(\bar{x}_t)\right\|^2\right]\\
	&\quad\left.+\mathbb{E}\left[\|\xi_t\|^2\right]\right)+\alpha_{k}^2\frac{ \|u\|^2\X U_1 c_\rho}{2n(1-\tau_\Z)}\\
	&\le  \alpha_{k}\frac{2\|u\|^2\X}{n^2(1-\tau_\Z)}\sum_{t=0}^{k-1}\alpha_{t}\tau_\Z^{k-t}\left(\mathbb{E}\left[\X^2\|y_t^{'}-v\otimes \bar{y}^{'}_t\|_\mathbf{B}^2\right]\right.\\
	&\quad\left.+\frac{\|v\|^2L^2}{n}\X^2\mathbb{E}\left[\|x_t-\mathbf{1}\otimes\bar{x}_t\|_\mathbf{A}^2\right]+\|v\|^2L^2\mathbb{E}\left[\|\bar{x}_t-x^*\|^2\right]\right)\\
	&\quad+\alpha_{k}^2\frac{\|u\|^2\X U_1 c_\rho}{n(1-\tau_\Z)}\left(\frac{1}{2}+\frac{2c_b^2}{(1-\tau_\Z)^2}\right)\\
	&\le  \alpha_{k}\frac{2\|u\|^2\X}{n^2(1-\tau_\Z)}\sum_{t=0}^{k-1}\alpha_{t}\tau_\Z^{k-t}\left(\mathbb{E}\left[\X^2\|y_t^{'}-v\otimes \bar{y}^{'}_t\|_\mathbf{B}^2\right]\right.\\
	&\quad\left.+\frac{\|v\|^2L^2}{n}\X^2\mathbb{E}\left[\|x_t-\mathbf{1}\otimes\bar{x}_t\|_\mathbf{A}^2\right]+\|v\|^2L^2\mathbb{E}\left[\|\bar{x}_t-x^*\|^2\right]\right)\\
	&\quad+\alpha_{k}^2\frac{2.5\|u\|^2\X c_b^2U_1 c_\rho}{n(1-\tau_\Z)^3},
	\end{aligned}
	\end{equation}
	where the first inequality follows from Lemma \ref{lem:weighted-seq}, the third inequality follows from 
	\begin{equation*}
	\mathbb{E}\left[\left\|\bar{y}^{'}_k-\frac{1}{n}\nabla f(\bar{x}_k)\right\|^2\right]\le \frac{L^2\X^2}{n}\mathbb{E}\left[\left\|x_k-\mathbf{1}\otimes\bar{x}_k\right\|_\mathbf{A}^2\right]
	\end{equation*}
	and
	\begin{equation*}
	\left\|\nabla f(\bar{x}_k)\right\|=\left\|\nabla f(\bar{x}_k)-\nabla f(x^*)\right\|\le nL \left\|\bar{x}_k-x^*\right\|,
	\end{equation*}
	the fourth inequality follows from the fact $1<\frac{c_b^2}{(1-\tau_\Z)^2}$. Summarizing (\ref{2-momon-0}), (\ref{2-momon-1}) and (\ref{2-momon-2}), we have (\ref{cross term}). The proof is complete.
\end{proof}

\section{Some necessary technical results of proof of Theorem \ref{thm:asym norm}}\label{apd:thm-asym-norm}
\begin{lem}[{\cite[Theorem 3.4.2]{chen2006stochastic}}]\label{lem:asym-norm}
	We introduce asymptotic properties of the sequence  generated by the following recursion:
	\begin{equation}\label{norm form}
	\Delta_{k+1}=\Delta_{k}+\alpha_kh(\Delta_{k})+\alpha_k \left(\mu_k+\eta_k\right).
	\end{equation}
	We need the following conditions.
	\begin{itemize}
		\item [(C0)] $\alpha_{k}>0$, $\alpha_{k}$ nonincreasingly converges to zero, $\sum_{k=0}^{\infty}\alpha_{k}<\infty$,
		$$\alpha_{k}k\longrightarrow \infty,\quad \frac{1}{\alpha_{k+1}}-\frac{1}{\alpha_{k}}\longrightarrow 0,$$
		and for some $\delta\in (0,1)$
		\begin{equation}\label{para}
		\sum_{k=0}^{\infty}\frac{\alpha_{k}^{(1+\delta)/2}}{\sqrt{k+1}}.
		\end{equation}
		\item [(C1)]There exists a continuously differentiable function $V(\cdot)$ such that
		\begin{equation*}
		h(x)^\intercal \nabla V(x)< 0, \quad \forall x\neq 0.
		\end{equation*}
		\item [(C2)]$h(\cdot)$ is measurable and locally bounded, and there exists a stable matrix $\mathbf{G}$ and a constant $c_1>0$ such that
		\begin{equation*}
		\left\|h(x)-\mathbf{G}x\right\|\le c_1\|x\|^2.
		\end{equation*}
		\item [(C3)]The noise sequences $\{\mu_k\}$ and $\{\eta_k\}$ satisfy
		\begin{equation}\label{apd:cond-1}
		\alpha_{k-1}\sum_{t=0}^{k-1}\mu_{t}\rightarrow 0,\quad \frac{1}{\sqrt{k}}\sum_{t=0}^{k-1}\mu_{t}\stackrel{d}{\rightarrow} N(\mathbf{0},\Sigma),
		\end{equation}
		\begin{equation}\label{apd:cond-2}
		\mathbb{E}\left[\|\mu_k\|^2\right]<\infty,\quad \sum_{s\ge -t}^{\infty}\|\mathbb{E}\left[\mu_t\mu_{t+s}^\intercal\right]\|\le c
		\end{equation}
		with $c$ being a constant independent of $t$, and
		\begin{equation}\label{apd:cond-3}
		\mathbb{E}\left[\|\eta_k\|^2\right]=\mathcal{O}(\alpha_{k}^{1+\delta}),
		\end{equation}
		where $\delta$ is specified in (\ref{para}).
		
	\end{itemize}
	Then $\tilde{\Delta}_{k}=\sum_{t=0}^{k-1}\Delta_{t}$ is asymptotic efficient:
	\begin{equation*}
	\sqrt{k}\tilde{\Delta}_{k}\stackrel{d}{\rightarrow} N(\mathbf{0},S),
	\end{equation*}
	where $S=\mathbf{G}^{-1}\Sigma \mathbf{G}^{-1}$.
\end{lem}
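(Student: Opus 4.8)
The plan is to follow the classical Polyak--Ruppert averaging argument for stochastic approximation; this statement is in essence \cite[Theorem 3.4.2]{chen2006stochastic}, so the sketch below only indicates the route. First I would establish stability of the recursion: using the Lyapunov function $V$ from (C1), the linearization of $h$ supplied by (C2), and the noise control of (C3) --- in particular $\alpha_{k-1}\sum_{t=0}^{k-1}\mu_t\to\mathbf{0}$ and $\mathbb{E}[\|\eta_k\|^2]=\mathcal{O}(\alpha_k^{1+\delta})$ --- the ODE method gives $\Delta_k\to\mathbf{0}$ almost surely; then, since $\mathbf{G}$ is stable, a contraction argument near the origin upgrades this to the mean-square rate $\mathbb{E}[\|\Delta_k\|^2]=\mathcal{O}(\alpha_k)$, which is the key quantitative input for everything that follows.

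Next I would linearize and invert the recursion. Writing $h(\Delta_k)=\mathbf{G}\Delta_k+r_k$ with $\|r_k\|\le c_1\|\Delta_k\|^2$, the identity $\Delta_{k+1}-\Delta_k=\alpha_k\mathbf{G}\Delta_k+\alpha_k(\mu_k+\eta_k+r_k)$ and invertibility of $\mathbf{G}$ give $\Delta_k=\mathbf{G}^{-1}\big(\alpha_k^{-1}(\Delta_{k+1}-\Delta_k)-\mu_k-\eta_k-r_k\big)$; summing over $t=0,\dots,k-1$ and dividing by $\sqrt{k}$ yields
\begin{equation*}
\frac{1}{\sqrt{k}}\sum_{t=0}^{k-1}\Delta_t=-\mathbf{G}^{-1}\frac{1}{\sqrt{k}}\sum_{t=0}^{k-1}\mu_t+\mathbf{G}^{-1}\big(E_k^{(1)}-E_k^{(2)}-E_k^{(3)}\big),
\end{equation*}
where $E_k^{(1)}=\frac{1}{\sqrt{k}}\sum_{t=0}^{k-1}\alpha_t^{-1}(\Delta_{t+1}-\Delta_t)$, $E_k^{(2)}=\frac{1}{\sqrt{k}}\sum_{t=0}^{k-1}\eta_t$, and $E_k^{(3)}=\frac{1}{\sqrt{k}}\sum_{t=0}^{k-1}r_t$. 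I would then check that each of these is $o_p(1)$: for $E_k^{(1)}$, summation by parts produces a boundary term of order $(\alpha_k k)^{-1/2}\to0$ (by (C0) and the rate from the first step) plus $-\frac{1}{\sqrt{k}}\sum_{t=1}^{k-1}(\alpha_t^{-1}-\alpha_{t-1}^{-1})\Delta_t$, which vanishes by Kronecker's lemma using $\alpha_t^{-1}-\alpha_{t-1}^{-1}\to0$ and $\mathbb{E}[\|\Delta_t\|]=\mathcal{O}(\sqrt{\alpha_t})$; for $E_k^{(2)}$, $\mathbb{E}[\|E_k^{(2)}\|]\le\frac{1}{\sqrt{k}}\sum_{t=0}^{k-1}\sqrt{\mathbb{E}[\|\eta_t\|^2]}=\frac{1}{\sqrt{k}}\sum_{t=0}^{k-1}\mathcal{O}(\alpha_t^{(1+\delta)/2})\to0$ by the summability in (\ref{para}) and Kronecker's lemma; for $E_k^{(3)}$, $\mathbb{E}[\|E_k^{(3)}\|]\le\frac{c_1}{\sqrt{k}}\sum_{t=0}^{k-1}\mathbb{E}[\|\Delta_t\|^2]=\frac{1}{\sqrt{k}}\sum_{t=0}^{k-1}\mathcal{O}(\alpha_t)\to0$ since $\alpha\in(1/2,1)$.

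Finally I would invoke the CLT hypothesis in (\ref{apd:cond-1}), $\frac{1}{\sqrt{k}}\sum_{t=0}^{k-1}\mu_t\stackrel{d}{\rightarrow}N(\mathbf{0},\Sigma)$, and combine it with the three $o_p(1)$ bounds via Slutsky's theorem to obtain $\frac{1}{\sqrt{k}}\sum_{t=0}^{k-1}\Delta_t\stackrel{d}{\rightarrow}N(\mathbf{0},\mathbf{G}^{-1}\Sigma(\mathbf{G}^{-1})^\intercal)=N(\mathbf{0},S)$. The main obstacle is the first step: extracting the a.s.\ convergence and, above all, the sharp mean-square rate $\mathbb{E}[\|\Delta_k\|^2]=\mathcal{O}(\alpha_k)$ from the hypotheses, which requires combining the Lyapunov estimate with the correlated-noise bound (\ref{apd:cond-2}) on $\{\mu_k\}$ --- a genuine second-moment/mixing estimate rather than a plain martingale bound, precisely the feature that matters when this lemma is later applied to the non-martingale accumulated gradient-tracking noise of $\mathcal{S}$-$\mathcal{AB}$. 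That rate is exactly what kills the boundary term of $E_k^{(1)}$ and the quadratic remainder $E_k^{(3)}$ after averaging, so the entire argument hinges on it.
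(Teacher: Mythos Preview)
The paper does not actually prove this lemma: it is stated in the appendix with the attribution \cite[Theorem 3.4.2]{chen2006stochastic} and no proof is given, so there is nothing to compare against on the paper's side. Your sketch is the standard Polyak--Juditsky/Ruppert averaging argument and is the right route; the decomposition via $\Delta_k=\mathbf{G}^{-1}\big(\alpha_k^{-1}(\Delta_{k+1}-\Delta_k)-\mu_k-\eta_k-r_k\big)$, the Abel summation for $E_k^{(1)}$, and the use of (\ref{para}) together with Kronecker's lemma for $E_k^{(2)}$ are exactly how Chen's proof proceeds. Your identification of the crux --- that the correlated-noise bound (\ref{apd:cond-2}) is what replaces the martingale orthogonality in deriving the rate $\mathbb{E}[\|\Delta_k\|^2]=\mathcal{O}(\alpha_k)$ --- is also correct and is precisely why this form of the lemma (rather than the simpler martingale version in \cite{Polyak1992}) is the one invoked for $\mathcal{S}$-$\mathcal{AB}$.
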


\begin{lem}\label{lem:noi-asym-norm}
	Suppose that Assumptions \ref{ass:objective} and \ref{ass:matrix} hold. Let step-size $\alpha_{k}=a/(k+b)^\alpha$, where $\alpha\in (1/2,1)$, positive scalars $a,b$ satisfy  $\frac{a}{b^\alpha}\le \min\{1,\frac{2n}{u^\intercal vL}\}$, $L\define\max_{1\leq i \leq n}\sqrt{\mathbb{E}[L_{i}^2(\zeta_i)]}$. Then
	\begin{equation}\label{aux-asym-norm}
	\frac{1}{\sqrt{k}}\sum_{t=0}^{k-1}\mu_t\stackrel{d}{\rightarrow} N\left(\mathbf{0},\frac{1}{n^2}\mathbf{S}\right),
	\end{equation}
	where
	$\mathbf{S}=\Cov\left(\sum_{j=1}^n\nabla f_j(x^*;\zeta_j)\right).$
	Moreover, for any $t\ge 0$	there exists a constant $c$ such that
	\begin{equation}\label{ieq-nois-bound-1}
	\sum_{s\ge -t}^{\infty}\left\|\mathbb{E}\left[ \mu_t\mu^\intercal_{s+t}\right]\right\|\le c.
	\end{equation}	
\end{lem}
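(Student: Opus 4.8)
The plan is to reduce the triangular sum $\frac{1}{\sqrt{k}}\sum_{t=0}^{k-1}\mu_t$ to the homogeneously weighted martingale $-\frac{1}{\sqrt{k}}\sum_{t=0}^{k-1}\left(\frac{\mathbf{1}^\intercal}{n}\otimes\mathbf{I}_d\right)\epsilon_t$ through two $L^2$-negligible corrections, and then to invoke a central limit theorem for martingale differences; the bound (\ref{ieq-nois-bound-1}) is a separate, direct computation. First I would recall, exactly as in the proof of Theorem \ref{thm:asym norm}, the identity $\sum_{t=0}^{k-1}\mu_t=-\sum_{t=0}^{k-1}\mathbf{D}_{k-1,t}\epsilon_t$ with $\mathbf{D}_{k-1,t}=\frac{n}{u^\intercal v}\left(\frac{u^\intercal}{n}\otimes\mathbf{I}_d\right)\tilde{\mathbf{B}}^{k-1-t}$ for $t<k-1$. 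Since $\frac{n}{u^\intercal v}\left(\frac{u^\intercal}{n}\otimes\mathbf{I}_d\right)\left(\frac{v\mathbf{1}^\intercal}{n}\otimes\mathbf{I}_d\right)=\frac{\mathbf{1}^\intercal}{n}\otimes\mathbf{I}_d$, the correction matrix $N_t:=\mathbf{D}_{k-1,t}-\left(\frac{\mathbf{1}^\intercal}{n}\otimes\mathbf{I}_d\right)$ equals $\frac{n}{u^\intercal v}\left(\frac{u^\intercal}{n}\otimes\mathbf{I}_d\right)\left(\tilde{\mathbf{B}}^{k-1-t}-\frac{v\mathbf{1}^\intercal}{n}\otimes\mathbf{I}_d\right)$, whose norm decays geometrically in $k-t$ by the contraction estimate already invoked after (\ref{noi-consensus-0}). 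Because $\{\epsilon_t\}$ is a martingale difference sequence, the cross terms vanish and
\[
\mathbb{E}\left[\left\|\frac{1}{\sqrt{k}}\sum_{t=0}^{k-1}N_t\epsilon_t\right\|^2\right]=\frac{1}{k}\sum_{t=0}^{k-1}\mathbb{E}\left[\|N_t\epsilon_t\|^2\right]\le\frac{1}{k}\sum_{t=0}^{k-1}\|N_t\|^2\,\mathbb{E}\left[\|\epsilon_t\|^2\right]=\mathcal{O}(1/k),
\]
using $\mathbb{E}[\|\epsilon_t\|^2]\le nU_1$ from Assumption \ref{ass:objective}(iii) (cf. (\ref{MD-bound})); hence this remainder tends to $\mathbf{0}$ in $L^2$, a fortiori in probability.

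Next I would replace $\epsilon_{i,t}=\nabla g_i(x_{i,t};\zeta_{i,t})-\nabla f_i(x_{i,t})$ by $\epsilon_{i,t}^{*}:=\nabla g_i(x^*;\zeta_{i,t})-\nabla f_i(x^*)$: the difference sequence is again a martingale difference (each term has conditional mean zero given $\mathcal{F}_t$), and by Assumption \ref{ass:objective}(iii) together with the variance-reduction inequality $\mathbb{E}[\|Z-\mathbb{E}[Z\mid\mathcal{F}_t]\|^2\mid\mathcal{F}_t]\le\mathbb{E}[\|Z\|^2\mid\mathcal{F}_t]$, its conditional second moment at step $t$ is at most $\frac{L^2}{n}\sum_{i=1}^n\|x_{i,t}-x^*\|^2$; by Lemma \ref{lem:rate} and Theorem \ref{thm:rate} this has expectation $\mathcal{O}(\alpha_t)$, so the corresponding averaged error has $L^2$ norm $\frac{1}{k}\sum_{t=0}^{k-1}\mathcal{O}(\alpha_t)=\mathcal{O}(k^{-\alpha})\to0$. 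After these two reductions it remains to prove $-\frac{1}{\sqrt{k}}\sum_{t=0}^{k-1}w_t\stackrel{d}{\rightarrow}N\left(\mathbf{0},\frac{1}{n^2}\mathbf{S}\right)$ for $w_t:=\frac{1}{n}\sum_{i=1}^n\epsilon_{i,t}^{*}$. Here $\{w_t\}$ is a martingale difference sequence with respect to $\{\mathcal{F}_{t+1}\}$ with deterministic conditional covariance $\mathbb{E}[w_tw_t^\intercal\mid\mathcal{F}_t]=\frac{1}{n^2}\Cov\left(\sum_{j=1}^n\nabla g_j(x^*;\zeta_j)\right)=\frac{1}{n^2}\mathbf{S}$ (since $x^*$ is deterministic), and $\sup_t\mathbb{E}[\|w_t\|^p]<\infty$ for the $p>2$ of Assumption \ref{ass:objective}(iii), so the Lyapunov, hence Lindeberg, condition holds; a martingale central limit theorem (equivalently, since the samples are i.i.d. in $t$, the classical multivariate CLT) then gives $\frac{1}{\sqrt{k}}\sum_{t=0}^{k-1}w_t\stackrel{d}{\rightarrow}N\left(\mathbf{0},\frac{1}{n^2}\mathbf{S}\right)$, and Slutsky's theorem together with the two negligible corrections yields (\ref{aux-asym-norm}) (the limiting law is symmetric, so the overall sign is immaterial).

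For (\ref{ieq-nois-bound-1}) I would expand $\mu_t=-\sum_{r=0}^{t}P_{t,r}\epsilon_r$ with $P_{t,r}:=\frac{n}{u^\intercal v}\left(\frac{u^\intercal}{n}\otimes\mathbf{I}_d\right)\tilde{\mathbf{B}}(t,r)$, so that $\|P_{t,r}\|\le\frac{\|u\|}{u^\intercal v}\|\tilde{\mathbf{B}}(t,r)\|\le\frac{\|u\|c_b}{u^\intercal v}\tau_\mathbf{B}^{t-r}$ by the geometric bound on $\|\tilde{\mathbf{B}}(t,r)\|_\mathbf{B}$ established in the proof of Lemma \ref{lem:noi-bound} and the norm equivalence (\ref{norm-bound-1}). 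Since $\{\epsilon_r\}$ is a martingale difference sequence, $\mathbb{E}[\epsilon_r\epsilon_{r'}^\intercal]=\mathbf{0}$ for $r\ne r'$, whence for every $s$
\[
\mathbb{E}\left[\mu_t\mu_{t+s}^\intercal\right]=\sum_{r=0}^{\min(t,t+s)}P_{t,r}\,\mathbb{E}\left[\epsilon_r\epsilon_r^\intercal\right]P_{t+s,r}^\intercal,
\]
and using $\|\mathbb{E}[\epsilon_r\epsilon_r^\intercal]\|\le nU_1$ and summing the geometric series in $r$ gives $\|\mathbb{E}[\mu_t\mu_{t+s}^\intercal]\|\le C\tau_\mathbf{B}^{|s|}$ with $C$ independent of $t$; then $\sum_{s\ge-t}^{\infty}C\tau_\mathbf{B}^{|s|}\le C\frac{1+\tau_\mathbf{B}}{1-\tau_\mathbf{B}}=:c$, which is the claim.

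The main obstacle is the first reduction. Because the accumulated gradient-tracking noise $\xi_t$ is an ARMA-type, not a martingale, process, the coefficients $\mathbf{D}_{k-1,t}$ appearing in $\sum_t\mu_t$ depend on both the horizon $k$ and the index $t$, so $\sum_t\mu_t$ is not itself a martingale and the CLT cannot be applied to it directly; one must pass to the stationary proxy $\sum_t\left(\frac{\mathbf{1}^\intercal}{n}\otimes\mathbf{I}_d\right)\epsilon_t$ and control the $L^2$ error of this approximation uniformly over a triangular array whose size grows with $k$. This is precisely where the geometric contraction of $\tilde{\mathbf{B}}$ toward $\frac{v\mathbf{1}^\intercal}{n}\otimes\mathbf{I}_d$ in the $\|\cdot\|_\mathbf{B}$ norm, combined with the $\mathcal{O}(\alpha_t)$ decay of $\mathbb{E}[\|x_{i,t}-x^*\|^2]$ from Lemma \ref{lem:rate} and Theorem \ref{thm:rate}, does the work.
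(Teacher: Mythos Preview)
Your proposal is correct and follows essentially the same route as the paper: the same two $L^2$-negligible reductions (replacing $\mathbf{D}_{k-1,t}$ by $\frac{\mathbf{1}^\intercal}{n}\otimes\mathbf{I}_d$ via the geometric contraction of $\tilde{\mathbf{B}}$ toward $\frac{v\mathbf{1}^\intercal}{n}\otimes\mathbf{I}_d$, then replacing $\epsilon_t$ by $\epsilon_t^*$ via Lemma~\ref{lem:rate} and Theorem~\ref{thm:rate}), followed by a martingale CLT for $\bar\epsilon_t^*$, and the same $\tilde{\mathbf{B}}(t,r)$-expansion with geometric bounds for (\ref{ieq-nois-bound-1}). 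The only cosmetic differences are that the paper bundles the two reductions into a single estimate (equation (\ref{eq-prop})) and verifies the conditions of \cite[Lemma~3.3.1]{chen2006stochastic} explicitly, whereas you separate the reductions and note (correctly) that since the $w_t$ are actually i.i.d.\ in $t$, the classical CLT already suffices.
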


\begin{proof}
	In what follows, we show (\ref{aux-asym-norm}) by \cite[Lemma 3.3.1]{chen2006stochastic}. Recall that
	{\small\begin{equation*}
		\frac{1}{\sqrt{k}}\sum_{t=0}^{k-1}\mu_{t}=-\frac{1}{\sqrt{k}}\sum_{t=0}^{k-1}\frac{n}{u^\intercal v}\left(\frac{u^\intercal}{n}\otimes\mathbf{I}_{d}\right)\xi_t=-\frac{1}{\sqrt{k}}\sum_{t=0}^{k-1}\mathbf{D}_{k-1,t}\epsilon_t,
		\end{equation*}}
	where $\mathbf{D}_{k-1,t}\define\frac{n}{u^\intercal v}\left(\frac{u^\intercal}{n}\otimes\mathbf{I}_{d}\right)\tilde{\mathbf{B}}^{k-1-t}.$ 
	Define
	{\small\begin{align*}
		\epsilon_t^*&=\left[\left(\nabla g_1(x^*;\zeta_{1,t})-\nabla f_1(x^*)\right)^\intercal,\left(\nabla g_2(x^*;\zeta_{2,t})-\nabla f_2(x^*)\right)^\intercal,\right.\\
		&\quad\quad\left.\cdots,\left(\nabla g_n(x^*;\zeta_{n,t})-\nabla f_n(x^*)\right)^\intercal\right]^\intercal.
		\end{align*}}
	It is easy to verify that $\{\epsilon_t-\epsilon_t^*\}$ is martingale difference sequence and 	
	{\footnotesize\begin{equation}\label{eq-prop}
		\begin{aligned}
		&\mathbb{E}\left[\left\|\frac{1}{\sqrt{k}}\sum_{t=0}^{k-1}\mathbf{D}_{k-1,t}\epsilon_t-\frac{1}{\sqrt{k}}\sum_{t=0}^{k-1}\left(\frac{\mathbf{1}^\intercal}{n}\otimes \mathbf{I}_d\right)\epsilon_t^*\right\|^2\right]\\
		&\le
		2\mathbb{E}\left[\left\|\frac{1}{\sqrt{k}}\sum_{t=0}^{k-1}\mathbf{D}_{k-1,t}\epsilon_t-\frac{1}{\sqrt{k}}\sum_{t=0}^{k-1}\left(\frac{\mathbf{1}^\intercal}{n}\otimes \mathbf{I}_d\right)\epsilon_t\right\|^2\right]\\
		&\quad+2\mathbb{E}\left[\left\|\frac{1}{\sqrt{k}}\sum_{t=0}^{k-1}\left(\frac{\mathbf{1}^\intercal}{n}\otimes \mathbf{I}_d\right)(\epsilon_t-\epsilon_t^*)\right\|^2\right]\\
		&\le
		2\left(\frac{\|u\|\X}{u^\intercal v}\right)^2\frac{1}{1-\tau_\Z^2}U_1\frac{1}{k}+2\mathbb{E}\left[\left\|\frac{1}{\sqrt{k}}\sum_{t=0}^{k-1}\left(\frac{\mathbf{1}^\intercal}{n}\otimes \mathbf{I}_d\right)(\epsilon_t-\epsilon_t^*)\right\|^2\right]\\
		&\le \left(\frac{\|u\|\X}{u^\intercal v}\right)^2\frac{1}{1-\tau_\Z^2}U_1\frac{1}{k}+\frac{2}{k}\sum_{t=0}^{k-1}\mathcal{O}\left(\alpha_t+\alpha_t^2\right)\rightarrow 0,
		\end{aligned}
		\end{equation}}
	where the second inequality follows from the similar analysis of (\ref{noi-consensus}), the third inequality follows from  the fact
	{\small$$\mathbb{E}\left[\|\epsilon_t-\epsilon_t^*\|^2\right]\le 4L^2\mathbb{E}\left[\|x_t-\mathbf{1}\otimes\bar{x}_t\|^2\right]+4nL^2\mathbb{E}\left[\|\bar{x}_t-x^*\|^2\right],$$}
	Lemma \ref{lem:rate} and Theorem \ref{thm:rate}. Define $\bar{\epsilon}_t^*\define1/n\sum_{j=1}^n\epsilon_{j,t}^*$, where $\epsilon_{j,t}^*\define \nabla g_j(x^*;\zeta_{j,t})-\nabla f_j(x^*)$.
	Then by (\ref{eq-prop}) and Slutsky's theorem, (\ref{aux-asym-norm}) is equivalent to
	\begin{equation*}
	\frac{1}{\sqrt{k}}\sum_{t=0}^{k-1}\left(\frac{\mathbf{1}^\intercal}{n}\otimes \mathbf{I}_d\right)\epsilon_t^*=\frac{1}{\sqrt{k}}\sum_{t=0}^{k-1}\bar{\epsilon}_t^*\stackrel{d}{\rightarrow} N\left(\mathbf{0},\frac{1}{n^2}\mathbf{S}\right).
	\end{equation*}
	Denote
	\begin{equation*}
	\xi_{k,t}=\frac{\bar{\epsilon}_t^*}{\sqrt{k}},\quad
	\mathbf{S}_{k,t}=\mathbb{E}\left[\xi_{k,t}\xi_{k,t}^\intercal\right]
	\end{equation*}
	and 
	$$\mathbf{R}_{k,t}=\mathbb{E}\left[\xi_{k,t}\xi_{k,t}^\intercal\bigg|\xi_{k,0},\cdots,\xi_{k,t-1}\right],\quad \mathbf{S}_{k}=\sum_{t=0}^{k-1}\mathbf{S}_{k,t}.$$
	Then Lemma \ref{lem:noi-asym-norm} falls into the setting of \cite[Lemma 3.3.1]{chen2006stochastic}. We just need to verify the conditions of \cite[Lemma 3.3.1]{chen2006stochastic}.
	
	Since $\{\bar{\epsilon}_t^*\}$ is a martingale difference sequence,
	\begin{equation*}
	\mathbb{E}\left[\xi_{k,t}\bigg|\xi_{k,0},\cdots,\xi_{k,t-1}\right]=0,
	\end{equation*}
	which implies the condition (3.3.1) of \cite[Lemma 3.3.1]{chen2006stochastic}.
	
	Next, we verify the conditions (3.3.2)-(3.3.3) of \cite[Lemma 3.3.1]{chen2006stochastic}. By the definition of $\xi_{k,t}$,
	\begin{equation}\label{b-0}
	\begin{aligned}
	\mathbb{E}\left[\left\|\xi_{k,t}\right\|^p\right]&=\mathbb{E}\left[\left\|\frac{\frac{1}{n}\sum_{j=1}^n\nabla g_j(x^*;\zeta_{j,t})}{\sqrt{k}}\right\|^p\right]\\
	&\le\frac{\sum_{j=1}^n\mathbb{E}\left[\|\nabla g_j(x^*;\zeta_{j,t})\|^p\right]}{nk^{p/2}}
	\le\frac{c_f^{p/2}}{k^{p/2}},
	\end{aligned}
	\end{equation}
	where $c_f=\left(1/n\sum_{j=1}^n\mathbb{E}\left[\|\nabla g_j(x^*;\zeta_{j,t})\|^p\right]\right)^{2/p}<\infty$. Then,
	{\small\begin{equation}\label{ieq-noise-bound 1}
		\sup_{k\ge 1}\sum_{t=0}^{k-1}\mathbb{E}\left[\left\|\xi_{k,t}\right\|^2\right]\le \sup_{k\ge 1}\sum_{t=0}^{k-1}\left(\mathbb{E}\left[\left\|\xi_{k,t}\right\|^p\right]\right)^{2/p}\le\sup_{k\ge 1}\frac{kc_f}{k}=c_f.
		\end{equation}
	}

	Note that $\mathbf{S}_{k,t}=\frac{1}{k}\frac{1}{n^2}\Cov\left(\sum_{j=1}^n\nabla g_j(x^*;\zeta_{j})\right)=\frac{1}{k}\frac{1}{n^2}\mathbf{S}$,
	\begin{equation}\label{limit-matrix}
	\mathbf{S}_{k}=\sum_{t=0}^{k-1}\mathbf{S}_{k,t}=\frac{1}{n^2}\mathbf{S}.
	\end{equation}
	(\ref{ieq-noise-bound 1}) and (\ref{limit-matrix}) imply the condition (3.3.2) of \cite[Lemma 3.3.1]{chen2006stochastic}. Moreover, the fact $\mathbf{R}_{k,t}=\mathbf{S}_{k,t}$ almost surely implies the condition (3.3.3) of \cite[Lemma 3.3.1]{chen2006stochastic} directly.
	
	It is left to verify the condition (3.3.4) of \cite[Lemma 3.3.1]{chen2006stochastic}. For any $\delta>0$, by the H\"{o}lder inequality,
	\begin{equation*}
	\begin{aligned}
	&\mathbb{E}\left[\left\|\xi_{k,t}\right\|^21_{\{\left\|\xi_{k,t}\right\|\ge \delta \}}\right]\\
	&\le\left(\mathbb{E}\left[\left\|\xi_{k,t}\right\|^{2(p/2)}\right]\right)^{2/p}\left(\mathbb{E}\left[1^q_{\{\left\|\xi_{k,t}\right\|>\delta\}}\right]\right)^{1/q}\\
	&=\left(\mathbb{E}\left[\left\|\xi_{k,t}\right\|^{p}\right]\right)^{2/p}\P^{1/q}\left(\left\|\xi_{k,t}\right\|>\delta\right)\\
	&\le \left(\mathbb{E}\left[\left\|\xi_{k,t}\right\|^{p}\right]\right)^{2/p}\left(\frac{\mathbb{E}\left[\left\|\xi_{k,t}\right\|\right]}{\delta}\right)^{1/q}\\
	&\le \frac{c^{1+1/(2q)}_f}{k^{1+1/(2q)}\delta^{1/q}},
	\end{aligned}
	\end{equation*}
	where $1_{\mathcal{X}}$ denotes the characteristic function of set $\mathcal{X}$ , which
		means that it equals 1 if $x\in\mathcal{X}$, and 0 otherwie, $p$ is defined in Assumption \ref{ass:objective}, $q$ is the constant satisfying $2/p+1/q=1$, the second inequality follows from Markov inequality, the third inequality follows from (\ref{b-0}). Then
	\begin{equation*}
	\lim_{k\rightarrow\infty}\sum_{t=1}^k\mathbb{E}\left[\left\|\xi_{k,t}\right\|^21_{\{\left\|\xi_{k,t}\right\|\ge \delta \}}\right]\le \lim_{k\rightarrow\infty}
	\frac{kc^{1+1/(2q)}_f}{k^{1+1/(2q)}\delta^{1/q}}=0,
	\end{equation*}
	which implies the condition (3.3.4) of \cite[Lemma 3.3.1]{chen2006stochastic}.
	
	Summarizing above, all the conditions of \cite[Lemma 3.3.1]{chen2006stochastic} hold, then
	\begin{equation*}
	\frac{1}{\sqrt{k}}\sum_{t=0}^{k-1}\mu_{t}^{(0)}\stackrel{d}{\rightarrow} N\left(\mathbf{0},\frac{1}{n^2}\mathbf{S}\right).
	\end{equation*}
	
	In what follows, we show (\ref{ieq-nois-bound-1}). Note that
	{\small\begin{equation*}
		\begin{aligned}
		&\mathbb{E}\left[ \mu_{t} \mu_{s+t}^\intercal\right]\\
		&=\mathbb{E}\left[ \frac{n}{u^\intercal v}\left(\frac{u^\intercal}{n}\otimes\mathbf{I}_{d}\right)\xi_{t}\left( \frac{n}{u^\intercal v}\left(\frac{u^\intercal}{n}\otimes\mathbf{I}_{d}\right)\xi_{s+t}\right)^\intercal\right]\\
		&=\left(\frac{n}{u^\intercal v}\right)^2\left(\frac{u^\intercal}{n}\otimes\mathbf{I}_{d}\right)\mathbb{E}\left[ \left(\sum_{l=0}^{t}\tilde{\mathbf{B}}(t,l)\epsilon_l\right)\right.\\ &\quad\left.\left(\sum_{l=0}^{s+t}\tilde{\mathbf{B}}(s+t,l)\epsilon_l\right)^\intercal\right]\left(\frac{u^\intercal}{n}\otimes\mathbf{I}_{d}\right)^\intercal\\
		&=\left(\frac{n}{u^\intercal v}\right)^2\sum_{l=0}^{t\wedge(s+t)}\left(\frac{u^\intercal}{n}\otimes\mathbf{I}_{d}\right)\tilde{\mathbf{B}}(t,l)\mathbb{E}\left[\epsilon_{l}\epsilon_{l}^\intercal \right]\tilde{\mathbf{B}}(s+t,l)^\intercal\left(\frac{u^\intercal}{n}\otimes\mathbf{I}_{d}\right)^\intercal,
		\end{aligned}
		\end{equation*}}
	where the third equality follows from the property of martingale difference $\{\epsilon_{t}\}$, notation $a\wedge b\define\min\{a,b\}$, $\tilde{\mathbf{B}}(t,l)$ is defined in (\ref{mat-B}). Then we have
	\begin{equation*}
	\begin{aligned}
	&\left\|\mathbb{E}\left[ \mu_{t} \mu_{s+t}^\intercal\right]\right\|\\
	&\le \left(\frac{n}{u^\intercal v}\right)^2\sum_{l=0}^{t\wedge(s+t)}\left\|\left(\frac{u^\intercal}{n}\otimes\mathbf{I}_{d}\right)\tilde{\mathbf{B}}(t,l)\mathbb{E}\left[\epsilon_{l}\epsilon_{l}^\intercal \right]\right.\\
	&\quad\left.\tilde{\mathbf{B}}(s+t,l)^\intercal\left(\frac{u^\intercal}{n}\otimes\mathbf{I}_{d}\right)^\intercal\right\|\\
	&\le \left(\frac{\|u\|}{u^\intercal v}\right)^2\X^2\sum_{l=0}^{t\wedge(s+t)}\left\|\tilde{\mathbf{B}}(t,l)\right\|_\Z\left\|\tilde{\mathbf{B}}(s+t,l)\right\|_\Z\mathbb{E}\left[\left\|\epsilon_{l} \right\|_\Z^2\right]\\
	&\le \left(\frac{\|u\|}{u^\intercal v}\right)^2c_b^2\sum_{l=0}^{t\wedge(s+t)}\tau_\Z^{s+2t-2l}\mathbb{E}\left[\left\|\epsilon_{l} \right\|^2\right]\\
	&\le \left(\frac{\|u\|}{u^\intercal v}\right)^2c_b^2\sum_{l=0}^{t\wedge(s+t)}\tau_\Z^{s+2t-2l}nU_1,
	\end{aligned}
	\end{equation*}
	where $c_b=\max\left\{\X^2,\frac{\left\|\mathbf{B}-\mathbf{I}_{n}\right\|}{\tau_\Z}\X^2\right\}$, the third inequality follows from (\ref{mat-dimish}), the last inequality follows from Assumption \ref{as-conv-1} (iii).
	Then
	{\small\begin{equation*}
		\begin{aligned}
		\sum_{s\ge -t}^{\infty}\left\|\mathbb{E}\left[ \mu_{t} \mu_{s+t}^\intercal\right]\right\|
		&=\sum_{l=0}^{t}\left\|\mathbb{E}\left[ \mu_{t} \mu_{l}^\intercal\right]\right\|+\sum_{l>t}^{\infty}\left\|\mathbb{E}\left[ \mu_{t} \mu_{l}^\intercal\right]\right\|\\
		&\le \left(\frac{\|u\|}{u^\intercal v}\right)^2 c_b^2nU_1\sum_{l=0}^{t}\sum_{r=0}^{l}\tau_\Z^{l+t-2r}\\
		&\quad+\left(\frac{\|u\|}{u^\intercal v}\right)^2 c_b^2nU_1\sum_{l>t}^{\infty}\sum_{r=0}^{t}\tau_\Z^{t+l-2r} \\
		&\le \left(\frac{\|u\|}{u^\intercal v}\right)^2 c_b^2nU_1\left(\sum_{l=0}^{t}\frac{\tau_\Z^{t-l}}{1-\tau_\Z}+\sum_{l>t}^{\infty}\frac{\tau_\Z^{l-t}}{1-\tau_\Z}\right)\\
		&\le \left(\frac{\|u\|}{u^\intercal v}\right)^2 c_b^2nU_1\frac{1+\tau_\Z}{\left(1-\tau_\Z\right)^2},
		\end{aligned}
		\end{equation*}}
	which implies (\ref{ieq-nois-bound-1}). The proof is complete.
\end{proof}

% Can use something like this to put references on a page
% by themselves when using endfloat and the captionsoff option.
\ifCLASSOPTIONcaptionsoff
  \newpage
\fi

% trigger a \newpage just before the given reference
% number - used to balance the columns on the last page
% adjust value as needed - may need to be readjusted if
% the document is modified later
%\IEEEtriggeratref{8}
% The "triggered" command can be changed if desired:
%\IEEEtriggercmd{\enlargethispage{-5in}}

% references section

% can use a bibliography generated by BibTeX as a .bbl file
% BibTeX documentation can be easily obtained at:
% http://mirror.ctan.org/biblio/bibtex/contrib/doc/
% The IEEEtran BibTeX style support page is at:
% http://www.michaelshell.org/tex/ieeetran/bibtex/
%\bibliographystyle{IEEEtran}
% argument is your BibTeX string definitions and bibliography database(s)
%\bibliography{IEEEabrv,../bib/paper}
%
% <OR> manually copy in the resultant .bbl file
% set second argument of \begin to the number of references
% (used to reserve space for the reference number labels box)

% You can push biographies down or up by placing
% a \vfill before or after them. The appropriate
% use of \vfill depends on what kind of text is
% on the last page and whether or not the columns
% are being equalized.

% Can be used to pull up biographies so that the bottom of the last one
% is flush with the other column.
%\enlargethispage{-5in}

% that's all folks
\end{document}